\newcolumntype{L}[1]{>{\raggedright\let\newline\\\arraybackslash\hspace{0pt}}m{#1}}
\newcolumntype{C}[1]{>{\centering\let\newline\\\arraybackslash\hspace{0pt}}m{#1}}
\newcolumntype{R}[1]{>{\raggedleft\let\newline\\\arraybackslash\hspace{0pt}}m{#1}}
\newtheoremstyle{theoremstyle}
{10pt}      
{5pt}       
{\itshape}  
{}          
{\bfseries} 
{}         
{ }        
{}          
\newtheoremstyle{algorithmstyle}
{10pt}      
{5pt}       
{}  
{}          
{\bfseries} 
{}         
{ }      
{}          
\newtheoremstyle{examplestyle}
{10pt}      
{5pt}       
{}          
{}          
{\bfseries} 
{}         
{ }      
{}          
\theoremstyle{theoremstyle}
\newtheorem{theorem}{Theorem}[section]
\newtheorem*{theorem*}{Theorem}
\newtheorem{lemma}[theorem]{Lemma}
\newtheorem{proposition}[theorem]{Proposition}
\newtheorem*{proposition*}{Proposition}
\newtheorem*{corollary*}{Corollary}
\theoremstyle{examplestyle}
\newtheorem{definition}[theorem]{Definition}
\newtheorem{definition*}{Definition}
\newtheorem{remark}[theorem]{Remark}
\newtheorem{remark*}{Remark}
\theoremstyle{algorithmstyle}
\newcommand{\RR }{\mathbb{R}}
\newcommand{\ZZ }{\mathbb{Z}}
\newcommand{\PP }{\mathbb{P}}
\newcommand{\suchthat}{\;\ifnum\currentgrouptype=16 \middle\fi|\;}
\def\rh{{rh}}
\def\nrh{{non-rh}}
\DeclareMathOperator{\valuation}{val}
\DeclareMathOperator{\Trop}{Trop}
\DeclareMathOperator{\trop}{trop}
\DeclareMathOperator{\ft}{ft}
\DeclareMathOperator{\hyp}{hyp}
\DeclareMathOperator{\an}{an}
\DeclareMathOperator{\conv}{conv}
\renewcommand{\emph}[1]{\textit{\textcolor{red}{#1}}}
\DeclareRobustCommand{\tnullnullnull}[1]
{%
  \begin{tikzpicture}[scale=0.75,#1]
    \draw (0,0) -- (1.16ex,2ex) -- (2.32ex,0) -- cycle
    (0,0) -- (1.16ex,0.58ex)
    (2.32ex,0) -- (1.16ex,0.58ex)
    (1.16ex,2ex) -- (1.16ex,0.58ex);
  \end{tikzpicture}
}
\DeclareRobustCommand{\tnulltwonull}[1]
{%
  \begin{tikzpicture}[scale=0.75,#1]
    \draw (0,0) to[out=130,in=230] (0,2ex)
    (0,0) to[out=50,in=310] (0,2ex);
    \draw (3ex,0) to[out=130,in=230] (3ex,2ex)
    (3ex,0) to[out=50,in=310] (3ex,2ex);
    \draw (0,0) -- (3ex,0)
    (0,2ex) -- (3ex,2ex);
  \end{tikzpicture}
}
\DeclareRobustCommand{\toneoneone}[1]
{%
  \begin{tikzpicture}[scale=0.75,#1]
    \draw (0,0) to[out=130,in=230] (0,2ex)
    (0,0) to[out=50,in=310] (0,2ex);
    \draw (0,2ex) -- (2ex,1ex)
    (0,0) -- (2ex,1ex)
    (2ex,1ex) -- (3ex,1ex);
    \draw (3.5ex,1ex) circle (0.5ex);
  \end{tikzpicture}
}
\DeclareRobustCommand{\ttwoonetwo}[1]
{%
  \begin{tikzpicture}[scale=0.75,#1]
    \draw (0,0) circle (0.5ex);
    \draw (0.5ex,0) -- (2ex,0);
    \draw (2ex,0) to[out=80,in=100,min distance=1ex] (3.5ex,0);
    \draw (2ex,0) to[out=280,in=260,min distance=1ex] (3.5ex,0);
    \draw (3.5ex,0) -- (5ex,0);
    \draw (5.5ex,0) circle (0.5ex);
  \end{tikzpicture}
}
\DeclareRobustCommand{\tthreenullthree}[1]
{%
  \begin{tikzpicture}[scale=0.75]
    \draw (0,0) -- (30:0.6ex);
    \draw (0,0) -- (150:0.6ex);
    \draw (0,0) -- (270:0.6ex);
    \draw (30:0.9ex) circle (0.3ex);
    \draw (150:0.9ex) circle (0.3ex);
    \draw (270:0.9ex) circle (0.3ex);
  \end{tikzpicture}
}
\definecolor{edgeRed}{RGB}{255,0,0}
\definecolor{edgeOrange}{RGB}{255,148,0}
\definecolor{edgeYellow}{RGB}{253,229,65}
\definecolor{edgeGreen}{RGB}{15,173,0}
\definecolor{edgeBlue}{RGB}{0,100,181}
\definecolor{edgeViolet}{RGB}{99,0,165}
\begin{document}

\title[Tropicalized quartics and tropical curves of genus $3$]{Tropicalized quartics and canonical embeddings for tropical curves of genus $3$}
\author{Marvin Anas Hahn}
\address{Marvin Anas Hahn, Eberhard Karls Universit\"at T\"ubingen, Germany}
\email{marvin-anas.hahn@uni-tuebingen.de}
\author{Hannah Markwig}
\address{Hannah Markwig, Eberhard Karls Universit\"at T\"ubingen, Germany}
\email{hannah@math.uni-tuebingen.de}
\author{Yue Ren}
\address{Yue Ren, Max-Planck-Institut MIS, Leipzig, Germany}
\email{yueren@mis.mpg.de}
\author{Ilya Tyomkin}
\address{Ilya Tyomkin, Ben-Gurion University of the Negev, Israel}
\email{tyomkin@math.bgu.ac.il}

\thanks{The first and second authors were partially supported by the DFG TRR 195 (INST 248/235-1). The fourth author was supported by the Israel Science Foundation (grant No. 821/16). This work was completed during the program ``Tropical Geometry, Amoebas and Polytopes'' at the Institute Mittag-Leffler in spring 2018. The authors would like to thank the institute for its hospitality}

\keywords{Tropical geometry, moduli spaces of plane curves, tropical divisors and linear systems}
\subjclass[2010]{14T05, 14G22, 14C20}

\begin{abstract}
  In \cite{BJMS14}, it was shown that not all abstract non-hyperelliptic tropical curves of genus $3$ can be realized as a tropicalization of a quartic in $\mathbb R^2$.
  In this paper, we focus on the interior of the maximal cones in the moduli space and classify all curves which can be realized as a faithful tropicalization in a tropical plane. Reflecting the algebro-geometric world, we show that these are all curves but the tropicalizations of realizably hyperelliptic algebraic curves.

  Our approach is constructive: For a curve which is not the tropicalization of a hyperelliptic algebraic curve, we explicitly construct a realizable model of the tropical plane in $\mathbb{R}^n$, and a faithfully tropicalized quartic in it. These constructions rely on modifications resp. tropical refinements.
  Conversely, we prove that the tropicalizations of hyperelliptic algebraic curves cannot be embedded in such a fashion. For that, we rely on the theory of tropical divisors and embeddings from linear systems \cite{Amini, HMY}, and recent advances in the realizability of sections of the tropical canonical divisor \cite{MUW}.
\end{abstract}
\maketitle
\vspace{-5mm}  
\setcounter{tocdepth}{1}
\tableofcontents

\vspace{-10mm} 
\section{Introduction}

\subsection{Background and context}
Tropical geometry can be viewed as a piecewise linear shadow of algebraic geometry. In good cases, many geometric properties of an algebraic variety are encoded in its tropicalization as long as the tropicalization is sufficiently ``fine". A problem with this philosophy is that the naive tropicalization depends on the embedding of the algebraic variety, and it is not always clear how to choose the right embedding to work with.

Berkovich analytic geometry offers a way to overcome this problem: Berkovich analytification, which can be viewed as the inverse limit of all tropicalization \cite{Pay09}, does not depend on an embedding and contains complete geometric information about the algebraic variety. The price to pay is that Berkovich analytic spaces are complicated objects, while in tropical geometry we hope to replace an algebraic variety with a simpler object in order to study the algebraic variety by means of combinatorics. A \textit{faithful tropicalization} can be viewed as a hands-on compromise between a naive tropicalization and Berkovich analytification. For curves, it offers a way to avoid infinite graphs by working with a concrete tropicalization, which is combinatorially much easier to digest. On the other hand, it is a tropicalization which captures the wanted geometric properties.

In applications of tropical geometry to algebraic geometry, the question how to capture geometric properties in a tropicalization is foundational, and consequently the question how to construct faithful tropicalizations has attracted lots of attention in the recent years \cite{CHW, CM14, CM17, DraismaPostinghel, KY, Wag17}. In particular, modifications and re-embeddings have already been used to construct faithful tropicalizations \cite{CM14, CM17, LM17}. The tropicalization of a re-embedded variety on a modification is sometimes called a \textit{tropical refinement} \cite{IMS09, MMS09}.

\vspace{-2mm}
\subsection{The question}
It is well-known that any smooth projective non-hyperelliptic curve of genus $3$ can be embedded as a smooth quartic in the plane. Moreover the embedding is given by the complete canonical linear system, and any smooth plane quartic is obtained this way.

In \cite{BJMS14}, Brodsky, Joswig, Morrison and Sturmfels observed that the naive tropical analogue of the above statement does not hold. Namely, not any non-hyperelliptic curve of genus $3$ admits an embedding as a tropical quartic in $\mathbb{R}^2$. More precisely, they computed the locus of embeddable tropical curves, which is far from being the union of all open top-dimensional cones in $M_3^{\trop}\setminus M_{\hyp}^{\trop}$ (see \cite[Theorem~5.1]{BJMS14}). 
With this computational project --- which offers interesting perspectives on computations with secondary fans and tropical moduli spaces beyond this result --- they thus uncovered a discrepancy between the algebraic and tropical world, which as commonly suspected arises due to the use of naive tropicalization of plane curves.

In this paper, we propose a natural setting in which the tropical analogue of the above statement does hold true. Before presenting our setting, let us mention that it is not very difficult to prove (non-constructively) that {\em any} (even hyperelliptic) tropical curve $\Gamma$ of genus 3 can be realized faithfully as a quartic in some tropical model of the plane. To do so, one first proves the existence of a non-hyperelliptic curve $C$ of genus 3 tropicalizing to $\Gamma$ such that $C$ admits an embedding in some $\PP^n$ with faithfull tropicalization (e.g., one can embed $\Gamma$ non-superabundantly in $\RR^3$ and construct $C$ using basic deformation theory, cf. \cite{Tyo12}). Second, one embeds ${\mathcal O}_{\PP^n}(1)|_C$ in $\omega_C^{\otimes m}$ for $m$ large enough to construct a pluricanonical embedding of $C$ with faithful tropicalization. Finally, one observes that the $m$-pluricanonical image of $C$ belongs to the $m$-Veronese embedding of the projective plane. Its tropicalization is the desired model of the plane. However, since abstract tropical models of the plane are very complicated such a tautological tropical analogue is not very satisfactory.

In this paper we consider the class of tropical planes, which are tropicalizations of linear subplanes of projective spaces. To motivate the choice of the class, recall that on the algebraic side, any smooth genus $3$ quartic $C\subset \PP^n$ is embedded by a tuple of canonical sections, hence factors through a planar embedding followed by a linear embedding of $\PP^2$ into $\PP^n$. On the tropical side, it is natural to ask: {\em What tropical curves $\Gamma\in M_3^{\trop}$ admit faithful embeddings in $\RR^n$ as tropical quartics?} Notice that if $\Gamma\hookrightarrow\RR^n$ is such an embedding and $C\hookrightarrow\PP^n$ its faithful realization then $C$ is a planar quartic in $\PP^n$ embedded via a tuple of canonical sections, hence $\Gamma$ sits in the tropicalization of the corresponding plane and the embedding $\Gamma\hookrightarrow\RR^n$ is given by (simultaneously) realizable sections of the tropical canonical divisor.

\subsection{The results}
To formulate our main results we need the following notions: an (embedded) tropical curve is called {\em realizably hyperelliptic} (or {\em rh}) if it is the (embedded) tropicalization of an algebraic hyperelliptic curve and is called {\em {\nrh} curve} otherwise (see Definition \ref{def-hyper}). A tropical curve is called {\em maximal} if it belongs to the open interior of a cone of maximal dimension in the moduli space of tropical curves (see Definition \ref{def-maximal}). 

\begin{theorem}\label{thm-main}
Any maximal {\nrh} tropical curve of genus $3$ can be embedded as a faithfully tropicalized quartic in a linear modification of the tropical plane $\RR^2$ (after attaching unbounded edges appropriately).
\end{theorem}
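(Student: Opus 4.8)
The plan is to reduce the statement to an explicit case analysis over the finitely many combinatorial types of maximal genus $3$ tropical curves, and for each non-rh type to construct the required modification by hand. First I would recall that the maximal cones of $M_3^{\trop}$ are indexed by the trivalent genus $3$ graphs, of which there are only five combinatorial types; since the statement concerns curves in the open interior of a maximal cone, it suffices to treat a single representative family per type and to check that the construction deforms continuously across the cone. For each type I would fix a tropical curve $\Gamma$ in the interior of the corresponding cone, together with a candidate tuple of sections of the tropical canonical divisor providing the map to (a modification of) the plane, using the embedding-from-linear-systems formalism of \cite{Amini, HMY}.

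The heart of the argument is the construction of the ambient modified plane. Starting from the naive tropical quartic in $\RR^2$ whose dual Newton subdivision realizes the desired combinatorial type, one sees---this is precisely the obstruction identified in \cite{BJMS14}---that the planar tropicalization generally fails to be faithful: certain bounded segments get contracted, or distinct features are superimposed. To repair this I would apply a sequence of linear modifications (tropical refinements), lifting $\RR^2$ to some $\RR^n$ along carefully chosen tropical linear forms, each designed to pull apart one superimposed pair of cells. After finitely many such refinements the image of $\Gamma$ should become an isometry onto its image, and the unbounded rays can be attached in their prescribed directions by an appropriate choice of forms. Since everything is explicit, faithfulness can in principle be verified by direct bookkeeping of edge lengths and primitive directions in each case.

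Realizability is where the recent results of \cite{MUW} on the realizability of sections of the tropical canonical divisor enter. Because the embedding is furnished by such canonical sections, I would invoke their realizability to lift the modified tropical plane to a genuine linear subplane of some $\PP^n$ over a valued field, and correspondingly to lift the tropical quartic to an algebraic genus $3$ curve tropicalizing faithfully to $\Gamma$. The non-rh hypothesis is exactly what guarantees that the lifted curve is non-hyperelliptic, so that its canonical map is an honest planar embedding as a smooth quartic rather than a two-to-one map onto a conic.

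The main obstacle, I expect, is the faithfulness verification inside the modification step: one must exhibit, simultaneously and uniformly across each entire maximal cone, linear forms that resolve every non-faithful feature of the naive tropicalization while keeping the ambient surface a legitimate, realizable tropical plane. The delicacy is that a modification separating one identified pair of cells may create fresh coincidences elsewhere, so the forms must be globally coordinated; and one must further check that all bounded edge lengths are preserved exactly, which is what upgrades a merely combinatorial embedding to a genuinely faithful (isometric) one.
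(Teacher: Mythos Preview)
Your case analysis over the five combinatorial types matches the paper's structure, but the direction of construction is inverted, and this produces a genuine gap at the realizability step. The paper works algebraically from the start: for each type (and for each of several sub-cases determined by inequalities among the edge lengths---not just one representative per cone) it writes down an explicit quartic $q=\sum a_{ij}x^iy^j\in K[x,y]$ whose coefficients have prescribed valuations \emph{and} prescribed cancellation behaviour under specific linear substitutions $x\mapsto z-c$, $y\mapsto z-c$, or $y\mapsto z-cx$. The linear re-embeddings $\langle q, z_1-\ell_1,\dots,z_n-\ell_n\rangle$ are therefore algebraic by construction, and tropicalization then reveals the hidden edge lengths or lollipops via two explicit mechanisms (lemmas on enlarging an edge dual to the short side of a trapezoid, and on unfolding a lollipop from a bounded weight-$2$ edge). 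Realizability is automatic because one begins with the algebraic curve.

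Your plan instead builds a tropical embedding first and invokes \cite{MUW} to lift. This does not work as stated: \cite{MUW} decides when a \emph{single} effective divisor in $|K_\Gamma|$ arises as the tropicalization of a canonical divisor on \emph{some} algebraic curve tropicalizing to $\Gamma$. It says nothing about realizing an $n$-tuple of such sections simultaneously on the same curve, nor about the resulting morphism $C\to\PP^n$ tropicalizing faithfully. In the paper, \cite{Amini,HMY,MUW} are used only for the obstruction (Theorem~\ref{thm-main2}), where exhibiting one non-separating or unrealizable section suffices; they play no role in the constructive direction. Relatedly, the non-rh hypothesis does not function as a guarantee that a lift is non-hyperelliptic; in the paper it simply appears as strict inequalities among edge lengths (e.g.\ $c<d$ in type \tnulltwonull{}) that are needed for the explicit coefficient choices to go through.
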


Our proof of this result is constructive. Namely, for a given maximal {\nrh} curve $\Gamma$ of genus $3$, we explicitly construct a map from $\Gamma$ to $\RR^2$ and a series of linear modifications yielding an embedding of $\Gamma$ into the modified plane as a faithful tropical quartic. Composing with the coordinate projections of $\mathbb{R}^n$, we obtain piecewise linear functions on $\Gamma$. Our construction can be viewed as a \textit{canonical embedding} of $\Gamma$ into $\RR^n$ via an $n$-tuple of canonical divisors. In addition, it provides a way to realize algebraically a curve of genus $3$ together with an $n$-tuple of canonical divisors. An exhaustive set of Examples can be found on \url{https://software.mis.mpg.de}.

\begin{theorem}\label{thm-main2}
No maximal {\em rh} tropical curve $\Gamma$ of genus $3$ can be embedded in $\RR^n$ in such a way that $\Gamma\hookrightarrow\RR^n$ can be realized faithfully by a degree $4$ morphism $C\to\PP^n$, where $C$ is a curve of genus $3$.
\end{theorem}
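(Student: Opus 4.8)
The plan is to argue by contradiction: suppose a maximal \rh{} curve $\Gamma$ of genus $3$ is faithfully realized by a degree $4$ morphism $\phi\colon C\to\PP^n$ with $C$ of genus $3$. First I would pin down the linear system. Writing $L=\phi^*\mathcal{O}_{\PP^n}(1)$, we have $\deg L=4=2g-2$, so Serre duality and Riemann--Roch give $h^0(L)=h^0(\omega_C\otimes L^{-1})+2$ with $\deg(\omega_C\otimes L^{-1})=0$. Hence either $L\cong\omega_C$, or $L\not\cong\omega_C$ and $h^0(L)=2$, in which case $\phi$ maps $C$ into a line $\PP^1\subseteq\PP^n$. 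In the latter case $\phi(C)$ is rational and $\trop\phi(C)$ is contractible, contradicting $b_1(\Gamma)=3$. So $L\cong\omega_C$, and $\phi$ is given by a subspace $V\subseteq H^0(\omega_C)$; if $\dim V\le 2$ the image again lies in a line, so faithfulness forces $V=H^0(\omega_C)$, i.e.\ $\phi$ is the canonical map of $C$ followed by a linear embedding $\PP^2\hookrightarrow\PP^n$.

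Next I would rule out $C$ hyperelliptic. If $C$ is hyperelliptic, its canonical map factors as the degree $2$ cover $C\to\PP^1$ followed by the Veronese conic $\PP^1\hookrightarrow\PP^2$, so $\phi(C)$ is a conic, again rational, and $\trop\phi(C)$ is a tree; the realization cannot then be faithful for the genus $3$ curve $\Gamma$. Therefore $C$ must be non-hyperelliptic and $\phi(C)\subseteq\PP^2\subseteq\PP^n$ is a smooth plane quartic with $\trop\phi(C)=\Gamma$. Thus the statement reduces to showing that a maximal \rh{} curve cannot be the faithful tropicalization of a canonically embedded non-hyperelliptic plane quartic.

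To produce the contradiction I would exploit the hyperelliptic structure carried by $\Gamma$. Since $\Gamma$ is \rh{}, it is the skeleton of some hyperelliptic algebraic curve, whose involution descends to a harmonic involution $\iota$ on $\Gamma$; dividing by $\iota$ yields a degree $2$ harmonic morphism $\pi\colon\Gamma\to T$ onto a metric tree $T$. The key step---and what I expect to be the main obstacle---is to show, using the realizability criterion for tropical canonical divisors of \cite{MUW} together with the theory of tropical divisors and linear systems from \cite{Amini, HMY}, that every \textit{realizable} tropical canonical divisor on such a $\Gamma$ is a pullback $\pi^*E$ of a divisor $E$ on $T$, equivalently is invariant under $\iota$. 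This is the tropical shadow of the classical fact that all canonical divisors on a hyperelliptic curve are pulled back from $\PP^1$, and the delicate point is to verify that the global-residue/realizability conditions of \cite{MUW}, imposed on the finitely many combinatorial types of maximal \rh{} curves, genuinely force this invariance and leave no room for an ``asymmetric'' realizable canonical divisor that separates the two sheets of $\pi$.

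Granting this, the contradiction is immediate. The coordinate hyperplanes of $\phi$ cut out canonical divisors on $C$ whose tropicalizations are the realizable tropical canonical divisors defining $\Gamma\hookrightarrow\RR^n$. By the key step these are all $\iota$-invariant, so each tropical coordinate function $f_i$ satisfies $f_i\circ\iota=f_i$, the additive constant ambiguity vanishing since $\iota^2=\id$. Hence the embedding factors through $\pi\colon\Gamma\to T$, which is $2$ to $1$ onto a tree; it therefore cannot be injective, and its image has first Betti number $0$. This contradicts faithfulness, since a faithful tropicalization of $\Gamma$ must be isometric to $\Gamma$ and in particular have $b_1=3$. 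Consequently no such degree $4$ realization exists.
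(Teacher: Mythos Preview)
Your reduction to the canonical linear system is correct and matches the paper's Lemma~\ref{lem-canonical}; your explicit exclusion of hyperelliptic $C$ via the conic image is a clean addition. The overall strategy---show that realizable tropical canonical sections fail to separate the two sheets of the hyperelliptic quotient---is exactly the paper's.

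The difference lies in how the key step is formulated. You aim for full $\iota$-invariance of every realizable canonical section, so that the embedding factors globally through the tree $T$; the paper instead proves, case by case over the three maximal \rh{} types, the weaker statement that some specific pair of $\iota$-conjugate points cannot be separated, which already contradicts injectivity. The paper's version is both sufficient and easier to check: for types \tnulltwonull{} and \toneoneone{} a short convexity argument shows that even the \textit{complete} tropical canonical system fails to separate certain vertices (Lemmas~\ref{lem-nulltwonull} and~\ref{lem-oneoneone}), with no appeal to \cite{MUW} at all. Only for type \ttwoonetwo{}, where the full canonical system is in fact very ample (Remark~\ref{rem-twoonetwo}), does the realizability criterion enter, and there it is used to force $\chi|_{E_c}=\chi|_{E_d}$ on the banana edges alone (Lemma~\ref{lemtwoonetwo}). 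Your global $\iota$-invariance claim would additionally require controlling the sections on the two lollipops of \ttwoonetwo{}, work that the paper sidesteps; so while your formulation is the more conceptually satisfying tropical analogue of the classical statement, it is strictly stronger than what is needed and would demand more from \cite{MUW} than the paper actually extracts.
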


The proof of this theorem is based on the theory of tropical divisors (cf. \cite{HMY, Amini}), and uses the recent description of the locus of realizable sections of the tropical canonical divisor of M\"oller, Ulirsch, and Werner \cite{MUW}. We shall emphasize that although for some curves there are purely tropical obstructions to the existence of an embedding as a faithfully tropicalized quartic, there also exist hyperelliptic tropical curves that can be embedded in a linearly-modified tropical plane, but which do not come as tropicalizations of plane quartics.

In this paper, we restrict our study to maximal tropical curves. It would be interesting to extend this study to lower-dimensional cones, and our methods are suitable for doing so. We leave this task for further research.

\subsection{Acknowledgements}
We would like to thank Omid Amini, Mar\'ia Ang\'elica Cueto, Dominik Duda, Christian Haase and Michael Joswig for helpful discussions.
Computations have been made using the Computeralgebra system \textsc{Singular} \cite{DGPS}, in particular the library tropical.lib \cite{JMM07a}, and polymake \cite{polymake}, in particular its tropical application \cite{HJ16}. We thank an anonymous referee for helpful comments on an earlier version of this paper.

\section{Preliminaries}
\subsection{The moduli space of tropical curves of genus $3$}\label{subsec:moduliandcurves}

Recall that an \textit{abstract tropical curve} is a connected metric graph $\Gamma$ with possibly a few unbounded half-edges attached (called ends) together with a function associating a genus $g(V)\in \ZZ_{\ge 0}$ to each vertex $V$. A vertex of genus $0$ must be at least $3$-valent.
The genus of $\Gamma$ is defined to be $ g(\Gamma):=b_1(\Gamma) + \sum_{V} g(V)$,
where $b_1(\Gamma)$ denotes the first Betti number.
The \textit{combinatorial type} of a tropical curve is obtained by disregarding the metric structure.

The set of all tropical curves of a given combinatorial type can be parameterized by the quotient of an open orthant in a real vector space under the action of the automorphism group of the combinatorial type. Cones corresponding to different combinatorial types can be glued together by keeping track of possible degenerations under the poset of combinatorial types. In this way, the tropical moduli space $M_g^{\trop}$ of curves of genus $g$ inherits the structure of an abstract cone complex. For more details on tropical moduli spaces of curves, see e.g.\ \cite{ACPModuli, cav.et.at:17, cha:12,GKM,mik:07}.

\begin{definition} \label{def-maximal} We say that an abstract tropical curve is \textit{maximal} if it appears in the interior of a top-dimensional cone of $M_g^{\trop}$.
\end{definition}
In \cite[Figure~1]{cha:12}, the complete poset of  cones in $M_3^{\trop}$ is given. In particular, the maximal tropical curves of genus $3$ belong to one of the following types:
\begin{center}
\tnullnullnull{}
,\qquad \tnulltwonull{}
,\qquad \toneoneone{}
,\qquad \ttwoonetwo{}
,\qquad \tthreenullthree{}
.
\end{center}

Recall that a tropical curve is called {\em hyperelliptic}, if it admits a divisor of degree $2$ and rank $1$, \cite[Definition~2.3]{Chan13}. See also \cite{Baker, HMY} for an introduction to the theory of tropical divisors and linear systems. By \cite[Theorem~3.12]{Chan13}, a tropical curve is hyperelliptic if and only if it either admits a non-degenerate harmonic morphism of degree $2$ to a tree or has exactly two vertices. We shall mention that such a degree $2$ morphism need not be the tropicalization of a degree $2$ morphism from a hyperelliptic curve to the projective line by \cite[Example~4.14]{ABB}.

All top-dimensional cones in $M_3^{\trop}$ but \tnullnullnull{} contain hyperelliptic tropical curves, e.g., in types \tnulltwonull{}, \toneoneone{}, \ttwoonetwo{} the edges forming a $2$-edge cut on a hyperelliptic curve need to have the same lengths. We refer the reader to \cite[Figure~2]{Chan13} for the complete poset of types of hyperelliptic curves of genus $3$.

\begin{definition} \label{def-hyper} We say that a tropical curve is \textit{realizably hyperelliptic} or {\em \rh}, if it is the tropicalization of an algebraic hyperelliptic curve. All other curves are called {\em {\nrh} curves}.
\end{definition}

It follows from \cite[Theorem~4.13]{ABB} that the hyperelliptic tropical curves of types \tnulltwonull{}, \toneoneone{}, \ttwoonetwo{} are \rh, while the curves of type \tthreenullthree{} are not.

\subsection{Tropicalized quartics in $\mathbb{R}^2$}

We assume that the reader is familiar with the basics of tropical geometry, in particular with tropical curves in $\mathbb{R}^2$ and their dual Newton subdivisions. For an introduction to these topics, see e.g.\ \cite{BIMS, Gathmann, FirstSteps}.

For the sake of  explicitness, we pick as ground field the field $K$ of Puiseux series in $t$, with valuation $\valuation$ sending a series to its least exponent \cite{Mar10}.
We use the max-convention, i.e.\ the tropicalization map sends a point $(x_1,\ldots,x_n)\in (K^{\ast})^n$ to $(-\valuation (x_1),\ldots,-\valuation (x_n)) \in \mathbb{Q}^n$.
By $\Trop(X)$ we denote the (naive) tropicalization of an algebraic variety  $X\subset (K^{\ast})^n$.
An example of a picture of a tropicalized quartic can be found in Figure~\ref{fig:type020embedded}.

\subsection{Modifications, re-embeddings and coordinate changes}

\label{sec-modi}

For our purposes, it is sufficient to consider modifications along tropical lines without a vertex, i.e.\ defined by a tropical binomial.
Here, we assume without restriction that we modify at a tropical line defined by the tropical polynomial $L=\max\{0, Y\}$. The graph of $L$
considered as a function on $\RR^2$ consists of two linear
pieces. At the break line, we attach a two-dimensional cell spanned in
addition by the vector $(0,0,-1)$ (see e.g.\ \cite[Construction 3.3]{AR07}).
 We assign multiplicity $1$ to each cell and obtain a
balanced fan in $\RR^3$. It is called the \textit{modification of
  $\RR^2$ along $L$}. If $\Gamma\subset \mathbb{R}^2$ is a plane tropical curve, we bend it analogously and attach downward ends to get the modification of $\Gamma$ along $L$, which now is a tropical curve in the modification of $\RR^2$ along $L$.

Let $\ell=m+y\in K[x,y]$ be a lift of $L$, i.e.\ $-\valuation (m)=0$.
We fix an irreducible polynomial $q\in
K[x,y]$ defining a curve in the torus $(K^{\ast})^2$. The
tropicalization of the variety defined by $I_{q,\ell}=\langle q, z-\ell\rangle\subset K[x,y,z]$
is a tropical curve in the modification of $\RR^2$ along $L$. We call
it the \textit{linear re-embedding} of $\Trop(V(q))$
\textit{with respect to $\ell$}.

We describe $\Trop(V(I_{q,\ell}))$ by means of two projections (see Figure~\ref{fig:tropicalModificationPlusProjections}):
\begin{enumerate}[leftmargin=*]
\item the projection $\pi_{XY}$ to the coordinates $(X,Y)$ produces the
  original tropical curve $\Trop(V(q))$.
\item the projection $\pi_{XZ}$ gives a new tropical plane curve
  $\Trop(V(\tilde{q}))$ inside the projections of the cells $\{Y\geq 0, Z=Y\}$ and $\{Y=0, Z\leq 0\}$, where
  $\tilde{q}=q(x,z-m)$. The polynomial $\tilde{q}$ generates the
elimination ideal $I_{q,\ell}\cap K[x,z]$.
\end{enumerate}

\begin{figure}[h]
  \centering
  \begin{tikzpicture}
    \node (base) at (0,0) {\includegraphics[width=7cm]{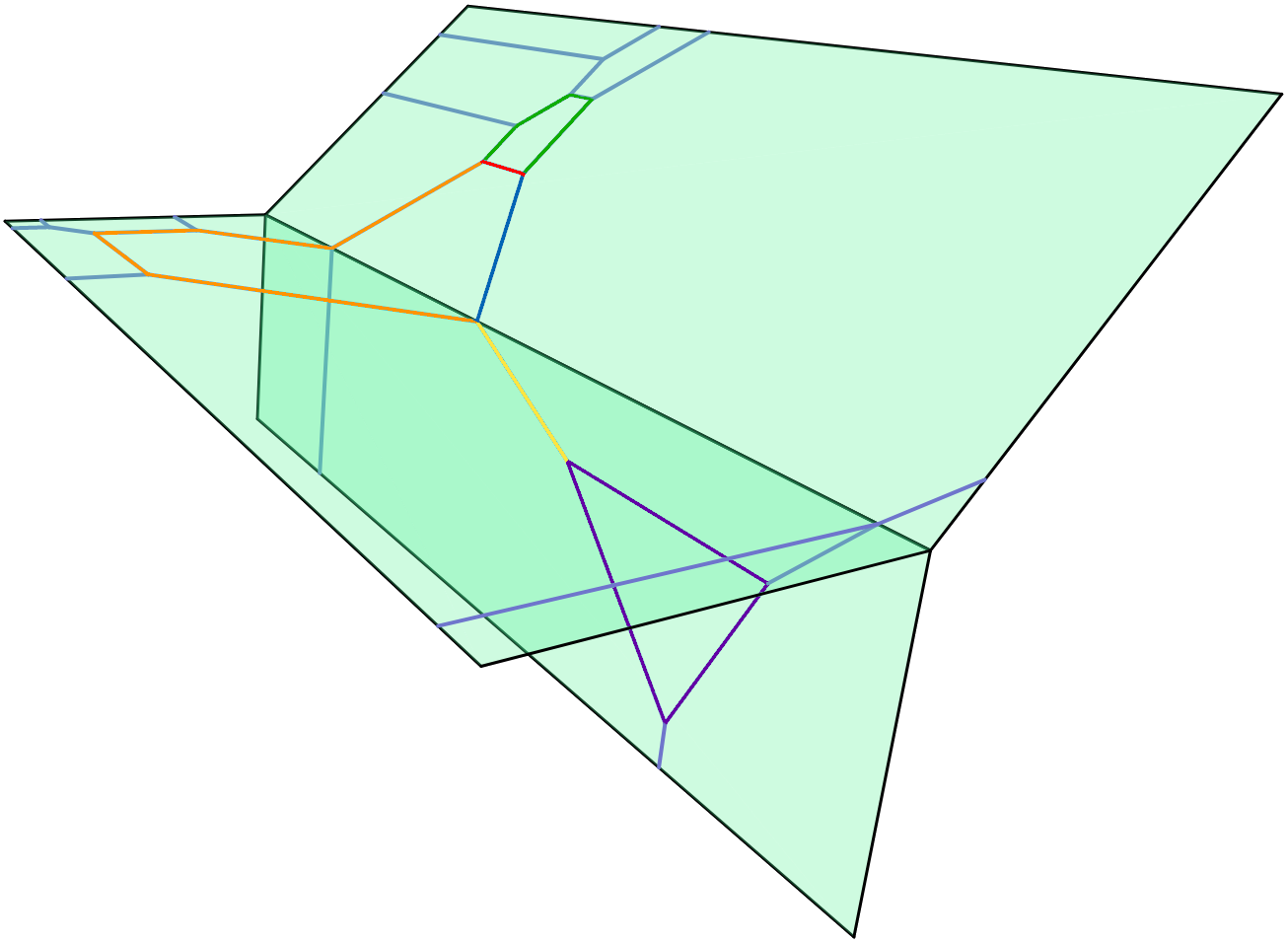}};
    \draw[draw opacity=0] ($(base.north west)+(2.7,-0.2)$) -- node[above,sloped] {$\Trop(\langle g,y-(z-1)\rangle)\subseteq\RR^{\{x,y,z\}}$} ++(5,-0.5);
    \node[anchor=west,xshift=5mm] (topRight) at (base.east) {\includegraphics[width=7cm]{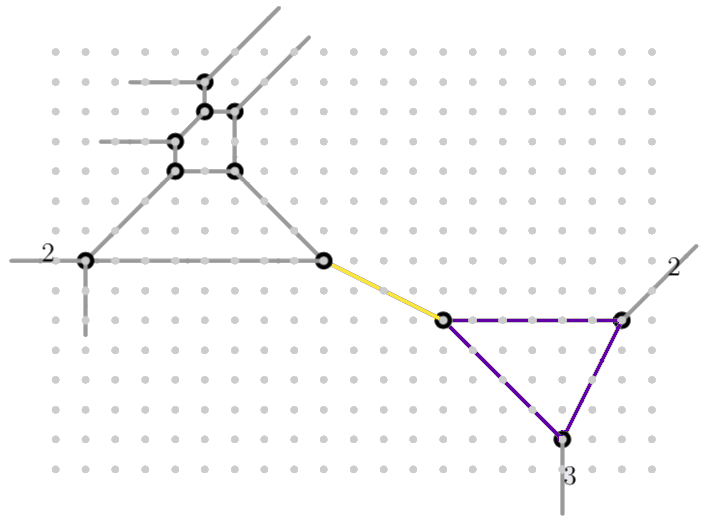}};
    \node[anchor=north,yshift=5mm,fill=white] at (topRight.south) {$\Trop(g(x,z-1))\subseteq\RR^{\{x,z\}}$};
    \node[anchor=north,yshift=-10mm] (bottomLeft) at (base.south) {\includegraphics[width=7cm]{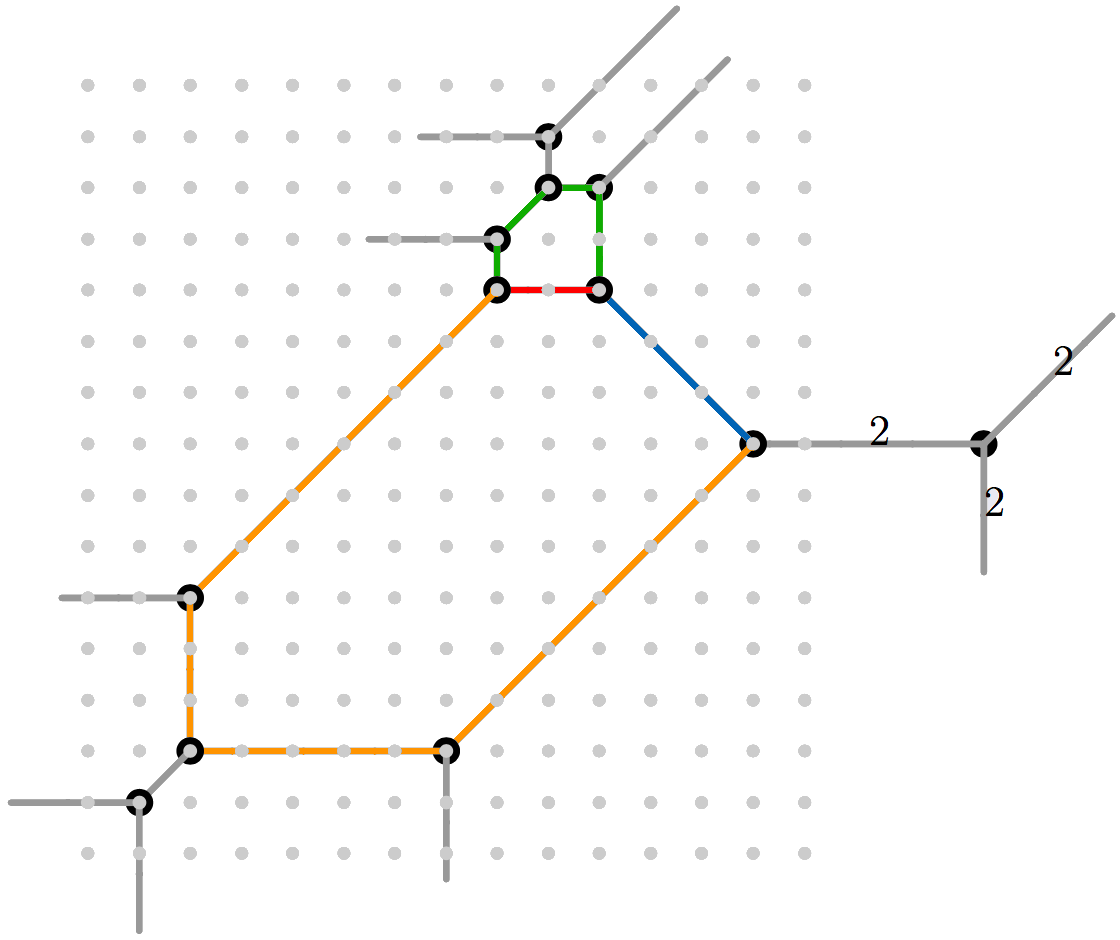}};
    \node[anchor=north,yshift=5mm] at (bottomLeft.south) {$\Trop(g(x,y))\subseteq\RR^{\{x,y\}}$};
    \node[anchor=north west,xshift=8mm,yshift=-25mm] (bottomRight) at (base.south east)
    {
      \begin{tikzpicture}[scale=1.2]
        \draw[edgeRed,very thick] (0,0) to[out=130,in=230] node[anchor=east] {$2$} (0,2);
        \draw[edgeGreen,very thick] (0,0) to[out=50,in=310] node[anchor=west] {$5$} (0,2);
        \draw[edgeOrange,very thick] (0,2) -- node[anchor=south] {$20$} (2,1);
        \draw[edgeBlue,very thick] (0,0) -- node[anchor=north] {$3$} (2,1);
        \draw[edgeYellow,very thick] (2,1) -- node[anchor=south] {$2$} (3,1);
        \draw[edgeViolet,very thick] (3.5,1) circle (0.5);
        \node[edgeViolet,anchor=west] at (4,1) {$12$};
      \end{tikzpicture}
    };
    \draw[left hook->] (5,-5) -- ++(-3,3);
    \draw[->>] (3,0) -- node[above] {$\pi_{XZ}$} ++(1,0);
    \draw[->>] (0,-3) -- node[left] {$\pi_{XY}$} ++(0,-1);
  \end{tikzpicture}\vspace{-2mm}
  \caption{A tropical curve of type \toneoneone{} embeddable into a tropical plane in $\RR^3$ but not into $\RR^2$ itself.}\label{fig:tropicalModificationPlusProjections}
\end{figure}

\begin{lemma}
The linear re-embedding $\Trop(V(I_{q,\ell}))$ in the
  modification of $\RR^2$ along the linear tropical polynomial
  $L=\max\{0, Y\}$ is determined by the two tropical plane
  curves $\Trop(V(q))$ and $\Trop(V(\tilde{q}))$, where
  $\tilde{q}=q(x,z-m)$.
\end{lemma}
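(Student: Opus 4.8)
The plan is to show that the modification $\Trop(V(I_{q,\ell}))$ is recoverable as a subset of $\RR^3$ from its two coordinate projections $\pi_{XY}$ and $\pi_{XZ}$, together with the known combinatorial structure of the modified plane. First I would recall from Section~\ref{sec-modi} that the modification of $\RR^2$ along $L=\max\{0,Y\}$ lives in $\RR^3$ and decomposes into the two graph cells $\{Y\ge 0,\ Z=Y\}$ and $\{Y\le 0,\ Z=0\}$ together with the vertical cell attached along the break line $\{Y=0\}$ spanned additionally by $(0,0,-1)$. Since the lift is $\ell=m+y$ with $-\valuation(m)=0$, the relation $z=\ell$ tropicalizes to $Z=\max\{Y, 0\}=L$ on the locus where no cancellation occurs, so every point of $\Trop(V(I_{q,\ell}))$ lies in this modified plane. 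The key observation is that on the modified plane the $Z$-coordinate is a piecewise-linear function of $(X,Y)$: one has $Z=Y$ on the upper cell, $Z=0$ on the lower cell, and $Z$ is free (ranging in $(-\infty,0]$) only over points with $Y=0$, i.e.\ on the vertical cell.

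Next I would argue the two directions of determination. For any point of $\Trop(V(I_{q,\ell}))$ whose image under $\pi_{XY}$ has $Y\ne 0$, the point is uniquely pinned down by $\pi_{XY}$ alone, because the $Z$-coordinate is forced by the graph relation ($Z=Y$ or $Z=0$). The only ambiguity is over the break locus $\{Y=0\}$, where a single point $(X,0)\in\Trop(V(q))$ may lift to a whole vertical segment; here the extra data of $\pi_{XZ}$ records exactly which heights $Z\le 0$ occur, since on the vertical cell $\pi_{XZ}$ is injective in the $Z$-direction. Thus the pair $\bigl(\pi_{XY}(P),\pi_{XZ}(P)\bigr)$ determines $P$ uniquely, and conversely $\Trop(V(q))=\pi_{XY}(\Trop(V(I_{q,\ell})))$ by the first projection statement in the text, while $\Trop(V(\tilde q))=\pi_{XZ}(\Trop(V(I_{q,\ell})))$ with $\tilde q=q(x,z-m)$ by the elimination-ideal computation already established (namely $\tilde q$ generates $I_{q,\ell}\cap K[x,z]$). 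Therefore the curve is set-theoretically the fiber product of its two projections over the common $X$-axis, intersected with the modified plane.

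I would finish by verifying that this reconstruction respects the weighted/balanced structure and not merely the underlying sets. Because the modification assigns multiplicity $1$ to each new cell and the projections $\pi_{XY},\pi_{XZ}$ are lattice-preserving linear maps that are injective on the relevant cells (away from the directions they collapse), the weights on $\Trop(V(I_{q,\ell}))$ are transported faithfully: an edge mapping isomorphically under $\pi_{XY}$ inherits the weight of its image in $\Trop(V(q))$, and the downward ends over the break locus inherit their weights from $\Trop(V(\tilde q))$. The main obstacle I anticipate is the bookkeeping at the break line $\{Y=0\}$, where several phenomena overlap: edges of $\Trop(V(q))$ crossing the line, new vertical ends created by the modification, and possible coincidences in the $\pi_{XZ}$ image. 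One must check that no information is lost in the collapse---that is, that a point of the curve is never mapped to the same pair under \emph{both} projections as a genuinely different point---and that the balancing condition at each vertex of the modified curve is consistent with the balancing of both planar images. This local analysis along the break line, rather than any global argument, is where the real work lies.
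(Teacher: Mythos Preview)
Your approach is essentially the one the paper has in mind: the paper does not give a self-contained argument but cites \cite[Lemma~2.2]{CM14}, summarizing its content as precisely the observation you make---that each projection is injective on the interiors of the top-dimensional cells of the modified plane, so the only nontrivial work is recovering the curve over the codimension-one break locus $\{Y=0\}$. Your identification of this break-line bookkeeping as the crux is exactly right; one small caution is that your ``fiber product over the $X$-axis intersected with the modified plane'' description can in principle overcount at points $(X_0,0,0)$ (since $(X_0,0)\in\Trop(V(\tilde q))$ may arise from the lower cell rather than the vertical cell), so the careful local analysis you flag in your last paragraph is indeed needed rather than a purely set-theoretic fiber-product formula.
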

For a proof, see \cite[Lemma~2.2]{CM14}.
The content of the proof is to recover the parts of $\Trop(V(I_{q,\ell}))$ which are not contained in the interior of top-dimensional cells --- for the images of the lower codimension cell the preimage under the projection is not unique.
The projection $\Trop(V(\tilde{q}))$ of $\Trop(V(I_{q,\ell}))$
is given by a linear coordinate change of the polynomial $q$.

We can study the Newton subdivision of the projected curve in terms of these coordinate changes:
A term $a\cdot x^i y^j$  of $q$ is replaced by $a\cdot x^i(z-m)^j$, and so it contributes to all terms of the form $x^i z^k$ for $0\leq k\leq j$. This is called the ``feeding process'' and is visualized in \cite[Figure~2]{LM17}.
From the feeding, we can deduce expected valuations of the coefficients. The subdivision corresponding to the expected valuations is dual to the projection of the modified curve, i.e.\ we bend $\Trop(V(q))$ so
that it fits on the graph of $L$, attach downward ends, and project with $\pi_{XZ}$.

We must have cancellation in the coefficients of $\tilde{q}$ to obtain a valuation which is different from the expected valuation, and hence to obtain a linearly re-embedded curve  $\Trop(V(I_{q,\ell}))$  which does not equal the modification of $\Trop(V(q))$ but which makes hidden geometric properties visible.
For almost all lifts $\ell$ of $L$, we do not produce cancellation and thus $\Trop(V(I_{q,\ell}))$  is just a modification. Here, we care for the special choices of $\ell$ which do produce cancellation, so that $\Trop(V(I_{q,\ell}))$ reveals new features.

In \cite{CM14}, such special lifts $\ell$ are constructed for a given curve $V(q)$ such that hidden geometric features become visible in the linear re-embedding. More concretely, \cite{CM14} contains an algorithm that takes a cubic polynomial $q$ and produces finitely many linear re-embeddings in such a way that the linearly re-embedded curve reflects the $j$-invariant.

Here, we construct quartic polynomials $q$ and linear re-embeddings revealing new features of the curve on the modified plane which ``do not fit into $\mathbb{R}^2$''. More precisely, given an abstract tropical curve $\Gamma$ which is not embeddable in $\mathbb{R}^2$ following \cite{BJMS14}, we give a quartic $q$ together with linear re-embeddings such that the linearly re-embedded tropical quartic equals $\Gamma$ (after shrinking leaves).

\subsection{Faithful tropicalization and the forgetful map}

Let $C$ be a smooth algebraic curve over a non-archimedean field $K$. In this paragraph we shall think about $C$ as a smooth projective curve $\overline{C}$ with (marked) punctures. Recall that if $C$ is stable then one can associate to it the stable model over the ring of integers of the field, whose central fiber -- called stable reduction -- is a nodal curve with (marked) punctures satisfying natural stability conditions.

In \cite[\S 2.1.1]{Tyo12}, an abstract tropical curve $\Gamma_C$ was associated to $C$ in a canonical way by considering the dual graph of the stable reduction equipped with a natural metric coming from the geometry of the model. In terms of Berkovich spaces \cite{berkovichbook}, the construction in \cite{Tyo12} is equivalent to considering the minimal skeleton of Berkovich analytification $C^{\an}$, i.e. the minimal $\Gamma\subset C^{\an}$ such that $C^{\an}\setminus\Gamma$ is a disjoint union of discs. Since each component of the stable reduction has a well-defined genus, the tropicalization $\Gamma_C$ comes equipped with a genus function on its vertices. Moreover, $$g(\overline{C})=g(\Gamma):=b_1(\Gamma_C)+\sum_{v\in V(\Gamma_C)}g(v).$$

Let us mention that puncturing $C$ further results in attaching trees of vertices of genus $0$ and unbounded ends to the tropicalization $\Gamma_C$. Thus, the non-trivial geometry of the tropicalization $\Gamma_C$ is contained in $\Gamma_{\overline{C}}$ (at least if $\overline{C}$ itself is stable). Associating $\Gamma_{\overline{C}}$ equipped with the genus function to an algebraic curve $C$ defines the canonical tropicalization map $M_{g}(K)\to M_{g}^{\trop}$.

If the algebraic curve $C$ is embedded in an algebraic torus $(K^{\ast})^n$ then the canonical tropicalization $\Gamma_C$ comes with a natural (not necessarily injective) map $h\colon\Gamma_C\to\RR^n$ turning it into a parameterized tropical curve, see \cite[\S 2.2.1]{Tyo12}. Moreover, $h(\Gamma_C)$ is nothing but the naive tropicalization $\Trop(C)$ obtained by applying the valuation map to the points of $C$ coordinatewise.

Although, unlike the naive tropicalization, the canonical tropicalization is purely intrinsic, it is not easy to compute it explicitly. Thus, it is natural to look for embeddings $C\subset (K^{\ast})^n$, whose naive tropicalizations $\Trop(C)\subset \RR^n$ are fine enough. From the discussion above it is clear that $\Trop(C)$ reflects the metric geometry of $\Gamma_C$ if $h$ is injective at least on the minimal skeleton $\Gamma_{\overline{C}}$. In such a case we say that $\Trop(C)$ is a {\em faithful tropicalization}. Notice that if $\Trop(C)$ is a faithful tropicalization then $h$ maps $\Gamma_{\overline{C}}$ isometrically onto its image by the very definition of the metric on $\Trop(C)$. For example, a smooth tropicalized curve in $\mathbb{R}^2$ (i.e. dual to a unimodular triangulation) is faithful. In general, following~\cite[Theorem 5.24]{BPR1}, one can verify faithfulness by ensuring that the tropical multiplicities of all vertices and edges of the tropical curve are one. For vertices, this means that the corresponding initial degeneration has to be irreducible, for edges -- the weight has to be one.

Let us conclude this section by defining the {\em tropical forgetful map} $\ft^{\trop}$. This is the map that associates to a tropical curve $\Gamma$ an element of $M_g^{\trop}$, $g=g(\Gamma)$, by shrinking the unbounded ends, repeatedly shrinking leaves of genus $0$, and disregarding the vertices of genus $0$ and valency $2$. In particular, if $\overline{C}$ is stable then $\ft^{\trop}(\Gamma_C)=
\Gamma_{\overline{C}}$,
  and if $\Trop(C)$ is a faithful tropicalization then $\ft^{\trop}(\Trop(C))=\Gamma_{\overline{C}}$,
 where we treat $\Trop(C)$ with its graph structure. Readers not familiar with Berkovich analytic curves and stable reductions can nevertheless follow our constructions in Section~\ref{sec-faithfulquartics} to prove Theorem~\ref{thm-main}: they can take our construction as a way to produce {\em a} tropicalization on a realizable model of the tropical plane with a prescribed image under the tropical forgetful map $\ft^{\trop}$.

\section{Constructing faithful tropicalized quartics}\label{sec-faithfulquartics}

This section is dedicated to a constructive proof of Theorem~\ref{thm-main}. For any maximal non-rh tropical curve $\Gamma$ of genus $3$ we explicitly construct a plane quartic curve $C\subseteq K^{2+n}$ whose tropicalization $\Trop(C)$ is faithful and satisfies $\ft^{\trop}(\Trop(C))=\Gamma$. The proof is a case by case analysis depending on the combinatorial type of $\Gamma$:

\begin{center}
  \begin{tabular}{llllll}
    \tnullnullnull{}:& Proposition~\ref{prop:nullnullnull}, &
                                                              \tnulltwonull{}:& Proposition~\ref{prop:nulltwonull}, &
                                                                                                                      \toneoneone{}:& Proposition~\ref{prop:oneoneone},\\
    \ttwoonetwo{}:& Proposition~\ref{prop:twoonetwo},&
                                                       \tthreenullthree{}:& Proposition~\ref{prop:threenullthree}.
  \end{tabular}
\end{center}

We describe $C\subseteq K^{2+n}$ in terms of an ideal $$I:=\langle q, l_1, l_2, \dots, l_n \rangle\subseteq K[x,y,z_1,\ldots,z_n],$$ where $q$ is a quartic polynomial in $x,y$ and $l_1,\dots,l_k$ are linear forms representing a sequence of modifications as described in Section~\ref{sec-modi}.

This section builds on the work of Brodsky-Joswig-Morrison-Sturmfels \cite{BJMS14}, who characterized all tropical curves of genus $3$ that are embeddable into $\RR^2$.

\subsection{Modification techniques}\label{subsec-techniques}
First, we present general constructions that are used multiple times.

\subsubsection{Enlarging edges from trapezoids}\label{subsec:enlarge}
Let $q \in K[x,y]$ be a quartic polynomial whose Newton subdivision contains a trapezoid with a simplex adjacent to its shorter edge as in Figure~\ref{fig:threecases}. For each of the five types, we show how to enlarge the edge $E$ in $\Trop(V(q))$ dual to the shorter edge of the trapezoid. More precisely, assume the edge $E$ of length $f'$ connects two vertices $V$ and $W$ in $\Trop(V(q))$. We construct a suitable linear reembedding given by either $I_{q,x+t^{-l}}$ or $I_{q,y+t^{-l}}$ or $I_{q,y+t^{-l}x}$ with two vertices $V'$ and $W'$ in the $1$-dimensional cell of the modified plane projecting to $V$ and $W$ under $\pi_{XY}$, such that the path connecting $V'$ and $W'$ in the newly attached cell of the modified plane has length $f>f'$.


\begin{figure}[ht]
  \centering
  \begin{minipage}{0.195\linewidth}
    \centering
    \begin{tikzpicture}[scale=0.6]
      \node[anchor=north,font=\footnotesize,fill=white,yshift=-1mm] at (1,0) {(1)};
      \draw (0,0) -- (2,0);
      \draw (0,0) -- (1,1);
      \draw (1,1) -- (1,2);
      \draw[line width=1mm,edgeRed] (1,1) -- (2,1);
      \draw (2,0) -- (2,1);
      \draw (1,2) -- (2,1);
      \fill (0,0) circle (2.5pt);
      \fill (1,0) circle (2.5pt);
      \fill (2,0) circle (2.5pt);
      \fill (1,1) circle (3.5pt);
      \node[fill=white,left,xshift=-1.5mm,font=\tiny,inner sep=0pt,outer sep=0pt] at (1,1) {$(i,j)$};
      \fill (2,1) circle (2.5pt);
      \fill (1,2) circle (2.5pt);
    \end{tikzpicture}
  \end{minipage}%
  \begin{minipage}{0.195\linewidth}
    \centering
    \begin{tikzpicture}[scale=0.6]
      \node[anchor=north,font=\footnotesize,fill=white,yshift=-1mm] at (1,0) {(2)};
      \draw (0,0) -- (2,0);
      \draw (0,0) -- (1,1);
      \draw (1,1) -- (2,2);
      \draw[line width=1mm,edgeRed] (1,1) -- (2,1);
      \draw (2,0) -- (2,2);
      \fill (0,0) circle (2.5pt);
      \fill (1,0) circle (2.5pt);
      \fill (2,0) circle (2.5pt);
      \fill (1,1) circle (3.5pt);
      \node[fill=white,left,xshift=-1.5mm,font=\tiny,inner sep=0pt,outer sep=0pt] at (1,1) {$(i,j)$};
      \fill (2,1) circle (2.5pt);
      \fill (2,2) circle (2.5pt);
    \end{tikzpicture}
  \end{minipage}%
  \begin{minipage}{0.195\linewidth}
    \centering
    \begin{tikzpicture}[scale=0.6]
      \node[anchor=north,font=\footnotesize,fill=white,yshift=-1mm] at (2,0) {(3)};
      \draw (2,2) -- (4,0);
      \draw (1,1) -- (1,2);
      \draw (1,1) -- (2,1);
      \draw[line width=1mm,edgeGreen] (1,2) -- (2,1);
      \draw (1,2) -- (2,2);
      \draw (2,1) -- (4,0);
      \fill (4,0) circle (2.5pt);
      \fill (1,1) circle (2.5pt);
      \fill (2,1) circle (3.5pt);
      \node[fill=white,below,xshift=-2.5mm,yshift=-1mm,font=\tiny,inner sep=0pt,outer sep=0pt] at (2,1) {$(i,j)$};
      \fill (3,1) circle (2.5pt);
      \fill (1,2) circle (2.5pt);
      \fill (2,2) circle (2.5pt);
    \end{tikzpicture}
  \end{minipage}%
  \begin{minipage}{0.195\linewidth}
    \centering
    \begin{tikzpicture}[scale=0.6]
      \node[anchor=north,font=\footnotesize,fill=white,yshift=-1mm] at (1,0) {(4)};
      \draw (0,0) -- (0,2);
      \draw (0,0) -- (1,1);
      \draw (0,2) -- (1,2);
      \draw[line width=1mm,edgeBlue] (1,1) -- (1,2);
      \draw (1,1) -- (2,1);
      \draw (1,2) -- (2,1);
      \fill (0,0) circle (2.5pt);
      \fill (0,1) circle (2.5pt);
      \fill (1,1) circle (3.5pt);
      \node[fill=white,below,xshift=2.5mm,yshift=-1mm,font=\tiny,inner sep=0pt,outer sep=0pt] at (1,1) {$(i,j)$};
      \fill (2,1) circle (2.5pt);
      \fill (0,2) circle (2.5pt);
      \fill (1,2) circle (2.5pt);
    \end{tikzpicture}
  \end{minipage}%
  \begin{minipage}{0.195\linewidth}
    \centering
    \begin{tikzpicture}[scale=0.6]
      \node[anchor=north,font=\footnotesize,fill=white,yshift=-1mm] at (1,0) {(5)};
      \draw (0,0) -- (0,2);
      \draw (0,0) -- (1,1);
      \draw (0,2) -- (1,2);
      \draw[line width=1mm,edgeBlue] (1,1) -- (1,2);
      \draw (1,1) -- (2,2);
      \draw (1,2) -- (2,2);
      \fill (0,0) circle (2.5pt);
      \fill (0,1) circle (2.5pt);
      \fill (1,1) circle (3.5pt);
      \node[fill=white,below,xshift=2.5mm,yshift=-1mm,font=\tiny,inner sep=0pt,outer sep=0pt] at (1,1) {$(i,j)$};
      \fill (2,2) circle (2.5pt);
      \fill (0,2) circle (2.5pt);
      \fill (1,2) circle (2.5pt);
    \end{tikzpicture}
  \end{minipage}
  \caption{Trapezoids within subdivisions}
  \label{fig:threecases}
\end{figure}
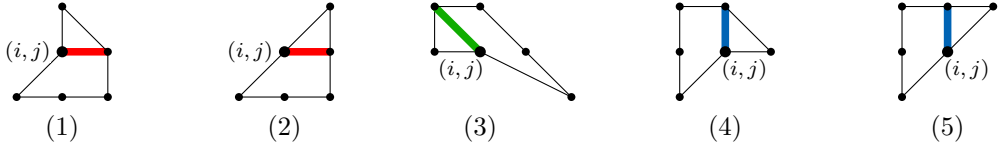

\begin{lemma}\label{lem:unfoldf} Let $q:=\sum a_{mn}x^my^n\in K[x,y]$ be a polynomial whose Newton subdivision contains a trapezoid with a simplex adjacent to its shorter edge as in Figure~\ref{fig:threecases} (1) or (2). Let $f'$ be the length of the edge $E$ of $\Trop(V(q))$ dual to the red edge, and assume $E$ is contained in the line $\{x=l\}$. Let $f>f'$ and suppose the coefficients $a_{mn}$ of $q$ satisfy the following conditions for some $l_1, l_2$ with $l_2<f'$ and $f=2l_1+l_2+f'$:
\begin{align*}
  &\valuation\Big(\sum_{k\geq 0}(-t^{-l})^{k}a_{k,j-1}\Big)\geq f+1,&& \valuation\Big(\sum_{k\geq 0}(-t^{-l})^{k}a_{k,j}\Big)=-l+l_1+l_2,\\
  &\valuation\Big(\sum_{k\geq 0}(-t^{-l})^{k} a_{k,j+1}\Big)=0,&& \valuation\Big(\sum_{k\geq 0}(-t^{-l})^{k-1} \cdot k\cdot  a_{k,j-1}\Big)=l+l_1+f',\\
&\valuation\Big(\sum_{k\geq 0}(-t^{-l})^{k-1}\cdot k\cdot a_{k,j}\Big)=0,&&\valuation\Big(\sum_{k\geq 0}(-t^{-l})^{k-2}\cdot\textstyle\binom{k}{2}\cdot a_{k,j-1}\Big)=f'+2l.
\end{align*}
Then adding the equation $x=z-t^{-l}$ produces an ideal $I$ such that in $\Trop(V(I))$, the edge $E$ is enlarged to length $f$.

\end{lemma}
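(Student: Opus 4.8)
The plan is to make the linear re-embedding completely explicit by eliminating $x$ and then to read the enlarged edge off the Newton subdivision of the resulting plane curve. Adding the equation $x=z-t^{-l}$ and projecting away the $x$-coordinate produces the elimination polynomial $\tilde q(y,z):=q(z-t^{-l},y)=\sum_{k,n}\tilde a_{kn}z^ky^n$, whose coefficients are obtained from those of $q$ by the binomial feeding
$$\tilde a_{kn}=\sum_{m\ge k}\binom{m}{k}(-t^{-l})^{m-k}a_{mn}.$$
The first step is simply to recognize that the six expressions occurring in the hypotheses are exactly $\tilde a_{0,j-1},\tilde a_{0,j},\tilde a_{0,j+1},\tilde a_{1,j-1},\tilde a_{1,j}$ and $\tilde a_{2,j-1}$; thus the hypotheses prescribe $\valuation(\tilde a_{kn})$ at the six lattice points in columns $0,1,2$ and rows $j-1,j,j+1$ surrounding the image of the trapezoid. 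This is a direct expansion, and it treats the two subdivision types (1) and (2) of Figure~\ref{fig:threecases} uniformly, since the conditions and the ensuing local analysis do not see the precise shape of the simplex sitting above the red edge.

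Next I would apply the re-embedding lemma stated above (cf.\ \cite[Lemma~2.2]{CM14}): $\Trop(V(I))$ is determined by the two plane curves $\Trop(V(q))$ and $\Trop(V(\tilde q))$. The former already contains $E$ of length $f'$ over the break locus $\{X=l\}$, so it remains to compute the local shape of $\Trop(V(\tilde q))$ near the image of $E$. I would do this by determining the regular subdivision of the Newton polygon of $\tilde q$ induced by the prescribed valuations. The six height conditions force a short chain of new vertices, dual to the small cells spanned by the six lattice points, and the large value $\valuation(\tilde a_{0,j-1})\ge f+1$ is precisely what removes that lattice point from the relevant faces, so that a whole chain---rather than a single edge of length $f'$---appears. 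Pulling this chain back through the two-projection description places the lifts $V'$ and $W'$ of $V$ and $W$ on the $1$-dimensional cell of the modified plane projecting to $V,W$ under $\pi_{XY}$, and exhibits the image of $E$ as a path running through the newly attached downward cell.

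The final step is the length computation. Reading the coordinates of the new vertices off the prescribed valuations, the successive segments of the path have lattice lengths expressible through $l,l_1,l_2,f'$; using the relation $f=2l_1+l_2+f'$ (and $l_2<f'$, which guarantees the genuinely trapezoidal rather than degenerate shape) these sum to $f$. Since the projection forgetting $x$ restricts to an isometry on the downward cell, where $X\equiv l$ and the induced lattice is $\ZZ(0,1,0)+\ZZ(0,0,1)$, these tropical lengths are the honest lengths in the modified plane. Hence the path from $V'$ to $W'$ has length $f>f'$, as claimed.

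The main obstacle is the polyhedral bookkeeping in the middle step. The prescribed valuations encode genuine cancellations rather than generic feeding, so one must verify that the six lattice points lie on exactly the intended faces of the upper hull, with the stated adjacencies, and that the remaining monomials of the quartic---in particular the coefficient at $(2,j)$ and the higher-degree terms---do not perturb the local picture. Most delicate is the compatibility with the other projection: the configuration has to be consistent with $\pi_{XY}$ returning the length-$f'$ edge $E$ and not a longer one, which is what ultimately pins down the unprescribed valuations and forces the path to fold back to the break locus after lattice length exactly $f$. Once this consistency is secured, matching the individual segment lengths to $2l_1+l_2+f'$ is routine, and one should finally check that all tropical weights along the path remain $1$, so that the enlargement does not destroy faithfulness.
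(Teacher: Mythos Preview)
Your proposal is correct and follows essentially the same approach as the paper: identify the six hypotheses as the valuations of the coefficients $\tilde a_{0,j-1},\tilde a_{0,j},\tilde a_{0,j+1},\tilde a_{1,j-1},\tilde a_{1,j},\tilde a_{2,j-1}$ of the elimination polynomial $\tilde q=q(z-t^{-l},y)$, read off the resulting local Newton subdivision (the paper does this via a figure after normalizing to $l=0$), and sum the lattice lengths of the segments in the new chain to get $2l_1+l_2+f'=f$. The only difference is cosmetic---the paper first translates so that the upper vertex of $E$ sits at the origin with $l=0$, whereas you carry $l$ throughout---and your discussion of the polyhedral bookkeeping and weight-$1$ check is more explicit than what the paper records.
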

\begin{proof}
Without restriction, assume that the upper vertex of $E$ is at $(0,0)$, and the valuations of the coefficients corresponding to the triangle dual to $(0,0)$ are $0$ (see Figure~\ref{fig:localf} top). In particular, $l=0$.
We add the equation $x=z-1$ to $q$ and project to the $yz$-coordinates to study the newly attached part of the re-embedded curve $\Trop(V(q,x-z+1))$. The conditions given above are precisely the conditions on the $y^{j-1}$-, $y^j$-, $y^{j+1}$-, $y^{j-1}z$-, $y^jz$- and $y^{j-1}z^2$-terms in $q(z-1,y)$ as imposed in the lower left of Figure~\ref{fig:localf}. The sums above are taken over all terms of $q$ which ``feed'' to the term in question. The valuations determine the positions of the vertices of $\Trop(V(q(z-1,y))$, as in the lower right of Figure~\ref{fig:localf}. By adding the lengths of the edges replacing $E$, it follows that it is enlarged to length $f$.
\end{proof}
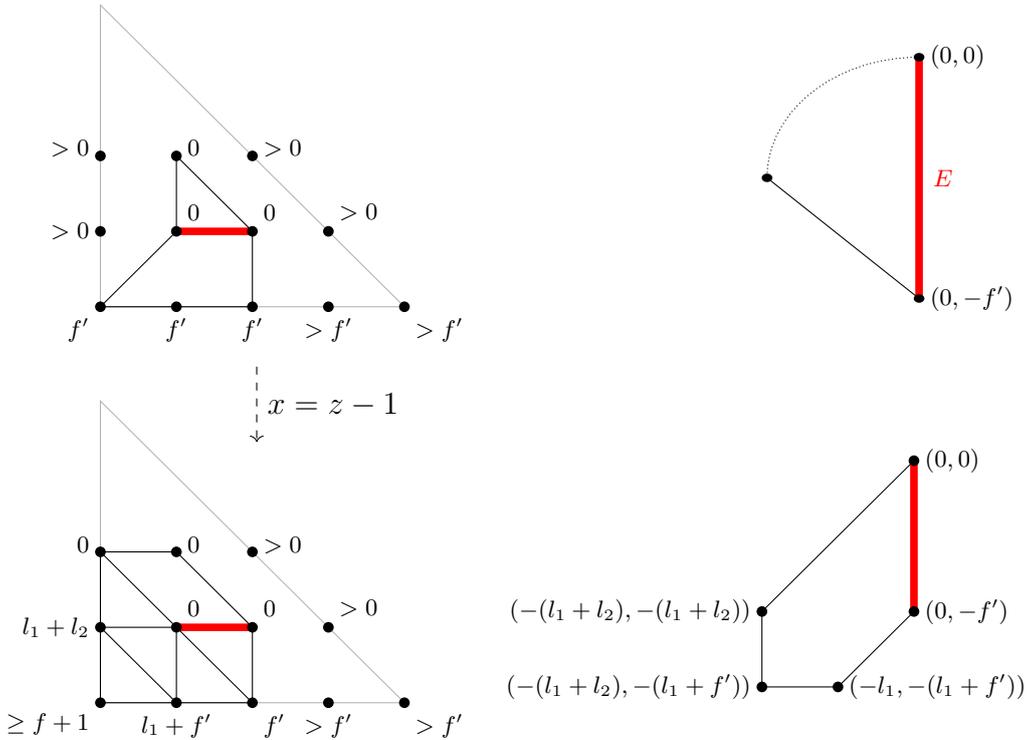
\begin{figure}[h]
  \centering
  \begin{tikzpicture}
    \node[anchor=east] at (0,0)
    {
      \begin{tikzpicture}
        \draw[black!30] (0,0) -- (4,0) -- (0,4) -- (0,0);
        \draw (0,0) -- (2,0);
        \draw (0,0) -- (1,1);
        \draw (1,1) -- (1,2);
        \draw[line width=1mm,edgeRed] (1,1) --  (2,1);
        \draw (2,0) -- (2,1);
        \draw (1,2) -- (2,1);
        \fill (0,0) node[anchor=north east,font=\scriptsize] {$f'$} circle (2pt);
        \fill (1,0) node[anchor=north,font=\scriptsize] {$f'$} circle (2pt);
        \fill (2,0) node[anchor=north,font=\scriptsize] {$f'$} circle (2pt);
        \fill (3,0) node[anchor=north,font=\scriptsize] {$>f'$} circle (2pt);
        \fill (4,0) node[anchor=north west,font=\scriptsize] {$>f'$} circle (2pt);
        \fill (0,1) node[anchor=east,font=\scriptsize] {$>0$} circle (2pt);
        \fill (1,1) node[anchor=south west,font=\scriptsize] {$0$} circle (2pt);
        \fill (2,1) node[anchor=south west,font=\scriptsize] {$0$} circle (2pt);
        \fill (3,1) node[anchor=south west,font=\scriptsize] {$>0$} circle (2pt);
        \fill (0,2) node[anchor=base east,font=\scriptsize] {$>0$} circle (2pt);
        \fill (1,2) node[anchor=base west,font=\scriptsize] {$0$} circle (2pt);
        \fill (2,2) node[anchor=base west,font=\scriptsize] {$>0$} circle (2pt);
      \end{tikzpicture}
    };
    \node[anchor=west] at (3.5,0)
    {
      \begin{tikzpicture}[yscale=0.8,every node/.style={font=\scriptsize}]
        \draw[line width=1mm,edgeRed] (0,0) -- node[right]{$E$} (0,-4);
        \draw (0,-4) -- (-2,-2);
        \draw[densely dotted] (0,0) arc(90:180:2cm);
        \fill (0,0) node[anchor=west] {$(0,0)$} circle (2pt);
        \fill (0,-4) node[anchor=west] {$(0,-f')$} circle (2pt);
        \fill (-2,-2) circle (2pt);
      \end{tikzpicture}
    };
    \node[anchor=east] at (0,-5.25)
    {
      \begin{tikzpicture}[every node/.style={font=\scriptsize}]
        \draw[black!30] (0,0) -- (4,0) -- (0,4) -- (0,0);
        \draw (0,0) -- (0,2);
        \draw (0,2) -- (1,2);
        \draw (0,2) -- (1,1);
        \draw (0,0) -- (2,0);
        \draw (1,0) -- (1,1);
        \draw[line width=1mm,edgeRed] (1,1) -- (2,1);
        \draw (1,1) -- (2,0);
        \draw (2,1) -- (2,0);
        \draw (2,1) -- (1,2);
        \draw (1,1) -- (0,1);
        \draw (0,1) -- (1,0);
        \fill (0,0) node[anchor=north east] {$\geq f+1$} circle (2pt);
        \fill (1,0) node[anchor=north] {$l_1+f'$} circle (2pt);
        \fill (2,0) node[anchor=north west] {$f'$} circle (2pt);
        \fill (3,0) node[anchor=north] {$>f'$} circle (2pt);
        \fill (4,0) node[anchor=north west] {$>f'$} circle (2pt);
        \fill (0,1) node[anchor=east] {$l_1+l_2$} circle (2pt);
        \fill (0,2) node[anchor=base east] {$0$} circle (2pt);
        \fill (1,1) node[anchor=south west] {$0$} circle (2pt);
        \fill (2,1) node[anchor=south west]  {$0$} circle (2pt);
        \fill (3,1) node[anchor=south west] {$>0$} circle (2pt);
        \fill (1,2) node[anchor=base west]  {$0$} circle (2pt);
        \fill (2,2) node[anchor=base west] {$>0$} circle (2pt);
      \end{tikzpicture}
    };
    \node[anchor=west] at (0,-5.25)
    {
      \begin{tikzpicture}[every node/.style={font=\scriptsize}]
        \draw[line width=1mm,edgeRed] (0,0) -- (0,-2);
        \draw (0,-2) -- (-1,-3);
        \draw (-1,-3) -- (-2,-3);
        \draw (-2,-3) -- (-2,-2);
        \draw (-2,-2) -- (0,0);
        \fill (0,0) node[anchor=west] {$(0,0)$} circle (2pt);
        \fill (0,-2) node[anchor=west] {$(0,-f')$} circle (2pt);
        \fill (-1,-3) node[anchor=west] {$(-l_1,-(l_1+f'))$} circle (2pt);
        \fill (-2,-3) node[anchor=east] {$(-(l_1+l_2),-(l_1+f'))$} circle (2pt);east
        \fill (-2,-2) node[anchor=east] {$(-(l_1+l_2),-(l_1+l_2))$} circle (2pt);
      \end{tikzpicture}
    };
    \draw[->,dashed] (-3,-2.5) -- node[right] {$x=z-1$} ++(0,-1);
  \end{tikzpicture}\vspace{-7mm}
  \caption{Enlarging the shorter edge of a trapezoid.}
  \label{fig:localf}
\end{figure}
\begin{remark}
Note that the conditions in Lemma~\ref{lem:unfoldf} can only be fulfilled if there is cancellation behaviour, which requires more than one summand of the same valuation. These equations involving the valuations of the coefficients then imply that there is a trapezoid in the dual subdivision, as in Figure \ref{fig:threecases}.
\end{remark}

The remaining cases represented on Figure~\ref{fig:threecases} are obtained from cases (1) and (2) by an automorphism of the lattice, hence follow formally from Lemma~\ref{lem:unfoldf}. Thus, we state them without a proof:

\begin{lemma}\label{lem:unfolde}
  Let $q:=\sum a_{mn}x^my^n\in K[x,y]$ be a quartic polynomial whose Newton subdivision contains a trapezoid with a simplex adjacent to its shorter edge as in Figure~\ref{fig:threecases} (4) or (5).
Let $e'$ be the length of the edge $E$ of $\Trop(V(q))$ dual to the blue edge, and assume $E$ is contained in the line $\{y=l\}$. Let $e>e'$ and suppose the coefficients $a_{mn}$ of $q$ satisfy the following conditions for some $l_1, l_2$ with $l_2<e'$ and $e=2l_1+l_2+e'$:
  \begin{align*}
  & \valuation \Big( \sum_{k\geq 0} (-t^{-l})^k a_{i-1,k}\Big)\geq e+1,&& \valuation\Big( \sum_{k\geq 0} (-t^{-l})^k a_{i,k}\Big)=-l+l_1+l_2,\\
  & \valuation \Big(\sum_{k\geq 0} (-t^{-l})^k a_{i+1,k}\Big)=-2l,&& \valuation \Big( \sum_{k\geq 0} (-t^{-l})^{k-1}\cdot k\cdot a_{i-1,k}\Big)=l+l_1+e',\\
  &\valuation\Big(\sum_{k\geq 0} (-t^{-l})^{k-1} \cdot k \cdot a_{i,k}\Big)=0,&& \valuation\Big( \sum_{k\geq 0} (-t^{-l})^{k-2} \cdot \textstyle\binom{k}{2}\cdot a_{i-1,k}\Big)=e'+2l.
\end{align*}
 Then adding the equation $y=z-t^{-l}$ produces an ideal $I$ such that in $\Trop(V(I))$, the edge $E$ is enlarged to length $e$.

\end{lemma}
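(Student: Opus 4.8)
The plan is to deduce Lemma~\ref{lem:unfolde} from Lemma~\ref{lem:unfoldf} by the lattice automorphism that interchanges the two coordinate directions, which is exactly the automorphism carrying configurations (4) and (5) of Figure~\ref{fig:threecases} to configurations (1) and (2). Concretely, I would realize it by the monomial change of coordinates $\sigma\colon(x,y)\mapsto(y,x)$ on the torus $(K^\ast)^2$ and set $\tilde q:=q(y,x)=\sum a_{mn}x^ny^m$, so that the coefficient of $x^py^r$ in $\tilde q$ equals $a_{rp}$. On tropicalizations $\sigma$ induces the reflection $(X,Y)\mapsto(Y,X)$, hence it sends the Newton subdivision of $\Trop(V(q))$ to that of $\Trop(V(\tilde q))$, the blue trapezoid of type (4)/(5) to the red trapezoid of type (1)/(2), the edge $E\subseteq\{y=l\}$ to an edge $\tilde E\subseteq\{x=l\}$ of the same lattice length, and the modification $y=z-t^{-l}$ to $x=z-t^{-l}$. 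Thus the whole setup of Lemma~\ref{lem:unfolde} is transported to that of Lemma~\ref{lem:unfoldf}, with the special vertex $(i,j)$ becoming $(j,i)$ and the three relevant columns $x^{i-1},x^{i},x^{i+1}$ becoming the three consecutive rows to which Lemma~\ref{lem:unfoldf} applies.

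The heart of the proof is to verify that the six valuation hypotheses match under $\sigma$. Because $\sigma$ merely interchanges the two exponents, each sum $\sum_{k\ge0}(-t^{-l})^k a_{i+s,k}$ appearing in Lemma~\ref{lem:unfolde} (for $s\in\{-1,0,1\}$) is identical, coefficient by coefficient, to the sum $\sum_{k\ge0}(-t^{-l})^k\tilde a_{k,\,i+s}$ that Lemma~\ref{lem:unfoldf} imposes on $\tilde q(z-t^{-l},y)$; the same holds for the sums weighted by $k$ and by $\binom{k}{2}$, which control the coefficients of $z^1$ and $z^2$. Matching these term by term, the conditions on the $x^{i-1}$-, $x^{i}$-, $x^{i+1}$-columns of $q$ turn into exactly the conditions on the corresponding rows of $\tilde q$. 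The one point needing genuine care is the right-hand side of the third condition when $l\neq0$: here one must track the power of $t$ that the feeding attaches to the surviving monomial $a_{i+1,j}$ of that column (its valuation being pinned down by the position of the tropical vertex at the end of $E$, so that the three simplex coefficients $a_{i,j},a_{i,j+1},a_{i+1,j}$ acquire valuations $jl,(j+1)l,jl$), and check that this feeds back the stated value. This is the only place where the two lemmas are not symmetric on the nose.

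Having matched the hypotheses, Lemma~\ref{lem:unfoldf} applies to $\tilde q$ together with the equation $x=z-t^{-l}$ and shows that $\tilde E$ is enlarged to length $e$ in the corresponding re-embedding. I would then transport this conclusion back through $\sigma^{-1}$: since $\sigma$ is given by a matrix in $\mathrm{GL}_2(\ZZ)$ it preserves the combinatorial type of the tropical curve and the lattice lengths of all its edges, so the preimage of $\tilde E$ is again an edge of length $e$, namely the enlargement of the original edge $E$. This yields the assertion of Lemma~\ref{lem:unfolde}.

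I expect the main obstacle to be precisely this matching of the valuation conditions: although the reduction is conceptually immediate, one must confirm that normalizing the modification to the line $\{x=l\}$ feeds the correct powers of $t$ into each coefficient, so that all six conditions --- and in particular the slightly asymmetric third one --- come out as stated. Everything else is formal, being preserved by the unimodular change of coordinates $\sigma$.
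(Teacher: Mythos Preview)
Your proposal is correct and takes essentially the same approach as the paper: the paper does not give a separate proof of Lemma~\ref{lem:unfolde} at all, stating only that cases (4) and (5) are obtained from cases (1) and (2) by a lattice automorphism and hence follow formally from Lemma~\ref{lem:unfoldf}. Your explicit choice of $\sigma\colon(x,y)\mapsto(y,x)$ is precisely this automorphism, and your observation about the third condition not matching on the nose (the $0$ versus $-2l$) is a fair point about the stated normalizations that the paper glosses over.
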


\begin{lemma}\label{lem:unfoldd}
  Let $q:=\sum a_{mn}x^my^n\in K[x,y]$ be a polynomial whose Newton subdivision contains a trapezoid with a simplex adjacent to its shorter edge as in Figure~\ref{fig:threecases} (3). Let $d'$ be the length of the edge $E$ of $\Trop(V(q))$ dual to the green edge, and assume $E$ is contained in the line $\{y=x+l\}$. Let $d>d'$ and suppose the coefficients $a_{mn}$ of $q$ satisfy the following conditions for some $l_1, l_2$ with $l_2<d'$ and $d=2l_1+l_2+d'$:
  \begin{align*}
  & \valuation\Big( \sum_{k\geq 0} (-t^{-l})^k a_{i+j-1-k,k}\Big) =-2l,&& \valuation \Big(\sum_{k\geq 0} (-t^{-l})^k a_{i+j-k,k}\Big)=-l+l_1+l_2,\\
  & \valuation\Big(\sum_{k\geq 0}(-t^{-l})^k a_{i+j+1-k,k}\Big)\geq d+1,&& \valuation\Big( \sum_{k\geq 0} (-t^{-l})^{k-2} \textstyle\binom{k}{2} a_{i+j+1-k,k}\Big)=2l+d',\\
  &\valuation \Big( \sum_{k\geq 0}(-t^{-l})^{k-1}\cdot k\cdot a_{i+j-k,k}\Big)=0,&& \valuation \Big(\sum_{k\geq 0} (-t^{-l})^{k-1} k a_{i+j+1-k,k}\Big)=l+l_1+d' .
\end{align*}
  Then adding the equation $y=z-t^{-l}\cdot x$ produces an ideal $I$ such that in $\Trop(V(I))$, the edge $E$ is enlarged to length $d$.

\end{lemma}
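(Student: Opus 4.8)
The plan is to deduce Lemma~\ref{lem:unfoldd} from Lemma~\ref{lem:unfoldf} by a monomial change of coordinates, exactly in the spirit in which the cases (4)--(5) of Figure~\ref{fig:threecases} were reduced to (1)--(2). The green edge dual to $E$ points in direction $(1,-1)$ and the modification $y=z-t^{-l}x$ is ``diagonal'', so I would straighten both simultaneously by the lattice automorphism $(m,n)\mapsto(n,-m-n)$ of $\ZZ^2$ (determinant $1$), realized on the torus $(K^{\ast})^2$ by the monomial substitution $x=1/Y$, $y=X/Y$ (equivalently $X=y/x$, $Y=1/x$), together with $z=Y^{-1}\zeta$ on the modification variable. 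Under this substitution the lift $\ell=y+t^{-l}x$ becomes $Y^{-1}(X+t^{-l})$, so modulo the monomial unit $Y^{-1}$ the equation $y=z-t^{-l}x$ turns into the axis modification $X=\zeta-t^{-l}$ treated in Lemma~\ref{lem:unfoldf}; and $\hat q:=Y^4\,q(1/Y,X/Y)$ is a quartic whose Newton subdivision carries the green trapezoid of Figure~\ref{fig:threecases}~(3) onto the red trapezoid of Figure~\ref{fig:threecases}~(1), sending the green edge to the red edge. Since monomial coordinate changes act on $\RR^2$ (resp.\ $\RR^3$) through the induced lattice isomorphism, they preserve the lattice lengths of edges; in particular $E\subseteq\{y=x+l\}$ is carried to an edge in a line $\{X=\text{const}\}$, and an enlargement of the one is an enlargement of the other by the same amount.

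Next I would check that the two sets of hypotheses match. A direct expansion gives $\hat a_{p,s}=a_{4-p-s,\,p}$, so fixing the $Y$-exponent $s$ is the same as fixing the anti-diagonal $m+n=4-s$, while the summation over the modified-variable exponent $k$ is common to both pictures. Consequently the column feeding sums $\sum_k\binom{k}{r}(-t^{-l})^{k-r}\hat a_{k,s}$ that enter Lemma~\ref{lem:unfoldf} coincide with the anti-diagonal feeding sums $\sum_k\binom{k}{r}(-t^{-l})^{k-r}a_{(4-s)-k,\,k}$ that enter Lemma~\ref{lem:unfoldd}. The three columns $s=j'-1,j',j'+1$ relevant for $\hat q$ correspond to the three anti-diagonals $m+n=i+j+1,\,i+j,\,i+j-1$, so the six feeding conditions of Lemma~\ref{lem:unfoldf} translate into six feeding conditions on the $a_{mn}$, with $d,d'$ taking the roles of $f,f'$ and with the same $l_1,l_2$, which I would identify with those in the statement of Lemma~\ref{lem:unfoldd}.

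Finally I would apply Lemma~\ref{lem:unfoldf} to $\hat q$: adding $X=\zeta-t^{-l}$ enlarges the image of $E$ to length $d$, and pulling back by the length-preserving inverse substitution shows that adding $y=z-t^{-l}x$ enlarges $E$ in $\Trop(V(I))$ to length $d$, as claimed. I expect the main obstacle to be precisely the bookkeeping in the matching of conditions: one must verify that, after the reflection built into $(m,n)\mapsto(n,-m-n)$, the $z^0$-, $z^1$- and $z^2$-conditions land on the correct anti-diagonals and, crucially, that converting the line $\{y=x+l\}$ into $\{X=\text{const}\}$ reproduces exactly the $l$-dependent right-hand sides $-2l$, $2l+d'$, $l+l_1+d'$, and so forth; reproducing these constants will in general require composing the lattice automorphism with a torus rescaling $x\mapsto t^a x$, $y\mapsto t^b y$. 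Should transporting the conditions prove delicate, the safe alternative is to prove the lemma directly, repeating the local computation of Lemma~\ref{lem:unfoldf} with the diagonal substitution $y=z-t^{-l}x$: expanding $q(x,z-t^{-l}x)$, reading off the six coefficients of the monomials $x^{i+j-1}z^0,x^{i+j}z^0,x^{i+j+1}z^0$ and $x^{i+j-1}z^1,x^{i+j}z^1,x^{i+j-1}z^2$, placing the vertices of the local curve from the prescribed valuations, and summing the lengths of the edges replacing $E$ to obtain $d$.
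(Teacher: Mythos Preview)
Your proposal is correct and follows exactly the route the paper takes: the paper states immediately before Lemma~\ref{lem:unfolde} that the remaining cases of Figure~\ref{fig:threecases} ``are obtained from cases (1) and (2) by an automorphism of the lattice, hence follow formally from Lemma~\ref{lem:unfoldf}'', and gives no further proof. Your explicit choice of automorphism $(m,n)\mapsto(n,4-m-n)$ via $\hat q=Y^4q(1/Y,X/Y)$, together with the rescaling $z=Y^{-1}\zeta$ turning $y=z-t^{-l}x$ into $X=\zeta-t^{-l}$, is precisely what the paper leaves implicit, and your caveat about possibly needing a torus rescaling to match the $l$-dependent constants on the right-hand sides is well placed (indeed, compare the $0$ versus $-2l$ on the ``apex'' row between Lemmas~\ref{lem:unfoldf} and~\ref{lem:unfolde}).
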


\subsubsection{Unfolding lollipops from weight $2$ edges}
Let $q \in K[x,y]$ be a quartic polynomial whose Newton subdivision contains a simplex of width $2$ as in Figure~\ref{fig:unfoldlollis}. We introduce conditions under which the corresponding bounded weight $2$ edges hide a lolli of prescribed edge and cycle length. Since the three cases differ only by an automorphism of the lattice, we consider only case (1).


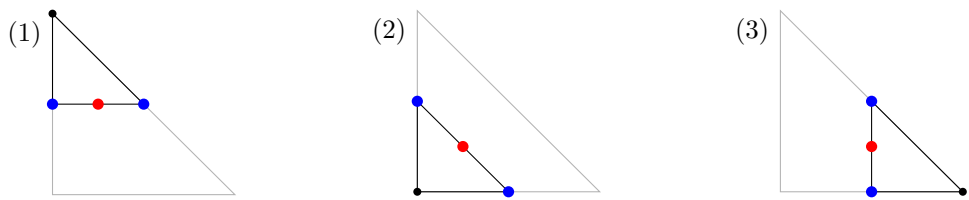
\begin{figure}[ht]
  \centering
  \begin{minipage}{0.32\linewidth}
    \centering
    \begin{tikzpicture}[scale=0.6]
      \draw[black!30] (0,4) -- (0,0) -- (4,0) -- cycle;
      \node[anchor=north east,font=\footnotesize,yshift=0.5mm] at (0,4) {(1)};
      \draw (0,2) -- (2,2) -- (0,4) -- cycle;
      \fill[blue] (0,2) circle (3.5pt);
      \fill[blue] (2,2) circle (3.5pt);
      \fill (0,4) circle (2.5pt);
      \fill[red] (1,2) circle (3.5pt);
    \end{tikzpicture}
  \end{minipage}%
  \begin{minipage}{0.32\linewidth}
    \centering
    \begin{tikzpicture}[scale=0.6]
      \draw[black!30] (0,4) -- (0,0) -- (4,0) -- cycle;
      \node[anchor=north east,font=\footnotesize,yshift=0.5mm] at (0,4) {(2)};
      \draw (0,0) -- (2,0) -- (0,2) -- cycle;
      \fill (0,0) circle (2.5pt);
      \fill[blue] (2,0) circle (3.5pt);
      \fill[blue] (0,2) circle (3.5pt);
      \fill[red] (1,1) circle (3.5pt);
    \end{tikzpicture}
  \end{minipage}%
  \begin{minipage}{0.32\linewidth}
    \centering
    \begin{tikzpicture}[scale=0.6]
      \draw[black!30] (0,4) -- (0,0) -- (4,0) -- cycle;
      \node[anchor=north east,font=\footnotesize,yshift=0.5mm] at (0,4) {(3)};
      \draw (2,0) -- (2,2) -- (4,0) -- cycle;
      \fill[blue] (2,0) circle (3.5pt);
      \fill[blue] (2,2) circle (3.5pt);
      \fill (4,0) circle (2.5pt);
      \fill[red] (2,1) circle (3.5pt);
    \end{tikzpicture}
  \end{minipage}%
  \caption{Subdivisions containing large simplices.}
  \label{fig:unfoldlollis}
\end{figure}

Let $V$ be the vertex dual to the simplex and let $E$ be the weight $2$ edge of $\Trop(V(q))$ dual to $E^\vee:=\conv((0,2),(2,2))$. Denote by $V'$ the other vertex of $E$ (see Figure~\ref{fig:lolli} top).
Without restriction, we can assume that the vertex $V'$ is at $(0,0)$, that the valuations of the coefficients of $y^2$, $xy^2$ and $x^2y^2$ are $0$, and that the coefficient of $x^2y^2$ is $1$.

\begin{lemma}\label{prop-unfoldlollis}
For a tropicalized quartic as above, assume that
\begin{enumerate}[leftmargin=*]
\item \label{prop:unfoldlolliesItem1} $q$ restricted to $E^\vee$ is the square of $y$ times a linear form $L$ in $x$ modulo $\left(t^{2a}\right)$,
\item \label{prop:unfoldlolliesItem2} the $t^{2a}$-contribution of $q|_{E^\vee}$ is not divisible by the $t^{0}$-contribution of $L$,
\item \label{prop:unfoldlolliesItem3} the length of $E$  is $3a+\frac{b}{2} $,
\item \label{prop:unfoldlolliesItem4} the sum of the $t^{0}$-terms of $q$ corresponding to monomials $y\cdot x^i$ for some $i$ is not divisible by the $t^0$-contribution of $L$.
\end{enumerate}
Then we can use one linear modification to unfold a lolli with edge length $a$ and cycle length $b$ from the edge $E$ (see Figure~\ref{fig:lolli} bottom right). 
\end{lemma}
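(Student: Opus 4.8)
The plan is to carry out the single linear modification dictated by condition~\ref{prop:unfoldlolliesItem1} and then to read the lollipop off the Newton subdivision produced by the feeding process of Section~\ref{sec-modi}, in the same spirit as the proof of Lemma~\ref{lem:unfoldf}. Write $L=x-r$ with $\valuation(r)=0$, so that by condition~\ref{prop:unfoldlolliesItem1} the restriction $q|_{E^\vee}$ equals $y^2L^2=(yL)^2$ modulo $t^{2a}$. In particular the initial form $\initial_E(q)=(yL)^2$ along $E$ is a non-reduced double line, which is exactly why $E$ carries weight $2$ and why $\Trop(V(q))$ fails to be faithful there. I would therefore modify $\RR^2$ along the tropical line $\trop(L)=\max\{0,X\}$, that is, add the equation $z=\ell$ for a lift $\ell$ of $L$, and study the second projection $\Trop(V(\tilde{q}))$ with $\tilde{q}(y,z)=q(z+r,y)$, the generator of the elimination ideal $\langle q,z-\ell\rangle\cap K[y,z]$. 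Separating the double line $yL=0$ along this new coordinate is what exposes the hidden cycle.

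Next I would determine the valuations of the coefficients of $\tilde{q}$ by the feeding process: the coefficient of $z^jy^n$ is $\sum_{k\geq j}\binom{k}{j}r^{k-j}a_{kn}$. Condition~\ref{prop:unfoldlolliesItem1} then forces the coefficients of $y^2$ and of $zy^2$ to cancel to order $t^{2a}$ (these being the constant and linear terms of $L^2$ evaluated at the double root), while condition~\ref{prop:unfoldlolliesItem2} guarantees that the cancellation in the $y^2$-coefficient stops \textit{exactly} at order $t^{2a}$ and not deeper. Condition~\ref{prop:unfoldlolliesItem4} plays the analogous role one row lower: it fixes the valuation of the relevant $y^1$-coefficient of $\tilde{q}$ and thereby pins down the vertex at which the stick of the lollipop meets the remaining curve. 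Together these valuations determine the lower hull, hence the Newton subdivision dual to $\Trop(V(\tilde{q}))$ over the region of the width-$2$ simplex of Figure~\ref{fig:unfoldlollis}.

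Finally I would read the metric off this subdivision. The order-$2a$ cancellation opens a bounded cell dual to a cycle, and tracing the dual edges yields a stick of length $a$ together with a cycle whose length depends on $a$ (through the order-$2a$ cancellation) and on $\length(E)$ (through the position of $V'$). Condition~\ref{prop:unfoldlolliesItem3}, $\length(E)=3a+\tfrac{b}{2}$, is precisely the numerical compatibility identifying this cycle length with the prescribed $b$, so that the computation of the vertex positions reproduces exactly the lollipop of Figure~\ref{fig:lolli}. The remaining cases (2) and (3) of Figure~\ref{fig:unfoldlollis} follow from case (1) by an automorphism of the lattice, as remarked before the statement.

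The hard part will be the two-dimensional bookkeeping: one must check that the forced cancellations open $E$ into exactly one cycle with a single stick, rather than into some other degenerate configuration, and that every relevant coefficient valuation is attained exactly rather than merely bounded below. Here conditions~\ref{prop:unfoldlolliesItem2} and~\ref{prop:unfoldlolliesItem4}, the two non-divisibilities by the $t^0$-contribution of $L$, are exactly the genericity hypotheses that rule out further cancellation and hence force the cycle and the stick to attain their full prescribed lengths; keeping track of precisely where each of these hypotheses enters the determination of the vertex positions is the most delicate point of the argument.
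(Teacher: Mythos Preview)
Your overall strategy---separate the double line $yL=0$ by a linear modification and read the lollipop from the feeding process---matches the paper, but the single modification you write down does not actually produce the cycle. Substituting $x=z+r$ (so $\ell=L$ itself) gives, as you say, valuation $2a$ for the $y^2$-coefficient and valuation $\geq 2a$ for the $zy^2$-coefficient of $\tilde q$, while $z^2y^2$ stays at valuation $0$. Since $2a>a$, the lattice point $(1,2)$ sits strictly above the segment joining the lifts of $(0,2)$ and $(2,2)$, so the Newton subdivision of $\tilde q$ still contains the undivided width-$2$ edge $\conv\{(0,2),(2,2)\}$. Dually, $\Trop(V(\tilde q))$ consists of a stick of length $a$ from $V'$ to a new vertex at $(-a,2a)$, followed by \textit{another} bounded weight-$2$ edge of length $b/2$---not a cycle. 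Your sentence ``the order-$2a$ cancellation opens a bounded cell dual to a cycle'' is precisely where the argument breaks.

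The paper repairs this with a second step. Writing $q|_{E^\vee}=(y(x+A))^2+t^{2a}y^2(\alpha x+\beta)+O(t^{2a+1})$, the residual weight-$2$ edge after the first modification has initial form $y^2z_1^2+t^{2a}(\beta-\alpha A_0)y^2$. Condition~(\ref{prop:unfoldlolliesItem2}) says exactly that $\beta-\alpha A_0\neq 0$, so this factors as $y^2(z_1-t^a\gamma)(z_1+t^a\gamma)$ with $\gamma=\sqrt{\beta-\alpha A_0}$ and is \textit{not} a square; hence \cite[Theorem~3.4]{CM14} applies and a further modification $z_2=z_1-t^a\gamma$ opens this weight-$2$ edge into a cycle of length $2\cdot\tfrac{b}{2}=b$. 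The two steps then combine into the single modification $z=x+A-t^a\gamma$, which differs from your lift by a specific correction of valuation $a$. Thus condition~(\ref{prop:unfoldlolliesItem2}) does more than halt the $y^2$-cancellation at order $2a$: it guarantees the residual initial form is non-square, which is what allows the cycle to be exposed. Your write-up needs either this extra $t^a$-term in the lift or an explicit appeal to \cite[Theorem~3.4]{CM14} after the first step.
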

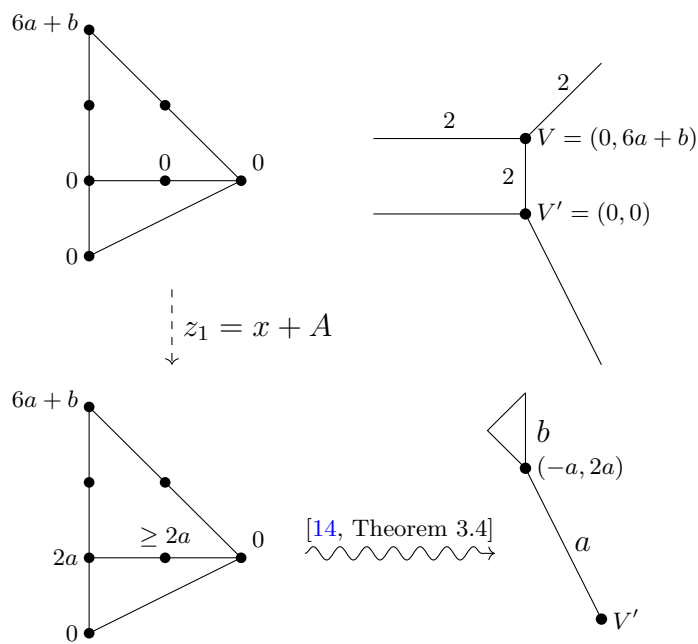
\begin{figure}[ht]
  \begin{center}
    \begin{tikzpicture}[decoration=snake]
      \node[anchor=east] at (0,0)
      {
        \begin{tikzpicture}
          \draw  (2,2)-- (0,4) -- (0,1);
          \draw (0,1) -- (2,2) -- (0,2);
          \fill (0,4) node [anchor=base east,font=\scriptsize] {$6a+b$} circle (2pt);
          \fill (0,3) circle (2pt);
          \fill (1,3) circle (2pt);
          \fill (0,2) node [anchor=east,font=\scriptsize] {$0$} circle (2pt);
          \fill (1,2) node [anchor=south,font=\scriptsize] {$0$} circle (2pt);
          \fill (2,2) node [anchor=south west,font=\scriptsize] {$0$} circle (2pt);
          \fill (0,1) node [anchor=east,font=\scriptsize] {$0$} circle (2pt);
        \end{tikzpicture}
      };
      \node[anchor=west] at (1,-1)
      {
        \begin{tikzpicture}
          \draw (0,0) -- (-1,2);
          \draw (-1,2) --  (-3,2);
          \draw (-1,2) -- node[left,font=\scriptsize] {$2$} (-1,3);
          \draw (-1,3) --  node[above,font=\scriptsize] {$2$} (-3,3);
          \draw (-1,3) -- node[above,font=\scriptsize] {$2$} (0,4);
          \fill 
          (-1,2) circle (0.075cm) node[right,font=\scriptsize] {$V' = (0,0)$}
          (-1,3) circle (0.075cm) node[right,font=\scriptsize] {$V = (0,6a+b)$} ;
        \end{tikzpicture}
      };
      \draw[->,dashed] (-1.5,-2) -- node[right] {$z_1=x+A$} (-1.5,-3);
      \node[anchor=east] at (0,-5)
      {
        \begin{tikzpicture}
          \draw (2,2)-- (0,4) -- (0,1);
          \draw (0,1) -- (2,2) -- (0,2);
          \fill (0,4) node [anchor=base east,font=\scriptsize] {$6a+b$} circle (2pt);
          \fill (0,3) circle (2pt);
          \fill (1,3) circle (2pt);
          \fill (0,2) node [anchor=east,font=\scriptsize] {$2a$} circle (2pt);
          \fill (1,2) node [anchor=south,font=\scriptsize] {$\geq 2a$} circle (2pt);
          \fill (2,2) node [anchor=south west,font=\scriptsize] {$0$} circle (2pt);
          \fill (0,1) node [anchor=east,font=\scriptsize] {$0$} circle (2pt);
        \end{tikzpicture}
      };
      \draw[->,decorate] (0.25,-5.5) -- node[above,font=\scriptsize] {{\cite[Theorem 3.4]{CM14}}}(2.75,-5.5);
      \node[anchor=west] at (2.5,-5)
      {
        \begin{tikzpicture}
          \draw (0,0) node[anchor=west,font=\scriptsize] {$V'$} -- node[right] {$a$} (-1,2) node[anchor=west,font=\scriptsize] {$(-a,2a)$};
          \draw (-1,2) -- node[right] {$b$} (-1,3);
          \draw (-1,2) -- (-1.5,2.5)-- (-1,3);
          \fill (0,0) circle (0.075cm)
          (-1,2) circle (0.075cm);
        \end{tikzpicture}
      };
    \end{tikzpicture}
  \end{center}\vspace{-0.5cm}
  \caption{Unfolding a lolli from weight 2 edges.}
  \label{fig:lolli}
\end{figure}
\begin{proof}
First, we unfold a bridge edge and a loop in two steps. Then we combine the two steps to one.
By condition (1), $q$ restricted to $E^\vee$ equals
$$(y\cdot (x+A))^2+  t^{2a}\cdot y^2 \cdot ( \alpha x+\beta )+O(t^{2a+1}),$$
where $A=A_0+A_1t+A_2t^2+\ldots+A_{2a-1}t^{2a-1}\in K$ is of valuation $0$, and  $\alpha,\beta \in\mathbb{C}$   satisfy either $\alpha=0, \beta \neq 0$ or $0\neq \frac{\beta}{\alpha}\neq A_0$ by condition (2).
By assumption (3), $V$ is at $(0,6a+b)$, and the valuation of the $y^4$-term is $6a+b$.

We start by adding the equation $z_1=x+A$. As described in Section~\ref{sec-modi}, we can make the new part of $\Trop(V(q, z_1-x-A))$ visible using the projection $\pi_{yz_1}$ defined by $\tilde q:=q(z_1-A,y)$.

Condition (4) ensures that there is no cancellation in $\tilde q$ involving the terms $y\cdot x^i$. Thus, the $y$-term of $\tilde q$ has valuation $0$.
For the $y^2$- and $z_1y^2$-term however, there is cancellation. The respective terms of $\tilde q$ are:
$$(y\cdot z_1)^2+  t^{2a}\cdot y^2 \cdot ( \alpha (z_1-A)+\beta )+O(t^{2a+1}),$$

By condition (2), the $y^2$-term has valuation $2a$, and the $y^2z_1$ term has valuation bigger or equal to $2a$.

The Newton subdivision of newly attached part is depicted below in the lower right of Figure~\ref{fig:lolli}. Notice that the bounded edge of weight $2$ is dual to the edge with initial $y^2z_1^2+t^{2a}\cdot(-A_0\alpha+\beta)\cdot y^2= y^2\cdot (z_1 - t^a \cdot \sqrt{-A_0\alpha+\beta}) (z_1+t^a\cdot \sqrt{-A_0\alpha+\beta})$. As this is not a square, it follows from \cite[Theorem~3.4]{CM14} that we can use the linear modification induced by the equation $z_2= z_1 - t^a\cdot \sqrt{-A_0\alpha+\beta}$ to unfold a cycle of length $2\cdot \frac{b}{2}=b$. This cycle is attached to the vertex $(0,0)$ via an edge of length $a$.

The variable $z_1$ is not needed to produce a faithful tropicalization, we can eliminate it and combine the two steps to one.
That is, we only add one equation, namely $z_2= x+A-t^a\cdot \sqrt{-A_0\alpha+\beta}$.
\end{proof}

\subsection{Proof of Theorem~\ref{thm-main}}\label{subsec-constructiveproof}

In this section we present a constructive proof of Theorem~\ref{thm-main}.
Examples for all constructions can be found on \url{https://software.mis.mpg.de}.

\begin{proposition}\label{prop:nulltwonull}
  Let $\Gamma$ be a maximal {\nrh} tropical curve of type $\tnulltwonull{}$.
  Then there exists a plane quartic curve $C\subseteq K^3$ whose tropicalization $\Trop(C)$ is faithful and satisfies $\ft^{\trop}(\Trop(C))=\Gamma$.
\end{proposition}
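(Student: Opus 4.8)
The plan is to exhibit an explicit plane quartic $q\in K[x,y]$ whose tropicalization $\Trop(V(q))\subset\RR^2$ is a genus-$1$ curve carrying two parallel weight-$2$ edges, and then to unfold both of them into bigons by a \emph{single} linear modification, so that $I=\langle q,\, z-x-A\rangle$ defines the required $C\subseteq K^3$. Concretely, I would choose a Newton subdivision of the degree-$4$ simplex containing two parallel lattice segments of length $2$, arranged so that $\Trop(V(q))$ is a four-cycle on vertices $u_1,u_2,v_2,v_1$ (with four unbounded ends making it balanced) whose two opposite edges $u_1u_2$ and $v_1v_2$ are the weight-$2$ edges dual to those segments, while the remaining two edges $u_2v_2$ and $u_1v_1$ have weight $1$. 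Unfolding each weight-$2$ edge into a bigon between its two endpoints turns this four-cycle into a curve of the shape \tnulltwonull{}: the two bigons are the left and right lenses, the former weight-$1$ edges $u_2v_2$ and $u_1v_1$ are the top and bottom connecting edges, and the original four-cycle survives as the outer cycle, so that $b_1=3$ as required.

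The non-rh hypothesis enters directly. As recalled in the preliminaries, for this combinatorial type hyperellipticity (equivalently, realizable hyperellipticity, by \cite{ABB}) is equivalent to the two edges of the unique $2$-edge cut having equal length; that cut is exactly $\{u_2v_2,\,u_1v_1\}$. I would therefore fix the coefficients of $q$ so that these two weight-$1$ edges already carry the two prescribed \emph{distinct} generic lengths in $\RR^2$, while the two cycle lengths of $\Gamma$ are produced by the modification.

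The technical heart is to open \emph{both} weight-$2$ edges with one linear form. Following \cite[Theorem~3.4]{CM14} (cf.\ the valuation bookkeeping of Lemma~\ref{prop-unfoldlollis}), a weight-$2$ edge whose edge-initial is $y^2$ times a quadratic in $x$ that is \emph{not} a square unfolds, under a modification $z=x+A$, into a bounded cycle whose length is governed by the $t$-adic order at which the two roots of that quadratic separate. I would arrange the coefficients of $q$ so that the $x$-initials along the two weight-$2$ edges share the same leading linear factor $x+A_0$ (with $A_0$ of valuation $0$); then the single substitution $x\mapsto z-A$, with $A=A_0+A_1t+\cdots$, triggers the required cancellation at both edges at once. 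Since the subleading coefficients of $A$ and of $q$ feed the two edges through different monomials, I expect to be able to decouple the two cycle lengths and set them to the values prescribed by $\Gamma$.

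Finally I would check faithfulness and the forgetful image. After the modification every bounded edge has weight $1$, and the non-square condition guarantees that the initial degeneration at each vertex is irreducible; by \cite[Theorem~5.24]{BPR1} all tropical multiplicities are then $1$, so $\Trop(C)$ is faithful and the parameterizing map is an isometry on the minimal skeleton. Each bigon corner $u_i,v_j$ carries one unbounded end, so contracting the four ends and the resulting genus-$0$ two-valent vertices yields the trivalent curve $\ft^{\trop}(\Trop(C))=\Gamma$. The step I expect to be hardest is exactly this simultaneous unfolding: producing one linear form that opens both parallel weight-$2$ edges into bigons of independently prescribed lengths while keeping the two cut edges at their distinct target lengths and all multiplicities equal to one. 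Everything else reduces to solving the resulting system of valuation conditions on the coefficients $a_{mn}$ of $q$, which I expect to be triangular and hence solvable.
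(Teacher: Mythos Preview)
Your proposal has a genuine geometric obstruction at exactly the step you flag as hardest. A linear modification of $\RR^2$ attaches its new two-cell along a single line (the bend locus of $L$), and only the parts of $\Trop(V(q))$ lying on that line can acquire new structure in the attached cell. This is precisely how Lemma~\ref{prop-unfoldlollis} and \cite[Theorem~3.4]{CM14} operate: the weight-$2$ edge being unfolded sits on the bend line of the modification. But two opposite edges of an embedded four-cycle in $\RR^2$ cannot both lie on a common line: if your parallel weight-$2$ edges $u_1u_2$ and $v_1v_2$ were both contained in $\{X=-\valuation(A)\}$, all four vertices would be collinear and the cycle would collapse. Hence a single re-embedding $z=x+A$ unfolds at most one of them; the other remains a weight-$2$ edge in $\Trop(C)\subset\RR^3$, the tropicalization is not faithful, and $\ft^{\trop}$ returns at best a genus-$2$ skeleton rather than $\Gamma$. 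The shared-root condition you impose on the edge initials does not rescue this: cancellation in $q(z-A,y)$ at a point away from the bend line does not produce a new cell in the modified plane, because there is no attached cell there for the curve to move into.

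The paper proceeds quite differently. It starts from a plane quartic whose tropicalization in $\RR^2$ \emph{already} has combinatorial type \tnulltwonull{} with all six edges of weight~$1$ (Figures~\ref{fig:NeuesBild1}--\ref{fig:NeuesBild2}), but with one or two of the edge lengths forced too short by the constraints of \cite[Theorem~5.1]{BJMS14}. It then applies the trapezoid technique of Lemmas~\ref{lem:unfoldf}--\ref{lem:unfolde} (not cycle-unfolding) to \emph{enlarge} those too-short weight-$1$ edges to their target values. The non-rh hypothesis $c<d$ is exactly what makes the initial $\RR^2$ picture available: the auxiliary length $e'$ is chosen so that $c+e'=d$, which needs $c<d$. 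In the worst case two edges must be enlarged and two independent modifications are used, so $C$ in fact lands in $K^4$ there.
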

\begin{proof}
  Let $a,b,c,d,e,f$ be the edge lengths of a tropical curve $\Gamma$ as in Figure~\ref{fig:type020}. Due to symmetry, we may assume $a\leq b$, $c\leq d$, $e\leq f$ and $a\leq e$.
  Since $\Gamma$ is {\nrh}, we have $c<d$.
  We distinguish the following cases:
  \begin{enumerate}
  \item $c+\max\{a,e\}<d,$
  \item $c+e\geq d \text{ and } c+a< d,$
  \item $c+e\geq d \text{ and } c+a\geq d.$
  \end{enumerate}
  \begin{figure}[h]
    \begin{center}
      \begin{minipage}[c][][c]{0.45\linewidth}
        \centering
        \begin{tikzpicture}[scale=1.1]
          \draw[very thick,edgeRed] (0,0) to[out=120,in=240] node[circle,inner sep=0.2mm,outer sep=0.2mm,fill=white] {$a$} (0,2);
          \draw[very thick,edgeGreen] (0,0) to[out=60,in=300] node[circle,inner sep=0.2mm,outer sep=0.2mm,fill=white] {$b$} (0,2);
          \draw[very thick,edgeOrange] (0,2) -- node[circle,inner sep=0.2mm,outer sep=0.2mm,fill=white] {$c$} (3,2);
          \draw[very thick,edgeBlue] (0,0) -- node[circle,inner sep=0.2mm,outer sep=0.2mm,fill=white] {$d$} (3,0);
          \draw[very thick,edgeYellow] (3,0) to[out=120,in=240] node[circle,inner sep=0.2mm,outer sep=0.2mm,fill=white] {$e$} (3,2);
          \draw[very thick,edgeViolet] (3,0) to[out=60,in=300] node[circle,inner sep=0.2mm,outer sep=0.2mm,fill=white] {$f$} (3,2);
          \fill (0,0) circle (1.8pt) node[left,font=\footnotesize]{$B$}
          (0,2) circle (1.8pt) node[left,font=\footnotesize]{$A$}
          (3,0) circle (1.8pt) node[right,font=\footnotesize]{$D$}
          (3,2) circle (1.8pt) node[right,font=\footnotesize]{$C$};
        \end{tikzpicture}
      \end{minipage}
      \begin{minipage}[c][][c]{0.45\linewidth}
        \centering
        \begin{tikzpicture}[scale=0.55]
          \draw[very thick,edgeRed] (0,0) -- node[circle,inner sep=0.2mm,outer sep=0.2mm,fill=white] {$a$} (2,0);
          \draw[very thick,edgeGreen] (0,0) -- (0,1) -- node[circle,inner sep=0.2mm,outer sep=0.2mm,fill=white] {$b$} (2,1) -- (2,0);
          \draw[very thick,edgeOrange] (2,0) -- node[circle,inner sep=0.2mm,outer sep=0.2mm,fill=white] {$c$} (3,-1);
          \draw[very thick,edgeBlue] (3,-3) -- (2,-4) -- node[circle,inner sep=0.2mm,outer sep=0.2mm,fill=white] {$d$} (0,-4) -- (-1,-3) -- (-1,-1) -- (0,0);
          \draw[very thick,edgeYellow] (3,-1) -- node[circle,inner sep=0.2mm,outer sep=0.2mm,fill=white] {$e$} (3,-3);
          \draw[very thick,edgeViolet] (3,-1) -- (5,-1) -- node[circle,inner sep=0.2mm,outer sep=0.2mm,fill=white] {$f$} (5,-3) -- (3,-3);
        \end{tikzpicture}
      \end{minipage}
    \end{center}\vspace{-5mm}
    \caption{A tropical curve and tropicalized quartic in $\RR^2$ of type \tnulltwonull{}.}
    \label{fig:type020}
  \end{figure}
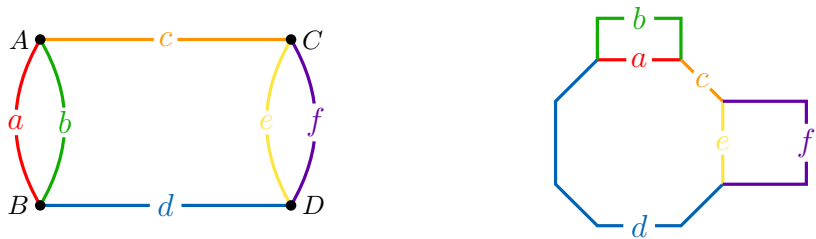
  \begin{enumerate}[leftmargin=*]
  \item By \cite[Theorem 5.1 (020)]{BJMS14}, $\Gamma$ is embeddable as quartic in $\RR^2$.

  \item
    We have to consider two cases: $a=b$ or $a<b$. In both cases, we pick $e'<e$, so that $c+a<c+e'=d$, and $l_1,l_2\geq 0$ such that $2l_1+l_2+e'=e$. If $a=b$, there exist quartic polynomials $q\in K[x,y]$ such that $\Trop(V(q))$ is as depicted in Figure~\ref{fig:NeuesBild1}.

    \begin{figure}[h]
      \begin{center}
        \begin{minipage}{0.45\linewidth}
          \centering
          \begin{tikzpicture}[scale=1.0]
            \fill (0,0) circle (2pt);
            \fill (1,0) circle (2pt);
            \fill (2,0) circle (2pt);
            \fill (3,0) circle (2pt);
            \fill (4,0) circle (2pt);
            \fill (0,1) circle (2pt);
            \fill (1,1) circle (2pt);
            \fill (2,1) circle (2pt);
            \fill (3,1) circle (2pt);
            \fill (0,2) circle (2pt);
            \fill (1,2) circle (2pt);
            \fill (2,2) circle (2pt);
            \fill (0,3) circle (2pt);
            \fill (1,3) circle (2pt);
            \fill (0,4) circle (2pt);
            \node[anchor=north, font=\scriptsize] at (0,0) {$0$};
            \node[anchor=north, font=\scriptsize] at (1,0) {$0$};
            \node[anchor=north, font=\scriptsize] at (2,0) {$0$};
            \node[anchor=north, font=\scriptsize] at (1,1) {$0$};
            \node[anchor=north east, font=\scriptsize] at (2,1) {$0$};
            \draw (0,0) -- (4,0) -- (0,4) -- (0,0)
              (2,0) -- (2,2)
              (0,0) -- (2,2)
              (1,1) -- (0,2)
              (1,1) -- (0,3)
              (1,1) -- (0,4)
              (1,2) -- (0,4)
              (1,2) -- (2,2)
              (2,1) -- (1,1)
              (0,1) -- (1,1)
              (1,3) -- (1,1)
              (3,1) -- (3,0)
              (3,1) -- (2,0)
              (3,1) -- (2,1);
          \end{tikzpicture}
        \end{minipage}
        \begin{minipage}{0.45\linewidth}
          \centering
          \begin{tikzpicture}[xscale=0.2,yscale=0.2]
            \draw (2,1) -- ++(1,1)
            (3,1) -- ++(1,1)
            (8,-1) -- ++(1,1)
            (-3,-1) -- ++(-4,0)
            (-5,-2) -- ++(-2,0)
            (-6,-3) -- ++(-1,0)
            (-6,-4) -- ++(-1,0)
            (8,-14) -- (9,-15) -- (10,-15)
            (10,-15) -- ++(1,1)
            (9,-15) -- ++(0,-1)
            (10,-15) -- ++(0,-1);
            \draw[very thick] (4,-14) -- node[left] {$2$} ++(0,-2);
            \draw[very thick, edgeRed] (0,0) -- node[below,yshift=0.5mm] {$a$} (3,0);
            \draw[very thick, edgeGreen] (0,0) -- node[anchor=south east,xshift=1mm,yshift=-1mm] {$b$} (2,1) -- (3,1) -- (3,0);
            \draw[very thick, edgeOrange] (3,0) -- node[anchor=south west,xshift=-1mm,yshift=-1mm] {$c$} (4,-1);
            \draw[very thick, edgeBlue] (0,0) -- (-3,-1) -- (-5,-2) -- (-6,-3) -- (-6,-4) -- node[circle,inner sep=0.2mm,outer sep=0.2mm,fill=white] {$d$} (4,-14);
            \draw[very thick, edgeYellow] (4,-1) -- node[circle,inner sep=0.2mm,outer sep=0.2mm,fill=white] {$e'$} (4,-14);
            \draw[very thick, edgeViolet] (4,-1) -- (8,-1) -- node[circle,inner sep=0.2mm,outer sep=0.2mm,fill=white] {$f$} (8,-14) -- (4,-14);
          \end{tikzpicture}
        \end{minipage}
      \end{center}\vspace{-3mm}
      \caption{A degenerate tropicalized quartic in $\RR^2$ of type \tnulltwonull{}.}
      \label{fig:NeuesBild1}
    \end{figure}

  If $a<b$, there exist quartic polynomials $q\in K[x,y]$ such that $\Trop(V(q))$ is as depicted in Figure~\ref{fig:type020embedded}.
    \begin{figure}[h]
      \begin{center}
        \begin{minipage}{0.45\linewidth}
          \centering
          \begin{tikzpicture}[scale=1.0]
            \fill (0,0) circle (2pt);
            \fill (1,0) circle (2pt);
            \fill (2,0) circle (2pt);
            \fill (3,0) circle (2pt);
            \fill (4,0) circle (2pt);
            \fill (0,1) circle (2pt);
            \fill (1,1) circle (2pt);
            \fill (2,1) circle (2pt);
            \fill (3,1) circle (2pt);
            \fill (0,2) circle (2pt);
            \fill (1,2) circle (2pt);
            \fill (2,2) circle (2pt);
            \fill (0,3) circle (2pt);
            \fill (1,3) circle (2pt);
            \fill (0,4) circle (2pt);
            \node[anchor=north, font=\scriptsize] at (0,0) {$0$};
            \node[anchor=north, font=\scriptsize] at (1,0) {$0$};
            \node[anchor=north, font=\scriptsize] at (2,0) {$0$};
            \node[anchor=north, font=\scriptsize] at (1,1) {$0$};
            \node[anchor=north east, font=\scriptsize] at (2,1) {$0$};
            \draw (0,0) -- (4,0) -- (0,4) -- (0,0);
            (4,0) -- (0,4);
            \draw (2,0) -- (2,2);
            \draw(0,2) -- (2,2);
            \draw (0,0) -- (2,2);
            \draw (1,1) -- (0,2);
            \draw(2,1)-- (1,1);
            \draw (0,1) -- (1,1);
            \draw (1,3) -- (1,1);
            \draw (1,3) -- (0,3);
            \draw (1,3) -- (0,2);
            \draw (3,1) -- (3,0);
            \draw (3,1) -- (2,0);
            \draw (3,1) -- (2,1);
          \end{tikzpicture}
        \end{minipage}
        \begin{minipage}{0.45\linewidth}
          \centering
          \begin{tikzpicture}[xscale=0.325,yscale=0.325]
            \draw[very thick, edgeOrange] (0,0) -- node[circle,inner sep=0.2mm,outer sep=0.2mm,fill=white] {$c$} (2,-2);
            \draw (0,2) -- (1,3)
            (-2,2) -- (-3,3) -- (-3,4) -- (-2,5)
            (-3,3) -- (-4,3)
            (-3,4) -- (-4,4);
            \draw[very thick, edgeGreen] (0,0) -- (0,2) -- node[circle,inner sep=0.2mm,outer sep=0.2mm,fill=white] {$b$} (-2,2) -- (-2,0);
            \draw[very thick, edgeRed] (-2,0) -- node[circle,inner sep=0.2mm,outer sep=0.2mm,fill=white] {$a$} (0,0);
            \draw[very thick, edgeBlue] (-2,0) -- (-3,-1) -- (-3,-2) -- node[circle,inner sep=0.2mm,outer sep=0.2mm,fill=white] {$d$} (2,-7);
            \draw[very thick, edgeYellow] (2,-7) -- node[circle,inner sep=0.2mm,outer sep=0.2mm,fill=white] {$e'$} (2,-2);
            \draw (-3,-1) -- (-4,-1)
            (-3,-2) -- (-4,-2);
            \draw[very thick] (2,-7) -- node[left,yshift=-2mm] {$2$} (2,-9);
            \draw[very thick, edgeViolet] (2,-7) -- (5,-7) -- node[circle,inner sep=0.2mm,outer sep=0.2mm,fill=white] {$f$} (5,-2) -- (2,-2);
            \draw (5,-2) -- (6,-1)
            (5,-7) -- (6,-8) -- (7,-8) -- (8,-7)
            (6,-8) -- (6,-9)
            (7,-8) -- (7,-9);
            \fill (2,-7) circle (2mm);
            \node[anchor=north east,yshift=2mm,font=\tiny] at (2,-7) {$(0,0)$};
          \end{tikzpicture}
        \end{minipage}
      \end{center}\vspace{-3mm}
      \caption{A degenerate tropicalized quartic in $\RR^2$ of type \tnulltwonull{}.}
      \label{fig:type020embedded}
    \end{figure}

 We take the quartic polynomial $q=\sum_{i,j}a_{ij}x^iy^j$ where
\begin{itemize}[leftmargin=*]
\item all $a_{ij}$ except $a_{11},a_{10},a_{00}$ are of the form $t^{\lambda_{ij}}$ for suitable valuations $\lambda_{ij}$ such that the tropical curve of $q$ is as depicted in Figure~\ref{fig:NeuesBild1} resp.\ \ref{fig:type020embedded}, provided the valuations of $a_{00},a_{10},a_{11}$ are $0$,
\item $a_{11}=1+a_{01}-a_{31}-t^{l_1+l_2}$,
\item $a_{10}=2-3a_{30}+4a_{40}+t^{l_1}$,
\item $a_{00}=1-2a_{30}+3a_{40}+t^{l_1}+t^{e+1}$.
\end{itemize}
Then the conditions of Lemma \ref{lem:unfoldf} are satisfied, i.e.\ adding the equation $x=z-1$ reveals a path of edges of total length $e$ replacing the edge of length $e'$.
\item Pick $e'$ such that $c+e'=d$, $l_1$, $l_2$ such that $2l_1+l_2+e'=e$ and $l_1'$, $l_2'$ such that $2l_1'+l_2'+e'=a$.
There exist quartic polynomials $q=\sum_{i,j}a_{ij}x^iy^j\in K[x,y]$ such that $\Trop(V(q))$ is as depicted in Figure \ref{fig:NeuesBild2}. In particular, we can require that the valuation of $a_{00}$ is $e'$, that $\valuation(a_{22})=0$, $\valuation(a_{13})\geq 2c$, and $a_{11}=1$.

    \begin{figure}[h]
      \begin{center}
        \begin{minipage}{0.45\linewidth}
          \centering
          \begin{tikzpicture}[scale=1.0]
            \fill (0,0) circle (2pt);
            \fill (1,0) circle (2pt);
            \fill (2,0) circle (2pt);
            \fill (3,0) circle (2pt);
            \fill (4,0) circle (2pt);
            \fill (0,1) circle (2pt);
            \fill (1,1) circle (2pt);
            \fill (2,1) circle (2pt);
            \fill (3,1) circle (2pt);
            \fill (0,2) circle (2pt);
            \fill (1,2) circle (2pt);
            \fill (2,2) circle (2pt);
            \fill (0,3) circle (2pt);
            \fill (1,3) circle (2pt);
            \fill (0,4) circle (2pt);
            \node[anchor=south east, font=\scriptsize] at (1,1) {$0$};
            \node[anchor=south west, font=\scriptsize] at (2,1) {$0$};
            \node[anchor=south west, font=\scriptsize] at (2,2) {$0$};
            \draw (0,0) -- (4,0) -- (0,4) -- (0,0)
            (2,0) -- (2,2)
            (0,2) -- (2,2)
            (0,0) -- (2,2)
            (2,1) -- (1,1)
            (1,3) -- (1,1)
            (1,3) -- (0,3)
            (1,3) -- (0,2)
            (3,1) -- (3,0)
            (3,1) -- (2,0)
            (3,1) -- (2,1);
          \end{tikzpicture}
        \end{minipage}
        \begin{minipage}{0.45\linewidth}
          \centering
          \begin{tikzpicture}[scale=0.45]
            \draw (0,1) -- (-1,2) -- (-1,3)
            (5,-3) -- (6,-4) -- (7,-4)
            (-1,3) -- ++(1,1)
            (2,1) -- ++(1,1)
            (5,-1) -- ++(1,1)
            (7,-4) -- ++(1,1)
            (-1,3) -- ++(-1,0)
            (-1,2) -- ++(-1,0)
            (6,-4) -- ++(0,-1)
            (7,-4) -- ++(0,-1);
            \draw[very thick]
            (0,0) -- node[below] {$2$} ++(-2,0)
            (3,-3) -- node[left] {$2$} ++(0,-2);
            \draw[very thick, edgeRed] (0,0) -- node[circle,inner sep=-0.1mm,outer sep=-0.1mm,fill=white] {$e'$} (2,0);
            \draw[very thick, edgeGreen] (0,0) -- (0,1) -- node[circle,inner sep=0.2mm,outer sep=0.2mm,fill=white] {$b$} (2,1) -- (2,0);
            \draw[very thick, edgeOrange] (2,0) -- node[circle,inner sep=0.2mm,outer sep=0.2mm,fill=white] {$c$} (3,-1);
            \draw[very thick, edgeBlue] (0,0) -- node[circle,inner sep=0.2mm,outer sep=0.2mm,fill=white] {$d$} (3,-3);
            \draw[very thick, edgeYellow] (3,-1) -- node[circle,inner sep=-0.1mm,outer sep=-0.1mm,fill=white] {$e'$} (3,-3);
            \draw[very thick, edgeViolet] (3,-1) -- (5,-1) -- node[circle,inner sep=0.2mm,outer sep=0.2mm,fill=white] {$f$} (5,-3) -- (3,-3);
          \end{tikzpicture}
        \end{minipage}
      \end{center}\vspace{-3mm}
      \caption{A degenerate tropicalized quartic in $\RR^2$ of type \tnulltwonull{}.}
      \label{fig:NeuesBild2}
    \end{figure}
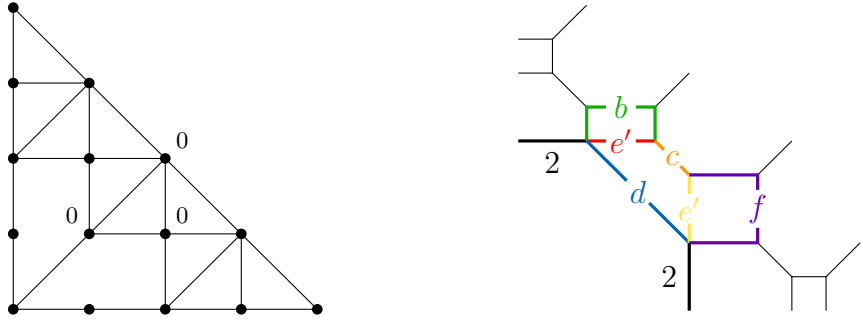

Furthermore, we require that
\begin{itemize}[leftmargin=*]
\item $a_{20}=-t^{l_1+e'}-t^{e+1}+a_{00}+2a_{30}-3a_{40}$
\item $a_{10}=-t^{l_1+e'}+a_{30}-2a_{40}-2t^{e+1}+2a_{00}$
\item $a_{01}=-2t^{c+a+1}+2t^ca_{00}+t^{-2c}a_{03}-2t^{-3c}a_{04}-t^{c+e'+l_1'}$
\item $a_{21}=1-a_{01}+t^{l_1+l_2}$
\item $a_{12}=t^{c+l_1'+l_2'}-t^{2c}a_{10}+t^{-c}a_{13}+t^c$
\item $a_{02}=t^{-2c+a+1}+t^{2c}a_{00}-t^{2c+e'+l_1'}+2t^{-c}a_{03}-3t^{-2c}a_{04}$.
\end{itemize}

Then 
$\valuation(a_{10})=\valuation(a_{20})=e'$, $\valuation(a_{01})=c+e'$, $\valuation(a_{21})=0$, $\valuation(a_{12})=c$, and $\valuation(a_{02})=2c+e'$.
Furthermore, the conditions of Lemmas \ref{lem:unfoldf} and \ref{lem:unfolde} are satisfied, thus adding the equations $z_1=x-1$ and $z_2=y-t^{-c}$ produces paths of length $a$ resp.\ $e$ replacing the edges of length $e'$. \qedhere
  \end{enumerate}
\end{proof}

\begin{proposition}\label{prop:nullnullnull}
  Let $\Gamma$ be a maximal {\nrh} tropical curve of type $\tnullnullnull{}$.
  Then there exists a plane quartic curve $C\subseteq K^4$ whose tropicalization $\Trop(C)$ is faithful and satisfies $\ft^{\trop}(\Trop(C))=\Gamma$.
\end{proposition}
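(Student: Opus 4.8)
The plan is to mirror the constructive strategy of Proposition~\ref{prop:nulltwonull}. A maximal curve $\Gamma$ of type $\tnullnullnull{}$ is the skeleton $K_4$ on four trivalent vertices, so it is determined by its six edge lengths; since every curve of this type is tropically non-hyperelliptic (and hence, by the implication rh $\Rightarrow$ tropically hyperelliptic, automatically \nrh), we must treat all of them. First I would fix coordinates on the moduli cone by labelling the six edges and using the action of $\mathrm{Aut}(K_4)\cong S_4$ to restrict to a fundamental domain, i.e.\ a canonical ordering of the edge lengths. Within this domain I would invoke \cite[Theorem~5.1]{BJMS14} to single out the subcone of curves that are already realizable as a honeycomb quartic in $\RR^2$; for these there is nothing to do. The remaining curves violate the honeycomb inequalities of \cite{BJMS14}, and the violations tell us precisely which edges are ``too long'' to fit in the plane.

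For a non-embeddable $\Gamma$ the plan is to first build a degenerate plane quartic $q\in K[x,y]$ whose tropicalization $\Trop(V(q))$ is a honeycomb $K_4$ having the correct lengths on all but the offending edges, with each offending edge shortened to a threshold length $e'$ chosen so that the remaining data does embed in $\RR^2$. As in the $\tnulltwonull{}$ case, each shortened edge must sit dual to a trapezoid (with an adjacent simplex) in the Newton subdivision, aligned with one of the lattice directions covered by Lemmas~\ref{lem:unfoldf}, \ref{lem:unfolde}, \ref{lem:unfoldd}, namely the lines $\{x=l\}$, $\{y=l\}$, or $\{y=x+l\}$. I would then prescribe the coefficients $a_{ij}$ of $q$ so that the cancellation and valuation conditions of the relevant unfolding lemma hold, and add one modification equation ($x=z-t^{-l}$, $y=z-t^{-l}$, or $y=z-t^{-l}x$) per offending edge. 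Because the target is $C\subseteq K^4=K^{2+2}$, the key structural point is that in the chosen fundamental domain at most two edges exceed their maximal planar lengths, so at most two modifications are needed.

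Finally I would verify that the resulting $C=V(I)$ with $I=\langle q,l_1,l_2\rangle$ has the desired properties: faithfulness follows by checking that every vertex and edge of $\Trop(C)$ has tropical multiplicity one (unimodularity of the relevant part of the subdivision, irreducibility of the initial degenerations, and weight-one edges), exactly as discussed in Section~\ref{sec-modi}; and $\ft^{\trop}(\Trop(C))=\Gamma$ follows because the unfolded paths recover the prescribed edge lengths after shrinking the leaves and ends produced by the modifications.

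The hard part will be twofold. First, the combinatorial bookkeeping: one must show that after reducing by $S_4$ the non-embeddable locus decomposes into finitely many cases, in each of which at most two edges are simultaneously too long and their associated trapezoids can be placed in disjoint, compatible regions of the size-$4$ Newton triangle, so that the two modifications act independently. Second, the explicit coefficient computation: as in Proposition~\ref{prop:nulltwonull}, producing an honest quartic $q$ whose coefficients $a_{ij}$ satisfy all six valuation equations of each applicable lemma simultaneously --- while leaving the rest of the honeycomb subdivision, and hence the other four edge lengths, intact --- requires a careful and somewhat delicate choice of the few coefficients in which the cancellation takes place.
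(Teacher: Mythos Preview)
Your proposal is essentially the paper's own approach: normalize via the symmetry of $K_4$, invoke \cite[Theorem~5.1]{BJMS14} for the embeddable region, and for the remaining cases shorten the offending interior edges so that the resulting honeycomb quartic has the requisite trapezoids, then recover the true lengths via the unfolding Lemmas~\ref{lem:unfoldf}--\ref{lem:unfoldd}. The paper organizes the case split by fixing $abc$ as the longest of the four triangles with $b\le c\le a$ and comparing $(d,e,f)$ componentwise to $(c,b,b)$, which yields exactly the eight sign patterns you would obtain and confirms that two independent modifications suffice.

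One subtlety your outline slightly understates: it is not literally true that ``at most two edges exceed their planar bound'' in the fundamental domain. The sign pattern $(\ge,\ge,\ge)$ does occur, but the longest-cycle assumption forces it to be $(=,=,\ge)$. The paper handles this boundary case not by performing three modifications but by choosing a more degenerate starting subdivision (parallelograms replacing two of the trapezoids) so that $d'=c$ and $e'=b$ are realized exactly in $\RR^2$ and only the single edge $f$ needs unfolding. You should anticipate needing a separate, slightly different construction for this corner of the cone rather than a uniform ``two trapezoids'' template.
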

\begin{proof}
  Let $a,b,c,d,e,f$ be the edge lengths of a tropical curve $\Gamma$ as in Figure~\ref{fig:type000}.  Without loss of generality, we may assume that $abc$ is the longest cycle among the four triangular cycles of $\Gamma$. Furthermore, we may assume that $b\leq c\leq a$.

We consider eight possible cases of mutual componentwise inequalities between the triples $(d,e,f)$ and $(c,b,b)$: $$(<<<),(\ge <<),(<\ge <),(<<\ge),(\ge\ge<),(\ge <\ge),(<\ge\ge),(\ge\ge\ge).$$ Notice that by our assumption the case $(\ge\ge\ge)$ is in fact $(= =\ge)$.

  \begin{figure}[h]
    \begin{center}
      \begin{minipage}[c][][c]{0.32\linewidth}
        \centering
        \begin{tikzpicture}[scale=1.5]
          \draw[very thick,edgeGreen] (0,0) -- node[circle,inner sep=0.2mm,outer sep=0.2mm,fill=white] {$b$} (2.32,0);
          \draw[very thick,edgeOrange] (2.32,0) -- node[circle,inner sep=0.2mm,outer sep=0.2mm,fill=white] {$c$} (1.16,2);
          \draw[very thick,edgeRed] (1.16,2) -- node[circle,inner sep=0.2mm,outer sep=0.2mm,fill=white] {$a$} (0,0);
          \draw[very thick,edgeYellow] (0,0) -- node[circle,inner sep=0.2mm,outer sep=0.2mm,fill=white] {$e$} (1.16,0.58);
          \draw[very thick,edgeViolet] (1.16,0.58) -- node[circle,inner sep=0.2mm,outer sep=0.2mm,fill=white] {$f$} (2.32,0);
          \draw[very thick,edgeBlue] (1.16,0.58) -- node[circle,inner sep=0.2mm,outer sep=0.2mm,fill=white] {$d$} (1.16,2);
        \end{tikzpicture}
      \end{minipage}
      \begin{minipage}[c][][c]{0.32\linewidth}
        \centering
        \begin{tikzpicture}[xscale=0.5,yscale=0.4]
          \draw[very thick,edgeRed] (3,3) -- (3,4) -- node[circle,inner sep=0.2mm,outer sep=0.2mm,fill=white] {$a$} (1,3) -- (-1,1) -- (-1,0);
          \draw[very thick,edgeGreen](0,-3) -- (-1,-4) -- (-2,-4) -- node[circle,inner sep=0.2mm,outer sep=0.2mm,fill=white] {$b$} (-3,-3) -- (-3,-2) -- (-1,0);
          \draw[very thick,edgeOrange] (3,3) -- (6,3) -- node[circle,inner sep=0.2mm,outer sep=0.2mm,fill=white] {$c$} (4,-1) -- (1,-3) -- (0,-3);
          \draw[very thick,edgeBlue] (0,0) -- node[circle,inner sep=0.2mm,outer sep=0.2mm,fill=white] {$d$}(3,3);
          \draw[very thick,edgeYellow]	(-1,0) -- node[circle,inner sep=0.2mm,outer sep=0.2mm,fill=white] {$e$} (0,0);
          \draw[very thick,edgeViolet] (0,0) -- node[circle,inner sep=0.2mm,outer sep=0.2mm,fill=white] {$f$} (0,-3);
        \end{tikzpicture}
      \end{minipage}
      \begin{minipage}[c][][c]{0.32\linewidth}
        \centering
        \begin{tikzpicture}[scale=0.8]
          \draw (0,0) -- (4,0);
          \draw (0,0) -- (0,4);
          \draw (0,4) -- (4,0);
          \draw (0,0) -- (1,1);
          \draw (0,1) -- (1,1);
          \draw (0,2) -- (1,2);
          \draw (0,3) -- (1,2);
          \draw (0,4) -- (1,2);
          \draw (1,3) -- (1,2);
          \draw[very thick,edgeYellow] (1,1) -- (1,2);
          \draw (1,0) -- (1,1);
          \draw[very thick,edgeViolet] (1,1) -- (2,1);
          \draw (2,0) -- (2,1);
          \draw[very thick,edgeBlue] (1,2) -- (2,1);
          \draw (1,2) -- (2,2);
          \draw (2,1) -- (3,0);
          \draw (2,1) -- (4,0);
          \draw (2,1) -- (2,2);
          \draw (2,1) -- (3,1);
          \draw (1,1) -- (2,0);
          \draw (1,1) -- (0,2);
          \fill (0,0) circle (2pt);
          \fill (1,0) circle (2pt);
          \fill (2,0) circle (2pt);
          \fill (3,0) circle (2pt);
          \fill (4,0) circle (2pt);
          \fill (0,1) circle (2pt);
          \fill (1,1) circle (2pt);
          \fill (2,1) circle (2pt);
          \fill (3,1) circle (2pt);
          \fill (0,2) circle (2pt);
          \fill (1,2) circle (2pt);
          \fill (2,2) circle (2pt);
          \fill (0,3) circle (2pt);
          \fill (1,3) circle (2pt);
          \fill (0,4) circle (2pt);
        \end{tikzpicture}
      \end{minipage}
    \end{center}\vspace{-5mm}
    \caption{A tropical curve and tropicalized quartic in $\RR^2$ of type \tnullnullnull{}.}
    \label{fig:type000}
  \end{figure}
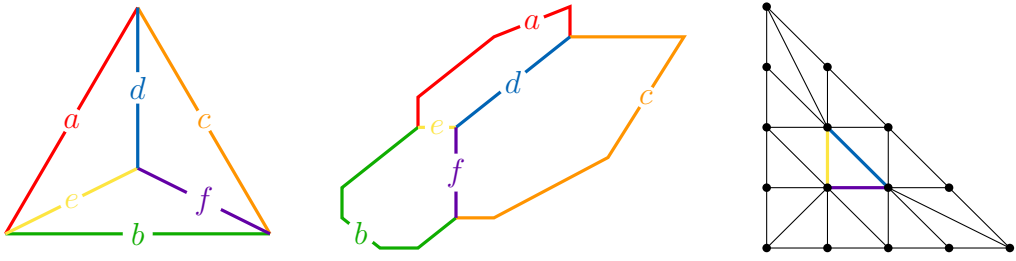

\begin{description}[leftmargin=3mm]
\item[$(<<<)$] By \cite[Theorem 5.1 (000)]{BJMS14}, $\Gamma$ is embeddable as a quartic in $\mathbb{R}^2$.
\item[$(\ge<<)$] Pick a length $d'<c$ and positive $l_1,l_2$ such that $d=d'+2l_1+l_2$. And let us consider a quartic polynomial $q=\sum_{i,j}a_{ij}x^iy^j$ tropicalizing to a tropical curve in $\mathbb{R}^2$ dual to the subdivision on the left of Figure~\ref{fig:dindividually} that realizes the lengths $a,b,c,d',e,f$ and satisfies the following:
\begin{align*}
a_{21}=&a_{12}+a_{30}-a_{03}-t^{l_1+l_2}, \quad a_{31}=2a_{40}-t^{d'+l_1}-2t^{2l_1+l_2+d'+1}+a_{13}-2a_{04},\\
a_{22}=&a_{40}-t^{d'+l_1}-t^{2l_1+l_2+d'+1}+2a_{13}-3a_{04}.
\end{align*}
For example, one can choose $a_{11}=a_{12}=1$, $a_{40}=t^{d'}$, $a_{ij}=t^{\lambda_{ij}}$ for suitable valuations $\lambda_{ij}$ for $(i,j)\notin\{(1,1),(1,2),(2,1),(2,2),(3,1),(4,0)\}$, and use the equations above for $a_{21},a_{31},a_{22}$. Then the coefficients satisfy the equations in Lemma~\ref{lem:unfoldd}, and $\ft^{\trop}(\Trop(V(\langle q,y-z+x\rangle)))=\Gamma.$
\item[$(<\ge<)$] This case follows from the one before by an automorphism of the lattice.
\item[$(<<\ge)$] As before, we could use symmetry argument to deduce this case, but we prefer to give an alternative construction that will be used in the following cases.
Pick a length $f'<b$ and positive $k_1,k_2$ such that $f=f'+2k_1+k_2$. Let $q=\sum_{i,j}a_{ij}x^iy^j$ be a quartic polynomial tropicalizing to a tropical curve in $\mathbb{R}^2$ dual to the subdivision on the right of Figure~\ref{fig:dindividually} that realizes the lengths $a,b,c,d,e,f'$ and satisfies the following:
\begin{align*}
a_{10}=&2a_{00}-t^{k_1+f'}-2t^{2k_1+k_2+f'+1}+a_{30}-2a_{40}, \quad a_{11}=a_{01}+a_{21}-a_{31}-t^{k_1+k_2},\\
a_{20}=&a_{00}-t^{k_1+f'}-t^{2k_1+k_2+f'+1}+2a_{30}-3a_{40}.
\end{align*}
Then $\ft^{\trop}(\Trop(V(\langle q,x-z+1\rangle)))=\Gamma$ by Lemma~\ref{lem:unfoldf}.

   \begin{figure}[ht]
      \begin{center}
        \begin{minipage}{0.3\linewidth}
          \centering
          \begin{tikzpicture}[scale=0.75]
            \draw (0,0) -- (4,0);
            \draw (0,0) -- (0,4);
            \draw (0,4) -- (4,0);
            \draw (0,0) -- (1,1);
            \draw (0,1) -- (1,1);
            \draw (0,2) -- (1,2);
            \draw (0,3) -- (1,2);
            \draw (0,4) -- (1,2);
            \draw (1,3) -- (1,2);
            \draw (1,1) -- (1,2);
            \draw (1,0) -- (1,1);
            \draw (1,1) -- (2,1);
            \draw (2,0) -- (2,1);
            \draw[very thick,edgeBlue] (1,2) -- (2,1);
            \draw (1,2) -- (2,2);
            \draw (2,1) -- (3,0);
            \draw (2,1) -- (4,0);
            \draw (1,1) -- (2,0);
            \draw (1,1) -- (0,2);
            \fill (0,0) circle (3pt);
            \fill (1,0) circle (3pt);
            \fill (2,0) circle (3pt);
            \fill (3,0) circle (3pt);
            \fill (4,0) circle (3pt);
            \fill (0,1) circle (3pt);
            \fill (1,1) circle (3pt);
            \fill (2,1) circle (3pt);
            \fill (3,1) circle (3pt);
            \fill (0,2) circle (3pt);
            \fill (1,2) circle (3pt);
            \fill (2,2) circle (3pt);
            \fill (0,3) circle (3pt);
            \fill (1,3) circle (3pt);
            \fill (0,4) circle (3pt);
          \end{tikzpicture}
        \end{minipage}
        \begin{minipage}{0.3\linewidth}
          \centering
          \begin{tikzpicture}[scale=0.75]
            \draw (0,0) -- (4,0);
            \draw (0,0) -- (0,4);
            \draw (0,4) -- (4,0);
            \draw (0,0) -- (1,1);
            \draw (0,2) -- (1,2);
            \draw (0,3) -- (1,2);
            \draw (0,4) -- (1,2);
            \draw (1,3) -- (1,2);
            \draw[very thick,edgeYellow] (1,1) -- (1,2);
            \draw (1,0) -- (1,1);
            \draw (1,1) -- (2,1);
            \draw (2,0) -- (2,1);
            \draw (1,2) -- (2,1);
            \draw (1,2) -- (2,2);
            \draw (2,1) -- (3,0);
            \draw (2,1) -- (4,0);
            \draw (2,1) -- (2,2);
            \draw (2,1) -- (3,1);
            \draw (1,1) -- (2,0);
            \fill (0,0) circle (3pt);
            \fill (1,0) circle (3pt);
            \fill (2,0) circle (3pt);
            \fill (3,0) circle (3pt);
            \fill (4,0) circle (3pt);
            \fill (0,1) circle (3pt);
            \fill (1,1) circle (3pt);
            \fill (2,1) circle (3pt);
            \fill (3,1) circle (3pt);
            \fill (0,2) circle (3pt);
            \fill (1,2) circle (3pt);
            \fill (2,2) circle (3pt);
            \fill (0,3) circle (3pt);
            \fill (1,3) circle (3pt);
            \fill (0,4) circle (3pt);
          \end{tikzpicture}
        \end{minipage}
        \begin{minipage}{0.3\linewidth}
          \centering
          \begin{tikzpicture}[scale=0.75]
            \draw (0,0) -- (4,0);
            \draw (0,0) -- (0,4);
            \draw (0,4) -- (4,0);
            \draw (0,0) -- (1,1);
            \draw (0,1) -- (1,1);
            \draw (0,2) -- (1,2);
            \draw (0,3) -- (1,2);
            \draw (0,4) -- (1,2);
            \draw (1,3) -- (1,2);
            \draw (1,1) -- (1,2);
            \draw[very thick,edgeViolet] (1,1) -- (2,1);
            \draw (2,0) -- (2,1);
            \draw (1,2) -- (2,1);
            \draw (1,2) -- (2,2);
            \draw (2,1) -- (3,0);
            \draw (2,1) -- (4,0);
            \draw (2,1) -- (2,2);
            \draw (2,1) -- (3,1);
            \draw (1,1) -- (0,2);
            \fill (0,0) circle (3pt);
            \fill (1,0) circle (3pt);
            \fill (2,0) circle (3pt);
            \fill (3,0) circle (3pt);
            \fill (4,0) circle (3pt);
            \fill (0,1) circle (3pt);
            \fill (1,1) circle (3pt);
            \fill (2,1) circle (3pt);
            \fill (3,1) circle (3pt);
            \fill (0,2) circle (3pt);
            \fill (1,2) circle (3pt);
            \fill (2,2) circle (3pt);
            \fill (0,3) circle (3pt);
            \fill (1,3) circle (3pt);
            \fill (0,4) circle (3pt);
          \end{tikzpicture}
        \end{minipage}
      \end{center}\vspace{-0.25cm}
      \caption{Subdivisions of type \tnullnullnull{} for cases $(\ge < <), (< \ge <), (< < \ge)$.}
      \label{fig:dindividually}
    \end{figure}

\item[$(\ge<\ge)$]
%
Choose two lengths $d'<c,f'<b$ and positive $l_1,l_2,k_1,k_2$ such that $d=2l_1+l_2+d'$ and $f=2k_1+k_2+f'$. Let us pick a quartic polynomial $q=\sum_{i,j}a_{ij}x^iy^j$ realizing a curve with lengths $a,b,c,d',e,f'$ dual to a subdivision that contains two trapezoids, as on the left and on the right of Figure~\ref{fig:dindividually}. We can choose $q$ in such a way that $a_{21},a_{31},a_{22}$ and $a_{20},a_{10},a_{11}$ satisfy the equations described in the cases $(\ge <<)$ and $(<<\ge)$ above. Then the modifications can be performed independently, and we obtain
$\ft^{\trop}(\Trop(V(\langle q,y-z'+x,x-z''+1\rangle)))=\Gamma.$
\item[$(<\ge\ge)$, $(\ge\ge<)$] As in the case $(\ge<\ge)$, we independently combine the two constructions for the cases containing exactly one sign $\ge$.

\item[$(\ge\ge\ge)$] 
We first produce a tropical curve with lengths $a,b,c,d=c,e=b,f'$, where $f'=b$ and then enlarge $f'$ to $f$. We pick $l_1$ and $l_2$ such that $f=2l_1+l_2+b$.\par
Consider the quartic polynomial $q=\sum_{i+j\le 4}a_{ij}x^iy^j$, where
\begin{itemize}
\item $a_{04}=t^a$, $a_{03}=a_{13}=0$, $a_{02}=a_{01}=a_{00}t^b$, $a_{30}=2t^b$, $a_{12}=a_{21}=1$, $a_{22}=a_{31}=a_{40}=t^c$,
\item $a_{10}=2a_{00}-t^{l_1+b}-2t^{2l_1+l_2+b+1}+a_{30}-2a_{40}=4t^b-t^{l_1+b}-2t^{2l_1+l_2+b+1}-2t^c$,
\item $a_{11}=a_{01}+a_{21}-a_{31}-t^{l_1+l_2}=1+t^b-t^c-t^{l_1+l_2}$,
\item $a_{20}=a_{00}-t^{l_1+b}-t^{2l_1+l_2+b+1}+2a_{30}-3a_{40}=5t^b-t^{l_1+b}-t^{2l_1+l_2+b+1}-3t^c$.
\end{itemize}

This yields the subdivision in Figure~\ref{fig:gegege}, which produces a curve with lengths $a,b,c,d',e',f'$, where $d'=c,e'=b,f'=b$. Furthermore, the coefficients satisfy the equations in Lemma~\ref{lem:unfoldf}. Analogously to Lemma~\ref{lem:unfoldf}, the re-embedding induced by $z=x+1$ unfolds an edge of length $f$ from the edge of length $f'=b$. \qedhere

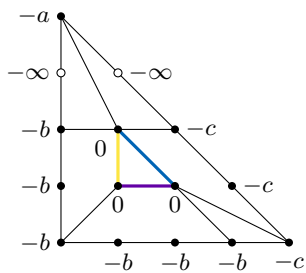
\begin{figure}[ht]
            \begin{tikzpicture}[scale=0.75]
              \draw (0,0) -- (4,0);
              \draw (0,0) -- (0,4);
              \draw (0,4) -- (4,0);
              \draw (0,0) -- (1,1);
              \draw (0,2) -- (1,2);
              \draw (0,4) -- (1,2);
              \draw[very thick,edgeYellow] (1,1) -- (1,2);
              \draw[very thick,edgeViolet] (1,1) -- (2,1);
              \draw[very thick,edgeBlue] (1,2) -- (2,1);
              \draw (1,2) -- (2,2);
              \draw (2,1) -- (3,0);
              \draw (2,1) -- (4,0);
              \fill (0,0) node[anchor=east,font=\scriptsize] {$-b$} circle (2pt);
              \fill (1,0) node[anchor=north,font=\scriptsize] {$-b$} circle (2pt);
              \fill (2,0) node[anchor=north,font=\scriptsize] {$-b$} circle (2pt);
              \fill (3,0) node[anchor=north,font=\scriptsize] {$-b$} circle (2pt);
              \fill (4,0) node[anchor=north,font=\scriptsize] {$-c$} circle (2pt);
              \fill (0,1) node[anchor=east,font=\scriptsize] {$-b$} circle (2pt);
              \fill (1,1) node[anchor=north,font=\scriptsize] {$0$} circle (2pt);
              \fill (2,1) node[anchor=north,font=\scriptsize] {$0$} circle (2pt);
              \fill (3,1) node[anchor=west,font=\scriptsize] {$-c$} circle (2pt);
              \fill (0,2) node[anchor=east,font=\scriptsize] {$-b$} circle (2pt);
              \fill (1,2) node[anchor=north east,font=\scriptsize] {$0$} circle (2pt);
              \fill (2,2) node[anchor=west,font=\scriptsize] {$-c$} circle (2pt);
              \draw[fill=white, draw=black] (0,3) node[anchor=east,font=\scriptsize] {$-\infty$} circle [radius=2pt];
              \draw[fill=white, draw=black] (1,3) node[anchor=west,font=\scriptsize] {$-\infty$} circle [radius=2pt];
              \fill (0,4) node[anchor=east,font=\scriptsize] {$-a$} circle (2pt);
            \end{tikzpicture}
            \caption{Subdivision for type \tnullnullnull{} for the case $(\ge\ge\ge)$.}
            \label{fig:gegege}
            \end{figure}

\end{description}

\end{proof}

\begin{proposition}\label{prop:threenullthree}
  Let $\Gamma$ be a maximal {\nrh} tropical curve of type $\tthreenullthree{}$. Then there exists a plane quartic curve $C\subseteq K^5$ whose tropicalization $\Trop(C)$ is faithful and satisfies $\ft^{\trop}(\Trop(C))=\Gamma$.
\end{proposition}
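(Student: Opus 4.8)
The plan is to follow the strategy of the previous propositions: given a maximal non-rh curve $\Gamma$ of type $\tthreenullthree{}$, construct an explicit quartic polynomial $q\in K[x,y]$ whose associated tropical curve $\Trop(V(q))$ sits inside $\RR^2$ and realizes most of the combinatorial data of $\Gamma$, and then perform a sequence of linear modifications to unfold the remaining features. The type $\tthreenullthree{}$ consists of three loops (``lollipops'') attached to a central structure by bridges, so the key combinatorial obstruction to a planar embedding will be precisely the presence of loops dual to weight-$2$ edges that cannot be resolved in $\RR^2$. Consequently I expect the crucial tool to be Lemma~\ref{prop-unfoldlollis} (unfolding lollipops from weight $2$ edges), applied once for each of the three lollipops, together with Lemmas~\ref{lem:unfoldf}, \ref{lem:unfolde}, \ref{lem:unfoldd} for enlarging bridge edges as needed.

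First I would fix notation for the six edge lengths of $\Gamma$ (the three cycle lengths and three bridge lengths), and use the symmetry of the combinatorial type to reduce to a normalized ordering of these parameters, exactly as in the $\tnullnullnull{}$ case. Next I would write down a Newton subdivision of the triangle of size $4$ that contains three simplices of width $2$ (as in Figure~\ref{fig:unfoldlollis}), one for each lollipop, arranged so that the dual planar tropical curve has the correct bridge lengths and the three weight-$2$ edges in the correct positions. The explicit coefficients $a_{ij}$ of $q$ would then be chosen so that (i) away from the three simplices the valuations are generic and produce the desired edge lengths, and (ii) near each width-$2$ simplex the four cancellation conditions of Lemma~\ref{prop-unfoldlollis} are satisfied, encoding the prescribed edge and cycle lengths $a_i,b_i$ of the $i$-th lollipop. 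Since $\Gamma$ is non-rh, the realizability constraints (by the analogue of the rh/non-rh dichotomy from \cite{ABB}, type $\tthreenullthree{}$ is automatically non-rh) should guarantee that a consistent choice of such coefficients exists.

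The three modifications $z_i = x + A_i$ (or the appropriate coordinate-adapted variants) can then be performed independently, provided the three width-$2$ simplices occupy disjoint regions of the Newton polygon so that the feeding processes do not interfere; this independence is the same mechanism used in the $(\ge<\ge)$ case of Proposition~\ref{prop:nullnullnull}. Composing the three modifications yields an ideal $I=\langle q, l_1,l_2,l_3\rangle\subseteq K[x,y,z_1,z_2,z_3]$ defining a curve $C\subseteq K^5$, whose tropicalization is faithful (all tropical multiplicities equal one, checked via the smoothness of the resulting subdivision together with the fact that each unfolded cycle has weight-one edges) and satisfies $\ft^{\trop}(\Trop(C))=\Gamma$.

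The main obstacle I anticipate is twofold: first, ensuring that a single quartic admits a Newton subdivision accommodating \emph{three} width-$2$ simplices together with three bridges of prescribed (and possibly widely varying) lengths, which is a genuine packing constraint inside the size-$4$ triangle and may force a case distinction on the relative sizes of the parameters; and second, verifying that the cancellation conditions of Lemma~\ref{prop-unfoldlollis} for the three lollipops can be met simultaneously without the coefficient equations becoming overdetermined. I expect the degrees of freedom in the $a_{ij}$ to be just sufficient, but confirming non-interference of the three feeding processes — i.e.\ that each modification sees only ``its own'' simplex — will be the delicate point, handled by choosing the valuations in the remaining entries large enough to be invisible to each unfolding.
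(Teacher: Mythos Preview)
Your approach is essentially the paper's, and it is correct: the paper writes down a single explicit quartic whose Newton subdivision consists precisely of the three width-$2$ simplices of Figure~\ref{fig:unfoldlollis} (one at each corner of the size-$4$ triangle, meeting only at the central interior triangle), and then applies Lemma~\ref{prop-unfoldlollis} three times independently. Two of your anticipated complications do not materialize. First, Lemma~\ref{prop-unfoldlollis} already unfolds both the stick of length $a$ and the cycle of length $b$ of a lollipop in one step, so the trapezoid Lemmas~\ref{lem:unfoldf}--\ref{lem:unfoldd} are not needed at all for this type; the bridge lengths are encoded in the valuations of the corner coefficients (e.g.\ $\valuation(a_{04})=6a+b$) rather than realized as visible edges in $\RR^2$. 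Second, no case distinction on the parameters is required: the three corner simplices fit without any packing constraint, and the paper's uniform polynomial works for all lengths (after ordering $a\ge c\ge e$ by symmetry). The only genuine work is verifying that the three applications of Lemma~\ref{prop-unfoldlollis} are compatible, which reduces to checking conditions~(\ref{prop:unfoldlolliesItem2}) and~(\ref{prop:unfoldlolliesItem4}) for each simplex; the paper does this by an explicit divisibility check on the relevant $t^0$-contributions, with a small subcase analysis ($e<c$, $e=c<a$, $e=c=a$) only for the third simplex.
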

\begin{proof}
  Let $a,b,c,d,e,f$ be the edge lengths of an tropical curve $\Gamma$ as in Figure~\ref{fig:type303}. By \cite[Theorem 5.1]{BJMS14}, no curve of type \tthreenullthree{} is embeddable as a quartic in $\mathbb{R}^2$. 

  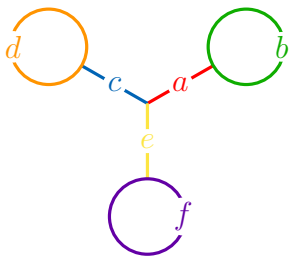
\begin{figure}[h]
    \centering
    \begin{tikzpicture}[scale=0.5]
      \draw[very thick,edgeRed] (0,0) -- node[circle,inner sep=0.2mm,outer sep=0.2mm,fill=white] {$a$} (30:2);
      \draw[very thick,edgeBlue] (0,0) -- node[circle,inner sep=0.2mm,outer sep=0.2mm,fill=white] {$c$} (150:2);
      \draw[very thick,edgeYellow] (0,0) -- node[circle,inner sep=0.2mm,outer sep=0.2mm,fill=white] {$e$} (270:2);

      \node[draw=edgeGreen,very thick,circle,inner sep=0pt,minimum size=1cm] (cycle1) at (30:3) {};
      \node[draw=edgeOrange,very thick,circle,inner sep=0pt,minimum size=1cm] (cycle2) at (150:3) {};
      \node[draw=edgeViolet,very thick,circle,inner sep=0pt,minimum size=1cm] (cycle3) at (270:3) {};

      \node[edgeGreen,circle,inner sep=0.2mm,outer sep=0.2mm,fill=white,xshift=-0.5mm] at (cycle1.east) {$b$};
      \node[edgeOrange,circle,inner sep=0.2mm,outer sep=0.2mm,fill=white,xshift=0.5mm] at (cycle2.west) {$d$};
      \node[edgeViolet,circle,inner sep=0.2mm,outer sep=0.2mm,fill=white,xshift=-0.5mm] at (cycle3.east) {$f$};
    \end{tikzpicture}\vspace{-5mm}
    \caption{An abstract tropical curve of type \tthreenullthree{}.}
    \label{fig:type303}
  \end{figure}

 To show that any curve $C$ is embeddable as a quartic in a tropical plane, let $a,b,c,d,e,f$ be any lengths. Due to symmetry, we may assume $a\ge c\ge e$. Consider the following plane quartic, whose Newton subdivision and tropical curve is depicted in Figure~\ref{fig:303}:
\begin{align*}
  p&= t^{6a+b}\cdot y^4 + t^{6c+d}\cdot x^4+t^{6e+f}  +x^2y^2+ 2\cdot xy^2 + (1-t^{2a}) y^2+ 2\cdot x^2y \\
   &\quad +(1-t^{2c})\cdot x^2- 2\cdot (1+t^{2e})\cdot xy.
\end{align*}

\begin{figure}[h]
  \centering
  \begin{center}
    \begin{minipage}[c][][c]{0.3\linewidth}
      \centering
      \begin{tikzpicture}
        \foreach \x in {0,1,2,3,4}
        {
          \fill (\x,0) circle (0.075cm);
        }
        \foreach \x in {0,1,2,3}
        {
          \fill (\x,1) circle (0.075cm);
        }
        \foreach \x in {0,1,2}
        {
          \fill (\x,2) circle (0.075cm);
        }
        \foreach \x in {0,1}
        {
          \fill (\x,3) circle (0.075cm);
        }
        \foreach \x in {0}
        {
          \fill (\x,4) circle (0.075cm);
        }
        \draw (0,0) -- (4,0) -- (0,4) -- cycle;
        \draw (2,0) -- (2,2) -- (0,2) -- cycle;
      \end{tikzpicture}
    \end{minipage}
    \begin{minipage}[c][][c]{0.55\linewidth}
      \centering
      \begin{tikzpicture}[scale=0.9]
        \draw (0,0) -- node[above] {\scriptsize $2$} (2,0)
        (0,0) -- node[left] {\scriptsize $2$} (0,2)
        (0,0) -- node[below] {\scriptsize $2$} (-1.5,-1.5);

        \draw (2,0) -- node[above] {\scriptsize $2$} ++(1,1)
        (2,0) -- node[right] {\scriptsize $2$} ++(0,-1.5)
        (0,2) -- node[above] {\scriptsize $2$} ++(0.75,0.75)
        (0,2) -- node[above] {\scriptsize $2$} ++(-1.5,0)
        (-1.5,-1.5) -- node[below] {\scriptsize $2$} ++(-1.5,0)
        (-1.5,-1.5) -- node[right] {\scriptsize $2$} ++(0,-0.75);

        \fill (0,0) circle (0.075cm) node[left]{\scriptsize $(0,0)$}
        (2,0) circle (0.075cm) node[right]{\scriptsize $(6c+d,0)$}
        (0,2) circle (0.075cm) node[right]{\scriptsize $(0,6a+b)$}
        (-1.5,-1.5) circle (0.075cm) node[anchor=south east]{\scriptsize $(-6e-f,-6e-f)$};
      \end{tikzpicture}
    \end{minipage}
  \end{center}\vspace{-3mm}
  \caption{The Newton subdivision and plane tropical quartic from which we can unfold three lollis.}
  \label{fig:303}
\end{figure}
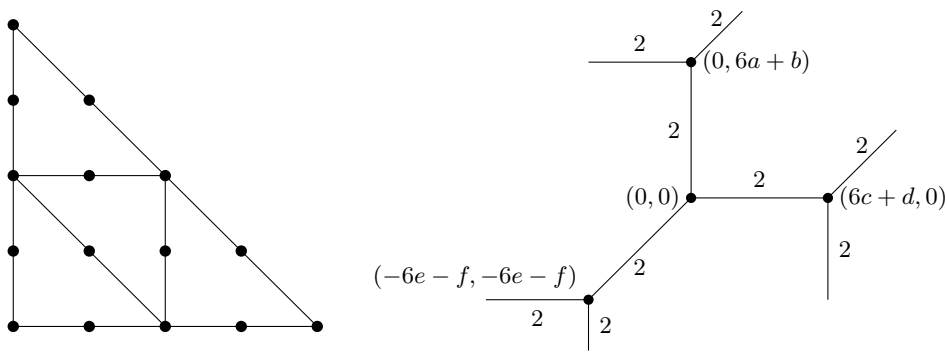
We use Lemma~\ref{prop-unfoldlollis} to unfold three lollis from the edges of weight $2$.
For the polygon $\conv\{(0,2),(2,2),(0,4)\}$, we can read off immediately that the requirements of Lemma~\ref{prop-unfoldlollis} are satisfied:
$p$ restricted to the corresponding weight $2$ edge equals
$$ x^2y^2+2xy^2+(1-t^{2a})\cdot y^2= (y (x+1))^2 +y^2\cdot t^{2a}\cdot (-1).$$
This is a square of $y$ times the linear form $L=x+1$ up to order $t^{2a}$, and the $t^{2a}$-contribution, $-y^2$, is not divisible by $x+1$. Thus conditions (\ref{prop:unfoldlolliesItem1}) and (\ref{prop:unfoldlolliesItem2}) of Lemma~\ref{prop-unfoldlollis} are satisfied. The vertex $V$ is at $(0,6a+b)$ as requested in condition~(\ref{prop:unfoldlolliesItem3}). Finally, the $t^0$-terms of $xy$ and $x^2y$ are $2\cdot (-1+x)xy$ which is again not divisible by $x+1$, so (\ref{prop:unfoldlolliesItem4}) is satisfied. The reasoning for the polygon $\conv\{(2,0),(2,2),(4,0)\}$ is symmetric.
For the polygon $\conv\{(2,0),(0,2),(0,0)\}$, we have to divide by $(1-t^{2a})=\sum_{k\geq0} (t^{2a})^k$ first to express the polynomial in the form used in Lemma~\ref{prop-unfoldlollis}. To simplify the computation, we assume $e\leq c\leq a$.
If $e<c$,
$p$ restricted to the corresponding weight $2$ edge equals
$$ y^2+ x^2- 2\cdot (1+t^{2e})\cdot xy +O(t^{2c})= (x-y)^2 +t^{2e}\cdot x \cdot (-2y) +O(t^{2c}) .$$
If $e=c<a$, we have
$$ y^2+ (1-t^{2e})x^2- 2\cdot (1+t^{2e})\cdot xy +O(t^{2a})= (x-y)^2 +t^{2e}\cdot x \cdot (-2y-x) +O(t^{2a}) .$$
If $e=c=a$, we have
\begin{align*}
  & y^2+ x^2- 2\cdot (1+t^{2e})/(1-t^{2e})\cdot xy +O(t^{2a+1})\\
  & \quad =y^2+ x^2- 2\cdot (1+2t^{2e})\cdot xy +O(t^{2a+1})=
(x-y)^2 +t^{2e}\cdot x \cdot (-4y) +O(t^{2a+1}) .\end{align*}
In any case, the $t^{2e}$-contribution is not divisible by $x-y$.
The $t^0$-contribution of $p$ restricted to $\conv\{(1,2),(2,1)\}$ equals $2\cdot xy\cdot (y+x)$ which is not divisible by $x-y$. With three linear modifications in total, we arrive at a model of the tropical plane and a tropicalized quartic defined by $p$ and the additional linear equations which is faithful on the skeleton and realizes the desired abstract curve.

\end{proof}

\begin{proposition}\label{prop:twoonetwo}
  Let $\Gamma$ be a maximal {\nrh} tropical curve of type $\ttwoonetwo{}$.
  Then there exists a plane quartic curve $C\subseteq K^4$ whose tropicalization $\Trop(C)$ is faithful and satisfies $\ft^{\trop}(\Trop(C))=\Gamma$.
\end{proposition}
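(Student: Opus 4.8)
The plan is to treat a curve of type \ttwoonetwo{} as a central banana carried honestly inside $\RR^2$ together with two loops that are produced by unfolding lollipops, so that the two linear modifications account for the ambient space $K^{4}=K^{2+2}$ appearing in the statement. First I would label the six edge lengths of $\Gamma$ by $a,b,c,d,e,f$, where $a,f$ are the cycle lengths of the two loops, $b,e$ the lengths of the two bridges, and $c,d$ the two edges of the central banana. The two banana edges form the unique $2$-edge cut of \ttwoonetwo{}, so by the discussion of $2$-edge cuts preceding Definition~\ref{def-hyper} a curve of this type is {\rh} exactly when $c=d$; since $\Gamma$ is {\nrh} we have $c\neq d$, and using the automorphism of \ttwoonetwo{} that interchanges the two banana edges I may assume $c<d$. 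This strict inequality plays here the same role that the condition $c<d$ plays in Proposition~\ref{prop:nulltwonull}. Using the reflection of \ttwoonetwo{} that swaps the two loops I may additionally normalise the labelling of the loops and bridges.

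Next I would arrange that the base planar quartic has genus $1$: only the banana is realised as a bounded region of $\Trop(V(q))$, around one of the three interior lattice points $\{(1,1),(1,2),(2,1)\}$ of the quartic triangle, while each of the two loops is hidden on a bounded edge of weight $2$ coming from a width-$2$ simplex as in Figure~\ref{fig:unfoldlollis}. The two remaining interior lattice points are then spent on these two weight-$2$ edges, and the two modifications restore the missing genus, in exact parallel with the single loop unfolded for type \toneoneone{} in Figure~\ref{fig:tropicalModificationPlusProjections} and the three loops unfolded for type \tthreenullthree{} in Proposition~\ref{prop:threenullthree}. Concretely, I would write down a quartic $q=\sum a_{ij}x^iy^j$ whose Newton subdivision realises the banana with edges of the prescribed lengths $c<d$ and places a width-$2$ simplex on each side of it, joined to the banana through bridges of lengths $b$ and $e$, so that the planar curve already has the shape (loop-edge)--bridge--banana--bridge--(loop-edge). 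For the subcase of curves that are directly embeddable as quartics in $\RR^2$ by \cite[Theorem~5.1]{BJMS14}, no modification is needed at all.

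I would then invoke Lemma~\ref{prop-unfoldlollis} once for each weight-$2$ edge. For each of them I would prescribe the coefficients of $q$ along the corresponding width-$2$ simplex so that the restriction of $q$ is a perfect square of $y$ (respectively of the appropriate variable after a lattice automorphism) times a linear form $L$, modulo the relevant power of $t$, with the leading correction term not divisible by $L$ and the adjacent $t^{0}$-terms not divisible by $L$; together with the length imposed by the third condition of Lemma~\ref{prop-unfoldlollis}, expressed through the bridge and cycle lengths of the corresponding loop, this is exactly what the four conditions of Lemma~\ref{prop-unfoldlollis} demand. As in the case $(\ge<\ge)$ of Proposition~\ref{prop:nullnullnull}, the two width-$2$ simplices lie in disjoint regions of the subdivision, so the two modifications act independently; eliminating the auxiliary variable in each step as in the proof of Lemma~\ref{prop-unfoldlollis} yields an ideal $I=\langle q,\,l_1,\,l_2\rangle$ and a curve $C\subseteq K^{4}$. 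Finally I would verify faithfulness via the multiplicity criterion \cite[Theorem~5.24]{BPR1}, checking that each vertex of the skeleton has irreducible initial degeneration and each edge has weight one, which simultaneously gives $\ft^{\trop}(\Trop(C))=\Gamma$.

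The main obstacle is the simultaneous bookkeeping. One must choose the coefficients $a_{ij}$ so that the banana appears with the two \textbf{distinct} prescribed lengths $c<d$, so that both width-$2$ simplices sit in the subdivision with the correct adjacencies and the correct bridge lengths, and so that the local ``square plus correction'' hypotheses of Lemma~\ref{prop-unfoldlollis} hold at both simplices at once, all without the valuations of the remaining coefficients disturbing the global combinatorial type. Since the banana and the two loops must share the three interior lattice points of the quartic triangle, the delicate point is to check that these constraints are jointly satisfiable and that the cancellations needed to hide the two loops do not destroy the faithfulness of the planar part; here the strict inequality $c<d$ furnished by the {\nrh} hypothesis is precisely what leaves enough room, just as the analogous inequalities did in Propositions~\ref{prop:nulltwonull} and~\ref{prop:nullnullnull}.
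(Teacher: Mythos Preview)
Your plan is essentially the same as the paper's proof: after reducing to $c<d$, the paper splits into the case $c<d\le 2c$, which is directly embeddable in $\RR^2$ by \cite[Theorem~5.1 (212)]{BJMS14}, and the case $d>2c$, where it writes down an explicit quartic whose Newton subdivision realises the banana around the point $(1,1)$ and carries the two width-$2$ simplices of Figure~\ref{fig:unfoldlollis} types (1) and (3), then applies Lemma~\ref{prop-unfoldlollis} twice (with modifications $z_1=x+1+t^b$ and $z_2=y+1+t^e$) to unfold the two lollis independently. The only thing your sketch lacks relative to the paper is this explicit $d\le 2c$ versus $d>2c$ dichotomy and the concrete polynomial; the structural argument is identical.
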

\begin{proof}
  Let $a,b,c,d,e,f$ be the edge lengths of a tropical curve $\Gamma$ as in Figure~\ref{fig:type212}. Due to symmetry, we may assume $c\leq d$. By Lemma~\ref{lemtwoonetwo}, $\Gamma$ is rh for $c=d$. We distinguish the following cases:
  \[  (1)\quad c<d\leq 2c \qquad\qquad (2)\quad c<2c<d. \]
  \vspace{-2mm}
  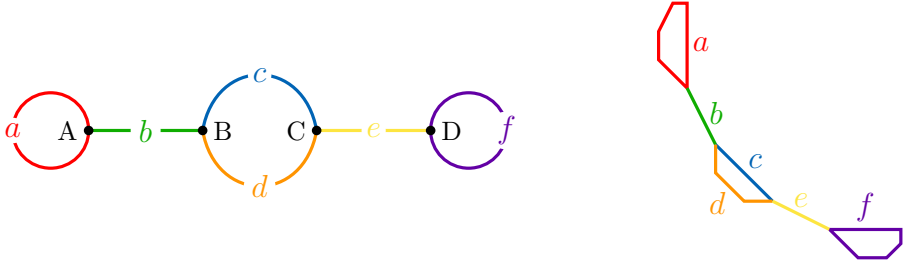
\begin{figure}[ht]
    \begin{center}
      \begin{minipage}[c][][c]{0.45\linewidth}
        \centering
        \begin{tikzpicture}[scale=1]
          \draw[very thick,edgeRed] (0,0) circle (0.5cm);
          \node[edgeRed,circle,inner sep=0.2mm,outer sep=0.2mm,fill=white] at (-0.5,0) {$a$};
          \draw[very thick,edgeGreen] (0.5,0) -- node[circle,inner sep=0.2mm,outer sep=0.2mm,fill=white] {$b$} (2,0);
          \draw[very thick,edgeBlue] (2,0) to[out=80,in=100,min distance=1cm] node[circle,inner sep=0.2mm,outer sep=0.2mm,fill=white] {$c$} (3.5,0);
          \draw[very thick,edgeOrange] (2,0) to[out=280,in=260,min distance=1cm] node[circle,inner sep=0.2mm,outer sep=0.2mm,fill=white] {$d$} (3.5,0);
          \draw[very thick,edgeYellow] (3.5,0) -- node[circle,inner sep=0.2mm,outer sep=0.2mm,fill=white] {$e$} (5,0);
          \draw[very thick,edgeViolet] (5.5,0) circle (0.5cm);
          \node[edgeViolet,circle,inner sep=0.2mm,outer sep=0.2mm,fill=white] at (6,0) {$f$};
          \fill (0.5,0) circle (1.8pt)
          (2,0) circle (1.8pt)
          (3.5,0) circle (1.8pt)
          (5,0) circle (1.8pt);
          \node[left,font=\footnotesize] at (0.5,0) {A};
          \node[right,font=\footnotesize] at (2,0) {B};
          \node[left,font=\footnotesize] at (3.5,0) {C};
          \node[right,font=\footnotesize] at (5,0) {D};
        \end{tikzpicture}
      \end{minipage}
      \begin{minipage}[c][][c]{0.45\linewidth}
        \centering
        \begin{tikzpicture}[scale=0.375]
          \draw[very thick,edgeRed] (-1,2) -- (-2,3) -- (-2,4) -- (-1.5,5) -- (-1,5) -- node[anchor=west,xshift=-2mm,yshift=2.5mm] {$a$} cycle;
          \draw[very thick,edgeGreen] (0,0) -- node[anchor=west,xshift=-0.5mm,yshift=0.5mm] {$b$} (-1,2);
          \draw[very thick,edgeBlue] (0,0) -- node[anchor=south west,xshift=-1mm,yshift=-1mm] {$c$} (2,-2);
          \draw[very thick,edgeOrange] (0,0) -- (0,-1) -- node[anchor=north east,xshift=1mm,yshift=1mm] {$d$} (1,-2) -- (2,-2);
          \draw[very thick,edgeYellow] (2,-2) -- node[anchor=south,yshift=-0.5mm] {$e$} (4,-3);
          \draw[very thick,edgeViolet] (4,-3) -- (5,-4) -- (6,-4) -- (6.5,-3.5) -- (6.5,-3) -- node[above,yshift=-0.5mm] {$f$} (4,-3);
        \end{tikzpicture}
      \end{minipage}
    \end{center}\vspace{-3mm}
    \caption{An abstract and plane tropical curve of type \ttwoonetwo{}.}
    \label{fig:type212}
  \end{figure}

  \begin{enumerate}[leftmargin=*]
  \item By \cite[Theorem~5.1 (212)]{BJMS14}, $\Gamma$ is embeddable as quartic in $\mathbb{R}^2$.
  \item To construct curves satisfying $d>2c$, consider the following quartic polynomial whose Newton subdivision and tropical curve as depicted in Figure \ref{fig:212embedded}:
%
\begin{align*}
  p&= t^{6e+f+2c}\cdot x^4 + t^{a+6b+2c}\cdot y^4  +x^2y^2+ 2\cdot  xy^2 + (1-t^{2b}) y^2+ 2\cdot x^2y \\
   & \quad +(1-t^{2e})\cdot x^2 + t^{-c} xy+ t^{\frac{d-3c-1}{2}}y+ t^{\frac{d-3c-1}{2}} x+ t^{d-3c}.
\end{align*}\vspace{-5mm}

As in the previous type \tthreenullthree{}, Lemma~\ref{prop-unfoldlollis} allows us to independently unfold lollis from the weight $2$ edges, using the modification $z_1=x+1+t^{b}$ and $z_2=y+1+t^e$.
  \end{enumerate}
\end{proof}
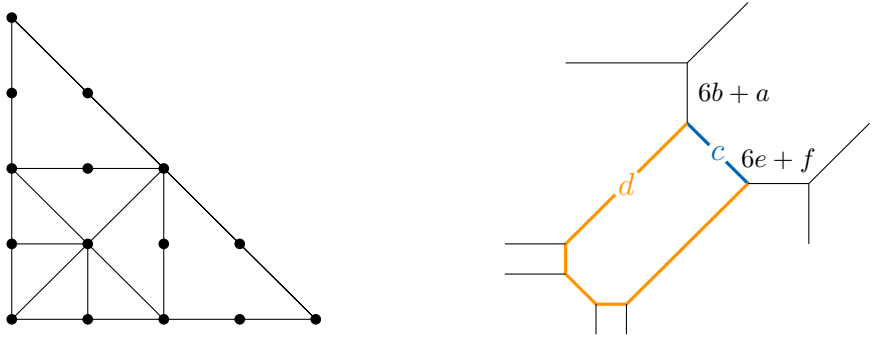
\begin{figure}[ht]
  \begin{center}
    \begin{minipage}{0.4\linewidth}
      \centering
      \begin{tikzpicture}[scale=1.0]
        \fill (0,0) circle (2pt);
        \fill (1,0) circle (2pt);
        \fill (2,0) circle (2pt);
        \fill (3,0) circle (2pt);
        \fill (4,0) circle (2pt);
        \fill (0,1) circle (2pt);
        \fill (1,1) circle (2pt);
        \fill (2,1) circle (2pt);
        \fill (3,1) circle (2pt);
        \fill (0,2) circle (2pt);
        \fill (1,2) circle (2pt);
        \fill (2,2) circle (2pt);
        \fill (0,3) circle (2pt);
        \fill (1,3) circle (2pt);
        \fill (0,4) circle (2pt);
        \draw (0,0) -- (4,0) -- (0,4) -- (0,0)
        (4,0) -- (0,4);
        \draw (2,0) -- (2,2);
        \draw (0,2) -- (2,2);
        \draw (0,0) -- (2,2);
        \draw (2,0) -- (0,2);
        \draw (1,0) -- (1,1);
        \draw (0,1) -- (1,1);
      \end{tikzpicture}
    \end{minipage}
    \begin{minipage}{0.5\linewidth}
      \centering
      \begin{tikzpicture}[scale=0.4]
        \draw[very thick,edgeBlue] (0,0) -- node[circle,inner sep=0.2mm,outer sep=0.2mm,fill=white] {$c$} (2,-2);
        \draw[very thick,edgeOrange] (0,0) -- node[circle,inner sep=0.2mm,outer sep=0.2mm,fill=white] {$d$} (-4,-4) -- (-4,-5) -- (-3,-6) -- (-2,-6)  -- (2,-2);
        \draw (-4,-4) -- (-6,-4);
        \draw (-4,-5) -- (-6,-5);
        \draw (-3,-6) -- (-3,-7);
        \draw (-2,-6) -- (-2,-7);

        \draw (0,0) -- node[right,font=\footnotesize]{$6b+a$} (0,2);
        \draw (2,-2) -- node[above,font=\footnotesize]{$6e+f$}(4,-2);
        \draw (0,2) -- (-4,2);
        \draw (0,2) -- (2,4);
        \draw (4,-2) --(4,-4);
        \draw (4,-2) -- (6,0);
      \end{tikzpicture}
    \end{minipage}
  \end{center}\vspace{-3mm}
  \caption{A tropicalized quartic in $\mathbb{R}^2$ from which we can unfold two lollis.}
  \label{fig:212embedded}
\end{figure}

\begin{proposition}\label{prop:oneoneone}
  Let $\Gamma$ be a maximal {\nrh} tropical curve of type $\toneoneone{}$. Then there exists a plane quartic curve $C\subseteq K^4$ whose tropicalization $\Trop(C)$ is faithful and satisfies $\ft^{\trop}(\Trop(C))=\Gamma$.
\end{proposition}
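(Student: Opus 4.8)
The plan is to treat $\Gamma$ as a genus-$2$ ``theta'' piece with a lolli attached, and to combine the two modification techniques already developed. Write $a,b,c,d,e,f$ for the edge lengths of $\Gamma$, where $a,b$ are the two banana edges joining the vertices $A,B$, the edges $c,d$ join $A,B$ to the central vertex $C$, the bridge $e$ joins $C$ to the loop vertex $D$, and $f$ is the loop at $D$ (the combinatorial type \toneoneone{} being the one of Figure~\ref{fig:tropicalModificationPlusProjections}). Interchanging the two banana edges lets me assume $a\le b$, and interchanging $A$ and $B$ lets me assume $c\le d$. The $2$-edge cut $\{AC,BC\}$ separating the banana from the lolli is the relevant one; as recalled in Section~\ref{subsec:moduliandcurves}, a curve of this type is \rh{} exactly when this cut is balanced, i.e.\ $c=d$. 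Since $\Gamma$ is {\nrh}, I may therefore assume $c<d$.

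First I would unfold the loop. Just as for types \tthreenullthree{} and \ttwoonetwo{}, I take a quartic $q$ whose Newton subdivision contains a width-$2$ simplex, so that $\Trop(V(q))$ has a bounded weight-$2$ edge of length $3e+\tfrac f2$ attached at the image of $C$. A single linear modification then unfolds a bridge of length $e$ carrying a cycle of length $f$, by Lemma~\ref{prop-unfoldlollis}: I only have to arrange that $q$ restricted to the relevant weight-$2$ edge is a square of $y$ times a linear form $L$ up to order $t^{2e}$, that the $t^{2e}$-contribution is not divisible by $L$, and that the $t^{0}$-part coming from the $y\cdot x^i$-terms is not divisible by $L$ either.

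Next I would realize the theta piece. For edge lengths lying in the embeddable locus of \cite[Theorem~5.1 (111)]{BJMS14} the whole curve is already a plane quartic and no further work is needed. Otherwise the longer edge dual to $d$ (or, depending on the regime, a banana edge) cannot be fitted into $\RR^2$, and I would make the Newton subdivision contain a trapezoid with a simplex on its short edge as in Figure~\ref{fig:threecases}, positioned so that the relevant edge abuts it. Applying the appropriate one of Lemmas~\ref{lem:unfoldf}, \ref{lem:unfolde}, \ref{lem:unfoldd} then stretches that edge to its prescribed length by a second modification. The case distinction here is on the size of $d$ relative to $c$ (and to the banana lengths), in the spirit of the split $c<d\le 2c$ versus $2c<d$ used for type \ttwoonetwo{}; it decides whether the second modification is needed and along which lattice direction the trapezoid is oriented.

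Finally I would exhibit a single quartic $q=\sum a_{ij}x^iy^j$ realizing both features, placing the width-$2$ simplex and the trapezoid in disjoint parts of the Newton polygon so that the two modifications act independently, exactly as in the $(\ge<\ge)$ case of Proposition~\ref{prop:nullnullnull}; adding the two linear equations produces $C\subseteq K^4$ with $\ft^{\trop}(\Trop(C))=\Gamma$. \textbf{The main obstacle} is the simultaneous bookkeeping of the coefficients: one must choose a single family of valuations $\{a_{ij}\}$ that produces the genus-$2$ theta with the correct lengths $a,b,c$ together with a weight-$2$ edge of length $3e+\tfrac f2$, that satisfies the non-square and non-divisibility hypotheses of Lemma~\ref{prop-unfoldlollis}, and that meets the six valuation identities of the relevant trapezoid lemma, all while keeping every tropical multiplicity equal to $1$. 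Faithfulness at the central vertex $C$, where the theta, the enlarged spoke and the lolli stub meet, is the delicate point: I expect the real effort to be in checking that its initial degeneration remains irreducible after both modifications.
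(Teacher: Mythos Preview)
Your overall strategy---unfold the lolli with Lemma~\ref{prop-unfoldlollis} and, when the theta piece does not fit in $\RR^2$, enlarge an edge via one of the trapezoid lemmas---is exactly the paper's. The difference lies in the case split and in which edge gets enlarged.

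You propose splitting on $d$ versus $2c$ (mimicking type \ttwoonetwo{}) and you anticipate having to perform the lolli modification \emph{and} a trapezoid modification on the theta in the same quartic, which leads you to flag faithfulness at the central vertex $C$ as the main obstacle. The paper instead splits on $d$ against $b+c$ and $b+3c$, and this finer split makes the obstacle disappear: in the regime $b+c<d<b+3c$ the curve is already a plane quartic by \cite[Theorem~5.1~(111)]{BJMS14}; for $d\ge b+3c$ the full theta with lengths $a,b,c,d$ is visible in $\RR^2$ and only the lolli needs unfolding; for $c<d\le b+c$ the lolli (both $e$ and $f$) is visible directly in the plane quartic and the single modification enlarges the \emph{banana} edge $b$ (not $d$) via Lemma~\ref{lem:unfolde}, shrinking it temporarily to $b'$ with $b'+c=d$. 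Thus the paper never has to combine a lolli-unfold with a theta-enlargement in one picture, except in the degenerate subcase $a=b$ where an extra cycle-unfold from \cite[Theorem~3.4]{CM14} is tacked on independently.

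So your plan would work, but you are making life harder than necessary: the right case boundaries are $b+c$ and $b+3c$, and once you use them the ``delicate point'' you worry about simply does not arise.
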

\begin{proof}
  Let $a,b,c,d,e,f$ be the edge lengths of a tropical curve $\Gamma$ as in Figure~\ref{fig:type111}. Due to symmetry, we may assume $a\geq b$ and $c\leq d$. Since $\Gamma$ is {\nrh}, we have $c<d$. We distinguish the following cases:
  \begin{center}\vspace{-3mm}
    \begin{minipage}[t]{0.37\linewidth}
      \begin{enumerate}[leftmargin=*]
      \item $b+3c\leq d$ and $a>b$,
      \item $b+3c\leq d$ and $a=b$,
      \end{enumerate}
    \end{minipage}%
    \begin{minipage}[t]{0.3\linewidth}
      \begin{enumerate}[leftmargin=*]
        \setcounter{enumi}{2}
      \item $b+c<d<b+3c$,
      \item $c<d\leq b+c$.
      \end{enumerate}
    \end{minipage}%
  \end{center}
  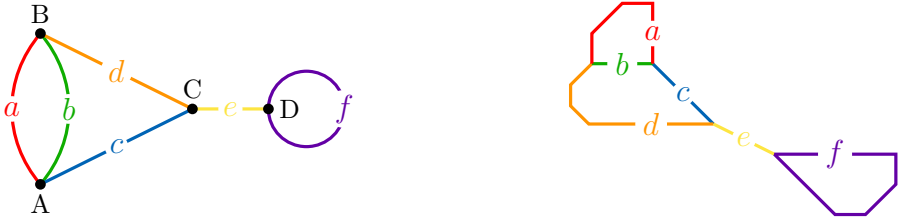
\begin{figure}[ht]
    \begin{center}
      \begin{minipage}[c][][c]{0.4\linewidth}
        \centering
        \begin{tikzpicture}
          \draw[edgeRed,very thick] (0,0) to[out=130,in=230] node[circle,inner sep=0.2mm,outer sep=0.2mm,fill=white] {$a$} (0,2);
          \draw[edgeGreen,very thick] (0,0) to[out=50,in=310] node[circle,inner sep=0.2mm,outer sep=0.2mm,fill=white] {$b$} (0,2);
          \draw[edgeOrange,very thick] (0,2) -- node[circle,inner sep=0.2mm,outer sep=0.2mm,fill=white] {$d$} (2,1);
          \draw[edgeBlue,very thick] (0,0) -- node[circle,inner sep=0.2mm,outer sep=0.2mm,fill=white] {$c$} (2,1);
          \draw[edgeYellow,very thick] (2,1) -- node[circle,inner sep=0.2mm,outer sep=0.2mm,fill=white] {$e$} (3,1);
          \draw[edgeViolet,very thick] (3.5,1) circle (0.5);
          \node[edgeViolet,circle,inner sep=0.2mm,outer sep=0.2mm,fill=white] at (4,1) {$f$};
          \fill (0,0) circle (2pt) node [below,font=\footnotesize] {A};
          \fill (0,2) circle (2pt) node [above,font=\footnotesize] {B};
          \fill (2,1) circle (2pt) node [above,font=\footnotesize] {C};
          \fill (3,1) circle (2pt) node [right,font=\footnotesize] {D};
        \end{tikzpicture}
      \end{minipage}
      \begin{minipage}[c][][c]{0.55\linewidth}
        \centering
        \begin{tikzpicture}[scale=0.4]
          \draw[very thick,edgeBlue] (0,0) -- node[circle,inner sep=0.2mm,outer sep=0.2mm,fill=white] {$c$} (2,-2);
          \draw[very thick,edgeOrange] (2,-2) -- node[circle,inner sep=0.2mm,outer sep=0.2mm,fill=white] {$d$} (-2.1,-2) -- (-2.7,-1.4) -- (-2.7,-0.7) -- (-2,0);
          \draw[very thick,edgeGreen] (-2,0) -- node[circle,inner sep=0.2mm,outer sep=0.2mm,fill=white] {$b$} (0,0);
          \draw[very thick,edgeRed] (0,0) -- node[circle,inner sep=0.2mm,outer sep=0.2mm,fill=white] {$a$} (0,2) -- (-1,2) -- (-2,1) -- (-2,0);
          \draw[very thick,edgeYellow] (2,-2) -- node[circle,inner sep=0.2mm,outer sep=0.2mm,fill=white] {$e$} (4,-3);
          \draw[very thick,edgeViolet] (4,-3) -- (6,-5) -- (7,-5) -- (8,-4) -- (8,-3) --  node[circle,inner sep=0mm,outer sep=0mm,fill=white] {$f$} (4,-3);
        \end{tikzpicture}
      \end{minipage}
    \end{center}\vspace{-4mm}
    \caption{An abstract and plane tropical curve of type \toneoneone{}.}
    \label{fig:type111}
  \end{figure}

  \begin{enumerate}[leftmargin=*]
  \item We assume $b+3c\leq d$ and $a>b$. We consider a plane quartic as in Figure~\ref{fig:111_1_1}. More precisely, we take a quartic polynomial $q=\sum_{i,j}a_{ij}x^iy^j$, where
  \begin{itemize}
  \item all $a_{ij}$ except $a_{20},a_{21}$ are of the form $t^{\lambda_{ij}}$, such that the tropical curve of $q$ is as depicted in Figure~\ref{fig:111_1_1}, provided the valuations of $a_{11},a_{22}$ are $0$ and
  \item $a_{20}=1-t^{2e}$,
  \item $a_{21}=2$.
  \end{itemize}
  Then the conditions of Lemma~\ref{prop-unfoldlollis} are fullfilled and the linear re-embedding via $z=y+1+t^{2e}$ unfolds a lolli of lengths $f$ attached to a stick of length $e$.
  \begin{figure}[h]
    \begin{center}
      \begin{minipage}{0.4\linewidth}
        \centering
        \begin{tikzpicture}[scale=1.0]
          \draw (0,0) -- (4,0) -- (0,4) -- (0,0);
          \draw (1,0) -- (1,3);
          \draw
          (2,0) -- (2,2)
          (2,0) -- (1,1)
          (1,1) -- (2,2)
          (0,1) -- (1,0)
          (0,1) -- (1,1)
          (0,2) -- (1,2)
          (0,2) -- (1,1)
          (0,3) -- (1,3)
          (1,2) -- (2,2)
          (0,3) -- (1,2);
          \fill (0,0) circle (2pt);
          \fill (1,0) circle (2pt);
          \fill (2,0) circle (2pt);
          \draw[fill=white] (3,0) circle (2pt);
          \fill (4,0) circle (2pt);
          \fill (0,1) circle (2pt);
          \fill (1,1) circle (2pt);
          \fill (2,1) circle (2pt);
          \draw[fill=white] (3,1) circle (2pt);
          \fill (0,2) circle (2pt);
          \fill (1,2) circle (2pt);
          \fill (2,2) circle (2pt);
          \fill (0,3) circle (2pt);
          \fill (1,3) circle (2pt);
          \fill (0,4) circle (2pt);
        \end{tikzpicture}
      \end{minipage}
      \begin{minipage}{0.5\linewidth}
        \centering
        \begin{tikzpicture}[scale=0.5]
          \draw (2,-2) -- node[above] {$2$} (4,-2)
          (4,-2) -- node[anchor=base east] {$2$} (5,-1)
          (4,-2) -- node[right] {$2$}(4,-3);
          \draw[very thick,edgeBlue] (0,0) -- node[circle,inner sep=0.2mm,outer sep=0.2mm,fill=white] {$c$} (2,-2);
          \draw[very thick,edgeOrange] (2,-2) -- (1,-3) -- node[circle,inner sep=0.2mm,outer sep=0.2mm,fill=white] {$d$} (-4,-3) -- (-4,-2) -- (-2,0);
          \draw[very thick,edgeGreen,yshift=-0.5mm] (-2,0) -- node[circle,inner sep=0.2mm,outer sep=0.2mm,fill=white] {$b$} (0,0);
          \draw (-4,-3) -- (-5,-4)
          (-5,-4) -- (-6,-4)
          (-5,-4) -- (-5,-5)
          (1,-3) -- (1,-5)
          (-4,-2) -- (-6,-2);
          \draw[very thick,edgeRed] (0,0) -- node[circle,inner sep=0.2mm,outer sep=0.2mm,fill=white] {$a$} (0,2) -- (-1,2) -- (-2,1) -- (-2,0);
          \draw (-2,1) -- (-4,1)
          (0,2) -- (1,3)
          (-1,2) -- (-1,3)
          (-1,3) -- (-2,3)
          (-1,3) -- (0,4);
          \fill (2,-2) circle (3pt);
        \end{tikzpicture}
      \end{minipage}
    \end{center}\vspace{-0.25cm}
    \caption{A curve of type \toneoneone{} with $d\geq b+3c$, $a<b$, and a hidden lolli with edge lengths $e,f$.}
    \label{fig:111_1_1}
  \end{figure}
\item If $d\geq b+3c$ and $a=b$, we can further degenerate the picture above. We consider a plane quartic as in Figure~\ref{fig:111_1_1deg}. More precisely, we take a quartic polynomial $q=\sum_{i,j}a_{ij}x^iy^j$, where
  \begin{itemize}
  \item all $a_{ij}$ except $a_{20},a_{21}$ are of the form $t^{\lambda_{ij}}$, such that the tropical curve of $q$ is as depicted in Figure~\ref{fig:111_1_1}, provided the valuations of $a_{11},a_{22}$ are $0$ and
  \item $a_{20}=1-t^{2e}$,
  \item $a_{21}=2$,
  \end{itemize}
  Then the conditions of Lemma~\ref{prop-unfoldlollis} are fullfilled and the linear re-embedding via $z=y+1+t^{2e}$ unfolds a lolli of lengths $f$ attached to a stick of length $e$. Moreover, using \cite[Theorem~3.4]{CM14}, we can unfold an edge of length $a=b$, which forms a cycle with the edge $b$.
    \begin{figure}[h]
    \begin{center}
      \begin{minipage}{0.4\linewidth}
        \centering
        \begin{tikzpicture}[scale=1.0]
          \draw (0,0) -- (4,0) -- (0,4) -- (0,0);
          \draw (1,0) -- (1,3);
          \draw
          (2,0) -- (2,2)
          (2,0) -- (1,1)
          (1,1) -- (2,2)
          (0,1) -- (1,0)
          (0,1) -- (1,1)
          (0,2) -- (1,1)
          (0,3) -- (1,3);
          \fill (0,0) circle (2pt);
          \fill (1,0) circle (2pt);
          \fill (2,0) circle (2pt);
          \draw[fill=white] (3,0) circle (2pt);
          \fill (4,0) circle (2pt);
          \fill (0,1) circle (2pt);
          \fill (1,1) circle (2pt);
          \fill (2,1) circle (2pt);
          \draw[fill=white] (3,1) circle (2pt);
          \fill (0,2) circle (2pt);
          \fill (1,2) circle (2pt);
          \fill (2,2) circle (2pt);
          \fill (0,3) circle (2pt);
          \fill (1,3) circle (2pt);
          \fill (0,4) circle (2pt);
        \end{tikzpicture}
      \end{minipage}
      \begin{minipage}{0.5\linewidth}
        \centering
        \begin{tikzpicture}[scale=0.5]
          \draw[very thick] (2,-2) -- node[above] {$2$} (4,-2)
          (4,-2) -- node[anchor=base east] {$2$} (5,-1)
          (4,-2) -- node[right] {$2$}(4,-3);
          \draw[very thick,edgeBlue] (0,0) -- node[circle,inner sep=0.2mm,outer sep=0.2mm,fill=white] {$c$} (2,-2);
          \draw[very thick,edgeOrange] (2,-2) -- (1,-3) -- node[circle,inner sep=0.2mm,outer sep=0.2mm,fill=white] {$d$} (-4,-3) -- (-4,-2) -- (-2,0);
          \draw[very thick,edgeGreen] (-2,0) -- node[above] {$b$} node[below] {2} (0,0);
          \draw (-4,-3) -- (-5,-4)
          (-5,-4) -- (-6,-4)
          (-5,-4) -- (-5,-5)
          (1,-3) -- (1,-5)
          (-4,-2) -- (-6,-2);
          \draw (-2,1) -- (-4,1)
          (0,0) -- (1,1)
          (-2,0) -- (-2,1)
          (-2,1) -- (-1,2);
          \fill (2,-2) circle (3pt);
        \end{tikzpicture}
      \end{minipage}
    \end{center}\vspace{-0.25cm}
    \caption{A curve of type \toneoneone{} with $d\geq b+3c$, $a<b$, and a hidden lolli with edge lengths $e,f$.asdf}
    \label{fig:111_1_1deg}
  \end{figure}
\item By \cite[Theorem~5.1 (111)]{BJMS14}, $\Gamma$ is embeddable as a tropicalized quartic in $\RR^2$.
\item
  Now assume $c<d\leq b+c$. Picking $b'$ such that $b'+c=d<b'+3c$, we can consider a tropical plane quartic as in Figure~\ref{fig:111_2_1}.
More precisely, we take the quartic polynomial $q=\sum_{i,j}a_{ij}x^iy^j$ where
\begin{itemize}[leftmargin=*]
\item all $a_{ij}$ except $a_{11},a_{01},a_{00}$ are of the form $t^{\lambda_{ij}}$ for suitable valuations $\lambda_{ij}$ such that the tropical curve of $q$ is as depicted in Figure~\ref{fig:111_2_1}, provided the valuations of $a_{00},a_{10},a_{11}$ are $0$, and
\item $a_{11}=1+a_{10}-a_{13}-t^{l_1+l_2}$,
\item $a_{10}=2-3a_{03}+4a_{04}+t^{l_1}$,
\item $a_{00}=1-2a_{03}+3a_{04}+t^{l_1}+t^{b+1}$.
\end{itemize}
Here, $l_1$ and $l_2$ are chosen such that $b=b'+2l_1+l_2$.
Then the conditions of Lemma \ref{lem:unfolde} are satisfied, i.e.\ adding the equation $x=z-1$ reveals a path of edges of total length $b$ replacing the edge of length $b'$. \qedhere
\end{enumerate}
\end{proof}

  \begin{figure}[h]
    \begin{center}
      \begin{minipage}{0.4\linewidth}
        \centering
        \begin{tikzpicture}
          \draw
          (0,0) -- (4,0) -- (0,4) -- (0,0);
          \draw
          (0,3) -- (1,3)
          (0,3) -- (1,2)
          (0,2) -- (1,2)
          (0,0) -- (1,1)
          (1,0) -- (1,3)
          (1,0) -- (2,1)
          (1,2) -- (2,2)
          (1,1) -- (2,2)
          (1,0) -- (2,2)
          (2,0) -- (2,2)
          (2,1) -- (3,1)
          (2,1) -- (3,0)
          (3,0) -- (3,1);
          \fill (0,0) circle (2pt);
          \fill (1,0) circle (2pt);
          \fill (2,0) circle (2pt);
          \fill (3,0) circle (2pt);
          \fill (4,0) circle (2pt);
          \fill (0,1) circle (2pt);
          \fill (1,1) circle (2pt);
          \fill (2,1) circle (2pt);
          \fill (3,1) circle (2pt);
          \fill (0,2) circle (2pt);
          \fill (1,2) circle (2pt);
          \fill (2,2) circle (2pt);
          \fill (0,3) circle (2pt);
          \fill (1,3) circle (2pt);
          \fill (0,4) circle (2pt);
        \end{tikzpicture}
      \end{minipage}
      \begin{minipage}{0.5\linewidth}
        \centering
        \begin{tikzpicture}[scale=0.35]
          \draw[very thick,edgeRed] (0,0) -- (0,2) -- (1,3) -- (2,3) -- node[circle,inner sep=0.2mm,outer sep=0.2mm,fill=white] {$a$} (2,0);
          \draw (1,4) -- (1,3)
          (1,4) -- (0,4)
          (1,4) -- (2,5)
          (0,2) -- (-1,2)
          (2,3) -- (3,4);
          \draw[very thick,edgeGreen] (0,0) -- node[circle,inner sep=-0.5mm,outer sep=-0.5mm,fill=white] {$b'$} (2,0);
          \draw[very thick,edgeOrange] (0,0) -- node[circle,inner sep=0.2mm,outer sep=0.2mm,fill=white] {$d$} (3,-3) -- (5,-3);
          \draw (0,0) -- (-1,0)
          (3,-3) -- (3,-4);
          \draw[very thick,edgeBlue] (5,-3) -- node[circle,inner sep=0.2mm,outer sep=0.2mm,fill=white] {$c$} (2,0);
          \draw[very thick,edgeYellow] (5,-3) -- node[circle,inner sep=0.2mm,outer sep=0.2mm,fill=white] {$e$} (7,-4);
          \draw[very thick,edgeViolet] (7,-4) -- node[circle,inner sep=0.2mm,outer sep=0.2mm,fill=white] {$f$} (15,-4) -- (15,-6) -- (13,-8) -- (11,-8) -- cycle;
          \draw
          (16,-6) -- (17,-5)
          (16,-6) -- (16,-7)
          (16,-6) -- (15,-6)
          (15,-4) -- (16,-3)
          (11,-8) -- (11,-9)
          (13,-8) -- (13,-9);
        \end{tikzpicture}
      \end{minipage}
    \end{center}\vspace{-0.25cm}
    \caption{A curve of type \toneoneone{} with $c<d\leq b+c$.}
    \label{fig:111_2_1}
  \end{figure}

\section{Obstructions for realizably hyperelliptic curves}
\label{sec-hyperelliptic}

In this section, we prove Theorem~\ref{thm-main2}. The idea of the proof is the following: If $C\to \PP^n$ is a degree $4$ morphism from a smooth curve of genus $3$ that admits a faithful tropicalization $\Gamma\hookrightarrow\RR^n$ with $\Gamma$ being maximal, then the realizable sections of the canonical linear system of $\Gamma$ form a very ample linear system (Lemma~\ref{lem-canonical} below). However, Lemma~\ref{lem-nulltwonull}, Lemma~\ref{lem-oneoneone}, and Lemma~\ref{lemtwoonetwo} assert that this is not the case for {\em rh} curves $\Gamma$, which implies Theorem~\ref{thm-main2}. Moreover, for {\em rh} curves $\Gamma$ of types \tnulltwonull{} and \toneoneone{} we show that even the complete canonical linear systems are not very ample.

\begin{lemma}\label{lem-canonical} Let $C\to \PP^n$ be a degree $4$ morphism from a smooth curve of genus $3$ that admits a faithful tropicalization $\Gamma\hookrightarrow\RR^n$. Assume $b_1(\Gamma)>0$, e.g., $\Gamma$ is maximal. Then, $C\to \PP^n$ is an embedding given by a tuple of canonical sections. In particular, the realizable sections of the canonical linear system of $\Gamma$ separate points of $\Gamma$.
\end{lemma}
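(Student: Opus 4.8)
The plan is to identify the line bundle $\mathcal{L}:=\phi^*\mathcal{O}_{\PP^n}(1)$ of degree $4$ and to show that the only way $\phi$ can be compatible with a faithful tropicalization of positive first Betti number is as the canonical embedding of a non-hyperelliptic curve. First I would record the Riemann--Roch dichotomy on the genus $3$ curve $C$: since $\deg(\omega_C\otimes\mathcal{L}^{-1})=0$, a degree-zero bundle has a nonzero section exactly when it is trivial, so
\[
h^0(\mathcal{L})=2+h^0(\omega_C\otimes\mathcal{L}^{-1})=\begin{cases}3,&\mathcal{L}\cong\omega_C,\\ 2,&\mathcal{L}\not\cong\omega_C.\end{cases}
\]
Writing $V\subseteq H^0(C,\mathcal{L})$ for the span of the sections defining $\phi$, I separate the \emph{good} case ($\mathcal{L}\cong\omega_C$, $\dim V=3$, $C$ non-hyperelliptic) from three \emph{bad} cases: (a) $\mathcal{L}\not\cong\omega_C$; (b) $\mathcal{L}\cong\omega_C$ but $\dim V\le 2$; (c) $\mathcal{L}\cong\omega_C$ with $C$ hyperelliptic. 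These are exhaustive. The key observation is that in each bad case $\phi$ factors through a non-constant morphism $\pi\colon C\to\PP^1$: in (a) and (b) because $\dim V\le 2$ forces the moving part of $\phi$ to span a line $\cong\PP^1$ (and $\phi$ is non-constant, as $b_1(\Gamma)>0$ rules out a point image), while in (c) one has $H^0(\omega_C)=\sigma^*H^0(\mathcal{O}_{\PP^1}(2))$ for the hyperelliptic cover $\sigma\colon C\to\PP^1$, so every canonical section, hence $\phi$ itself, factors through $\sigma$.

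The heart of the argument, and the step I expect to be the main obstacle, is to turn ``$\phi$ factors through $\PP^1$'' into a contradiction with faithfulness. The point is that $(\PP^1)^{\an}$ is a contractible $\RR$-tree. Given a factorization $\phi=\iota\circ\pi$ with $\pi\colon C\to\PP^1$ non-constant, the tropicalization decomposes on the skeleton as $h|_{\Gamma_{\overline C}}=(\trop\circ\iota^{\an})\circ(\pi^{\an}|_{\Gamma_{\overline C}})$. Faithfulness asserts that $h|_{\Gamma_{\overline C}}$ is injective, hence so is $\pi^{\an}|_{\Gamma_{\overline C}}$; its image is then a homeomorphic copy of the finite graph $\Gamma_{\overline C}$ sitting inside the $\RR$-tree $(\PP^1)^{\an}$, which forces $b_1(\Gamma_{\overline C})=0$. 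Since $\Gamma=\ft^{\trop}(\Trop(C))=\Gamma_{\overline C}$ and the forgetful map preserves $b_1$, this contradicts $b_1(\Gamma)>0$. Making precise that $\pi^{\an}$ carries $\Gamma_{\overline C}$ into the loop-free analytification of $\PP^1$ compatibly with $h$ is the technical crux; it follows from functoriality of analytification together with the contractibility of $(\PP^1)^{\an}$.

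Having excluded all bad cases, I conclude that $\mathcal{L}\cong\omega_C$, that $C$ is non-hyperelliptic, and that $\dim V=h^0(\omega_C)=3$. Since $\omega_C$ is very ample on a non-hyperelliptic genus $3$ curve, the complete canonical system embeds $C$ as a smooth plane quartic; as $V=H^0(\omega_C)$, the map $\phi$ is this canonical embedding followed by a linear embedding $\PP^2\hookrightarrow\PP^n$. Hence $\phi$ is an embedding given by a tuple of canonical sections, as claimed.

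For the final assertion, I would read the separation of points off the faithful embedding directly. By construction the coordinates of $\Gamma\hookrightarrow\RR^n$ are the piecewise-linear functions $\trop(s_i/s_0)$ arising from the canonical sections $s_0,\dots,s_n$ spanning $H^0(\omega_C)$; these are precisely realizable sections of the tropical canonical linear system $|K_\Gamma|$ in the sense of \cite{MUW}. Faithfulness makes $\Gamma\hookrightarrow\RR^n$ injective, so any two distinct points of $\Gamma$ are distinguished by some coordinate, i.e.\ some tropicalized canonical section takes different values on them. Thus the realizable sections of the canonical linear system of $\Gamma$ separate points of $\Gamma$.
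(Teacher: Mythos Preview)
Your proof is correct and follows essentially the same approach as the paper's: a Riemann--Roch dichotomy showing that either $\phi$ is the canonical embedding or it factors through $\PP^1$, followed by the observation that factoring through $\PP^1$ is incompatible with a faithful tropicalization of positive first Betti number. The paper's proof is a terse two-liner; you have spelled out the case analysis (in particular the hyperelliptic case (c) and the incomplete-linear-system case (b)) and the tree argument via $(\PP^1)^{\an}$ more carefully than the paper does, but the logic is the same.
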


\begin{proof}
By the Riemann-Roch theorem, either the morphism $C\to\PP^n$ factors through $\PP^1$ or it is given by a tuple of sections of $\omega_C$. By our assumptions, the tropicalization $\Gamma\to \RR^n$ is an embedding and $b_1(\Gamma)>0$. Thus, $\Gamma\to \RR^n$ does not factor through the tropicalization of $\PP^1\to \PP^n$, and the assertion follows. In particular, since $\Gamma\hookrightarrow\RR^n$ is faithful, the tropicalizations of sections of $\omega_C$ separate points of $\Gamma$.
\end{proof}

Let $\Gamma$ be a trivalent tropical curve and $\chi\colon\Gamma\to\RR$ a piecewise linear function such that
\begin{equation}\label{eq:canineq}
K_\Gamma+div(\chi)\ge 0.
\end{equation}
The following simple observations will be useful in the proofs below:
\begin{itemize}
\item[(a)] $\chi$ restricted to any edge is convex, and hence achieves no maximum in the inner points of the edge unless it is constant on this edge;
\item[(b)] for any vertex where $\chi$ achieves its global maximum $M_\chi$, the slopes of $\chi$ vanish along at least two of the attached edges due to condition \eqref{eq:canineq}. Moreover, $\chi$ must be constant on any such edge by convexity.
\end{itemize}
For a piecewise linear function $\chi\colon\Gamma\to\RR$, we denote the slope of $\chi$ at a vertex $V$ along an attached edge $E$ by $\frac{\partial\chi}{\partial E}(V)$.

\begin{lemma}\label{lem-nulltwonull} The canonical divisor of an abstract tropical curve $\Gamma$ of type \tnulltwonull{}, where the edges forming the $2$-cut have equal lengths, does not separate points of $\Gamma$. 
\end{lemma}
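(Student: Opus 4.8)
The plan is to exploit the extra symmetry forced by the equal-length condition on the $2$-cut. Label the curve as in Figure~\ref{fig:type020}: the two ``bigons'' are $\{a,b\}$ joining $A,B$ and $\{e,f\}$ joining $C,D$, while the $2$-cut consists of the edge $c$ (from $A$ to $C$) and the edge $d$ (from $B$ to $D$). Assuming $c=d$, I would first write down the isometric involution $\iota\colon\Gamma\to\Gamma$ that swaps $A\leftrightarrow B$ and $C\leftrightarrow D$, interchanges the two cut edges $c\leftrightarrow d$ (this is exactly where $c=d$ is used), and reverses each bigon arc $a,b,e,f$ about its midpoint. One checks $\iota^2=\id$ and that the quotient $\Gamma/\iota$ is a tree, so the projection $\pi\colon\Gamma\to\Gamma/\iota$ is a nondegenerate degree $2$ harmonic morphism; by the criterion of \cite{Chan13} recalled above, $\Gamma$ is hyperelliptic. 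In particular $\Gamma$ carries a divisor $D_0$ of degree $2$ and rank $1$ (a tropical $g^1_2$), namely any fibre $\pi^*(\bar x)$, and all such fibres are linearly equivalent.

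The key point is then that $\iota$ fixes the canonical class: since $K_\Gamma=A+B+C+D$ and $\iota$ permutes these four vertices, $\iota^*K_\Gamma=K_\Gamma$. I would pick a point $p$ in the interior of $c$, which is disjoint from the ramification locus (the fixed points of $\iota$ are the midpoints of $a,b,e,f$), and set $q:=\iota(p)$ in the interior of $d$; then $p\neq q$ and $p+q=\pi^*(\pi(p))\sim D_0$, so $r(p+q)=r(D_0)=1$. Applying the tropical Riemann--Roch theorem to $K_\Gamma-p-q$ gives $r(K_\Gamma-p-q)=r(p+q)+\big(\deg(K_\Gamma-p-q)-g+1\big)=1+(2-3+1)=1$. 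Since $r(K_\Gamma)=g-1=2$, we obtain $r(K_\Gamma-p-q)=1>0=r(K_\Gamma)-2$, which is precisely the statement that the complete canonical linear system fails to separate $p$ and $q$. Geometrically this just reflects that the canonical map of $\Gamma$ factors through the degree $2$ morphism $\pi$, hence identifies every point with its $\iota$-conjugate.

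For a self-contained argument matching observations (a) and (b) above, I would instead prove directly that every piecewise linear $\chi$ with $K_\Gamma+\divisor(\chi)\ge 0$ is $\iota$-invariant, avoiding Riemann--Roch: once every effective canonical divisor $D=K_\Gamma+\divisor(\chi)$ is known to be $\iota$-invariant, comparing $\divisor(\chi\circ\iota-\chi)=\iota^*D-D=0$ and applying $\iota$ a second time forces $\chi\circ\iota=\chi$, whence $\chi(p)=\chi(\iota(p))=\chi(q)$ for all $\chi$ and the pair $p,q$ cannot be separated. To establish this invariance I would start from the global maximum of $\chi$: observation (b) forces $\chi$ to be constant along at least two of the edges at the maximizing vertex, and observation (a) propagates constancy to an $\iota$-symmetric ``plateau''; one then follows $\chi$ down the two sides of the $2$-cut, using convexity and the slope budget $\sum_{E\ni V}\frac{\partial\chi}{\partial E}(V)\ge -1$ at each trivalent vertex (cf.\ \eqref{eq:canineq}), to see that the descent is symmetric. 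I expect the main obstacle to be exactly this combinatorial case analysis: one must run through the finitely many positions of the maximal plateau and check at every trivalent vertex that the slope budget $-1$ cannot be spent so as to break the symmetry between $c$ and $d$ --- and it is here that the hypothesis $c=d$ is indispensable, since any length defect would let the function tilt asymmetrically across the cut and would produce canonical divisors separating the two sides.
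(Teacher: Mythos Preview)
Your Riemann--Roch argument has a genuine gap: the implication ``$r(K_\Gamma-p-q)=r(K_\Gamma)-1$, hence $|K_\Gamma|$ fails to separate $p$ and $q$'' is valid for algebraic curves but \textit{false} for tropical ones. The paper itself supplies the counterexample in Remark~\ref{rem-twoonetwo}: a curve of type \ttwoonetwo{} with equal banana lengths $c=d$ is tropically hyperelliptic, so the identical computation yields $r(K_\Gamma-P-Q)=1$ for $\iota$-conjugate points $P,Q$ on the banana edges; yet the canonical divisor is very ample there, and the explicit section in Figure~\ref{fig:functionon212} separates $P$ from $Q$. The tropical linear system $R(K_\Gamma)$ is a tropical semimodule, not a linear space, and its point-separating power is not governed by the Baker--Norine rank in the way you assume. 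Your heuristic ``the canonical map factors through $\pi$'' is precisely what breaks down for type \ttwoonetwo{}, so it cannot be taken for granted for type \tnulltwonull{} either.

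Your second approach is the right one and matches the paper's in spirit, but the paper proves less than full $\iota$-invariance: it shows only that every section $\chi$ satisfies $\chi(A)=\chi(B)$ and $\chi(C)=\chi(D)$, which already exhibits non-separated pairs and suffices for the lemma. The case analysis you anticipate is short: assuming $\chi(A)=M_\chi$, observation~(b) gives $\chi(B)=\chi(A)$; if $\chi$ is constant on $E_c$ or $E_d$ one is done, otherwise the outgoing slopes at $A,B$ along the cut are both $-1$, and if $\chi(C)>\chi(D)$ then~\eqref{eq:canineq} at $C$ forces $\chi|_{E_c}$ to be linear of slope $-1$, whence convexity on $E_d$ together with $c=d$ give $\chi(D)\ge\chi(C)$, a contradiction. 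Aiming for $\iota$-invariance of the full divisor $K_\Gamma+\divisor(\chi)$ is more than needed, and the \ttwoonetwo{} example shows one should not expect a uniform argument across all hyperelliptic types.
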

\begin{proof}
Denote the vertices of $\Gamma$ by $A,B,C,D$ and the edges by $E_a,E_b,E_c,E_d,E_e,E_f$ as in Figure~\ref{fig:type020}. The length of $E_i$ is $i$, and we have $c=d$.
Let $\chi\colon\Gamma\to\RR$ be a piecewise linear function satisfying \eqref{eq:canineq}. We shall show that $\chi(A)=\chi(B)$ and $\chi(C)=\chi(D)$.

Assume without loss of generality that $\chi(A)=M_\chi$. Then $\chi(B)=\chi(A)$ by (b). If $\chi$ is constant on either $E_c$ or $E_d$ then by the same argument $\chi(C)=M_\chi=\chi(D)$. Otherwise $\frac{\partial\chi}{\partial E_c}(A)=\frac{\partial\chi}{\partial E_d}(B)=-1$ by (b) and \eqref{eq:canineq}. Assume to the contrary that $\chi(C)\ne\chi(D)$, say $\chi(C)>\chi(D)$. By (a), $\frac{\partial\chi}{\partial E_e}(C), \frac{\partial\chi}{\partial E_f}(C)<0$, and hence $\frac{\partial\chi}{\partial E_c}(C)\ge 1$ by condition \eqref{eq:canineq}. But $\chi$ is convex on $E_c$ and hence linear. Thus, by convexity of $\chi$ on $E_d$, we obtain $\chi(D)\ge \chi(B)-d=\chi(A)-c=\chi(C)$, which is a contradiction.
\end{proof}

\begin{lemma}\label{lem-oneoneone} The canonical divisor of an abstract tropical curve $\Gamma$ of type \toneoneone{}, where the edges forming the $2$-cut have equal lengths, does not separate points of $\Gamma$.
\end{lemma}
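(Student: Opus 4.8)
The plan is to mirror the proof of Lemma~\ref{lem-nulltwonull}. Denote the vertices by $A,B,C,D$ and the edges by $E_a,E_b,E_c,E_d,E_e,E_f$ as in Figure~\ref{fig:type111}, so that $E_a,E_b$ form the bigon joining $A$ and $B$, the two edges $E_c,E_d$ of the $2$-cut join $A$ and $B$ to $C$ with $c=d$, the edge $E_e$ joins $C$ to $D$, and $E_f$ is the loop at $D$. All four vertices are trivalent of genus $0$, so the canonical divisor is $K_\Gamma=A+B+C+D$. I would prove that \emph{every} piecewise linear $\chi$ with $K_\Gamma+\divisor(\chi)\ge 0$ satisfies $\chi(A)=\chi(B)$; since $A\ne B$, this shows that all sections of the canonical linear system take equal values at the two distinct points $A,B$, hence the canonical divisor does not separate them. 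The argument will use only convexity along edges (observation (a)), condition \eqref{eq:canineq} at the vertices, and the integrality of the slopes of $\chi$; unlike in Lemma~\ref{lem-nulltwonull} I do not expect to need observation (b), as I would argue by contradiction rather than by placing the global maximum.

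Assume to the contrary that $\chi(A)>\chi(B)$ (the case $\chi(A)<\chi(B)$ is symmetric under the isometry $A\leftrightarrow B$, $E_c\leftrightarrow E_d$ of $\Gamma$, which exists because $c=d$). Since $\chi$ is convex and decreasing from $A$ to $B$ on each of $E_a,E_b$, its outgoing slopes at $A$ along $E_a$ and $E_b$ are strictly negative, hence at most $-1$ by integrality; then \eqref{eq:canineq} at $A$ forces $\frac{\partial\chi}{\partial E_c}(A)\ge 1$. Pushing this across the $2$-cut, convexity on $E_c$ gives $\frac{\partial\chi}{\partial E_c}(C)\le -1$ and $\chi(C)\ge\chi(A)+c>\chi(B)$, so $\chi$ decreases from $C$ to $B$ along $E_d$ and $\frac{\partial\chi}{\partial E_d}(C)\le -1$; now \eqref{eq:canineq} at $C$ forces $\frac{\partial\chi}{\partial E_e}(C)\ge 1$, i.e. $\chi$ climbs from $C$ towards $D$.

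The crux, and the one genuinely new feature compared with type \tnulltwonull{} (where the far side of the $2$-cut was again a bigon), is the loop $E_f$ at $D$. Convexity on $E_e$ gives $\frac{\partial\chi}{\partial E_e}(D)\le -1$, and I would use that $\chi$ restricted to the loop is convex with equal values at its two ends (both equal to $\chi(D)$): this makes both outgoing slopes of $\chi$ into $E_f$ at $D$ nonpositive, while \eqref{eq:canineq} at $D$ makes their sum at least $-1-\frac{\partial\chi}{\partial E_e}(D)\ge 0$. Hence both loop slopes vanish and $\frac{\partial\chi}{\partial E_e}(D)=-1$. Convexity then pins the slope of $\chi$ along $E_e$ to be exactly $1$, so $\frac{\partial\chi}{\partial E_e}(C)=1$; feeding this back into \eqref{eq:canineq} at $C$ forces $\frac{\partial\chi}{\partial E_c}(C)=\frac{\partial\chi}{\partial E_d}(C)=-1$. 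The first equality upgrades the slope along $E_c$ to exactly $1$, giving $\chi(C)=\chi(A)+c$, while the second together with convexity on $E_d$ yields $\chi(B)\ge\chi(C)-d=\chi(A)+c-d=\chi(A)$, using $c=d$. This contradicts $\chi(A)>\chi(B)$, and therefore $\chi(A)=\chi(B)$.

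The main obstacle is precisely the loop vertex $D$: the chain of slope estimates that in Lemma~\ref{lem-nulltwonull} was closed off by a second bigon must instead be closed off by the self-loop, and the key step requiring care is the convexity-on-a-loop computation that forces both of the loop's outgoing slopes to vanish. Once that pins $\frac{\partial\chi}{\partial E_e}(D)=-1$, the remaining inequalities propagate back to $C$ and $E_d$ formally and parallel the type \tnulltwonull{} argument. As there, integrality of the slopes is used repeatedly to sharpen ``strictly negative'' into ``at most $-1$'', which is what makes the inequalities at the vertices rigid enough to conclude.
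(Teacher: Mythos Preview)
Your proof is correct and follows essentially the same approach as the paper's. The only organizational difference is that the paper establishes the loop constraint $\frac{\partial\chi}{\partial E_e}(C)\le 1$ upfront (before the contradiction hypothesis) and uses it directly at $C$, whereas you first push to $D$ inside the contradiction argument to pin down $\frac{\partial\chi}{\partial E_e}(D)=-1$ exactly and then return to $C$; the resulting chain of equalities and the final comparison $\chi(B)\ge\chi(C)-d=\chi(C)-c$ via convexity on the other cut edge are identical in both arguments.
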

\begin{proof} The argument is similar to the previous proof. Let $\Gamma$ have vertices $A$, $B$, $C$ and $D$ and edges $E_i$ of length $i$ for $i=a,\ldots,f$ as in Figure~\ref{fig:type111}.

Let $\chi\colon\Gamma\to\RR$ be a piecewise linear function satisfying \eqref{eq:canineq}. We shall show that $\chi(A)=\chi(B)$.
First, notice that by convexity, the slopes of $\chi$ along the loop at $D$ are non-positive, and hence $\frac{\partial\chi}{\partial E_e}(D)\ge -1$. Thus, again by convexity, $\frac{\partial\chi}{\partial E_e}(C)\le 1$. Second, assume to the contrary that $\chi(A)\ne\chi(B)$, say $\chi(A)<\chi(B)$. Then $\frac{\partial\chi}{\partial E_a}(B),\frac{\partial\chi}{\partial E_b}(B)<0$ by (a), and hence $\frac{\partial\chi}{\partial E_d}(B)\ge 1$. By convexity this implies $\chi(C)>\chi(B)>\chi(A)$. Thus, again by convexity, $\frac{\partial\chi}{\partial E_d}(C),\frac{\partial\chi}{\partial E_c}(C)\le -1$, and hence $\frac{\partial\chi}{\partial E_d}(C)=\frac{\partial\chi}{\partial E_c}(C)=-1$ since the sum of the three slopes at $C$ must be at least $-1$. We conclude that $\chi$ is linear along $E_d$. Finally, $\chi(B)=\chi(C)-d=\chi(C)-c\le\chi(A)$ by convexity of $\chi$ restricted to $E_c$, which is a contradiction.
\end{proof}


\begin{remark}\label{rem-twoonetwo} For $\Gamma$ of type \ttwoonetwo{}, the canonical divisor is very ample, even if the lengths of the banana edges are equal. It is clear that we can separate points which are not on the banana edges, of the same distance to the vertices.
For such two points, we can use functions as depicted in Figure~\ref{fig:functionon212}.

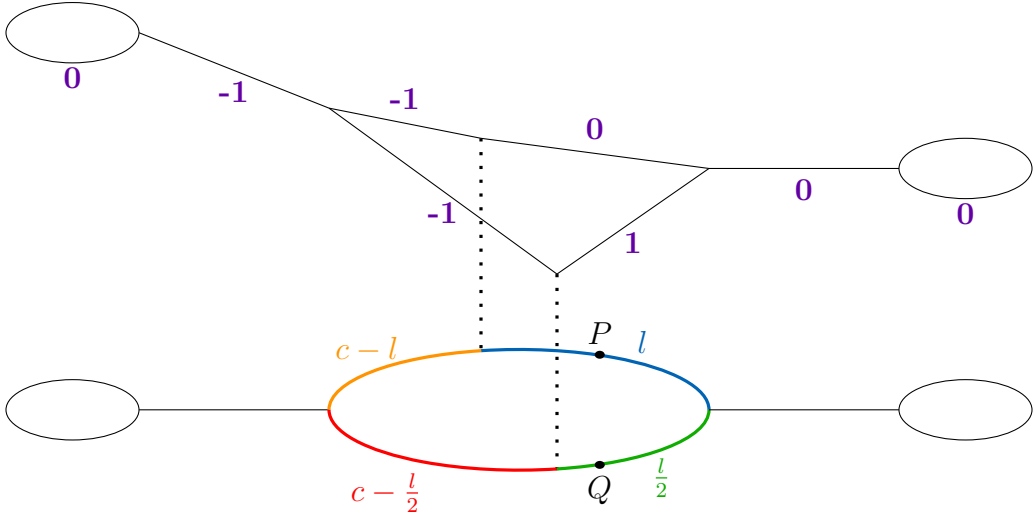
\begin{figure}[ht]
  \begin{center}
    \begin{tikzpicture}[xscale=2.5,yscale=2]
      \draw (-2.35,2.5) ellipse (0.35 and 0.2);
      \node[edgeViolet] at (-2.35,2.2) {\textbf{0}};
      \draw (-2,2.5) -- node[below,edgeViolet] {\textbf{-1}} (-1,2);
      \draw (-1,2) -- node[above,edgeViolet,yshift=0.5mm] {\textbf{-1}} (-0.2,1.8);
      \draw (-0.2,1.8) -- node[above,edgeViolet,yshift=0.5mm] {\textbf{0}} (1,1.6);
      \draw (-1,2) -- node[below,edgeViolet] {\textbf{-1}} (0.2,0.9);
      \draw (0.2,0.9) -- node[below,edgeViolet] {\textbf{1}} (1,1.6);
      \draw (1,1.6) -- node[below,edgeViolet] {\textbf{0}} (2,1.6);
      \draw (2.35,1.6) ellipse (0.35 and 0.2);
      \node[edgeViolet] at (2.35,1.3) {\textbf{0}};

      \draw[loosely dotted, very thick] (-0.2,1.8) -- (-0.2,0.4);
      \draw[loosely dotted, very thick] (0.2,0.9) -- (0.2,-0.4);

      \draw (-2.35,0) ellipse (0.35 and 0.2);
      \draw (-2,0) -- (-1,0);
      \draw[very thick,edgeBlue] (1,0) arc (0:101.5:1 and 0.4) node (A) {};
      \node[edgeBlue] at (0.65,0.45) {$l$};
      \draw[very thick,edgeOrange] (A.center) arc (101.5:180:1 and 0.4);
      \node[edgeOrange] at (-0.8,0.4) {$c-l$};
      \fill (0.425,0.365) circle (0.75pt);
      \node[above] at (0.425,0.365) {$P$};
      \draw[very thick,edgeRed] (-1,0) arc (180:281.5:1 and 0.4) node (B) {};
      \node[edgeRed] at (-0.7,-0.55) {$c-\frac{l}{2}$};
      \draw[very thick,edgeGreen] (B.center) arc (281.5:360:1 and 0.4);
      \node[edgeGreen] at (0.75,-0.45) {$\frac{l}{2}$};
      \fill (0.425,-0.365) circle (0.75pt);
      \node[below] at (0.425,-0.365) {$Q$};
      \draw (1,0) -- (2,0);
      \draw (2.35,0) ellipse (0.35 and 0.2);

    \end{tikzpicture}\vspace{-0.5cm}
  \end{center}
  \caption{A function in the linear system of the canonical divisor, separating two points $P$ and $Q$. The numbers above indicate edge slopes while the numbers below indicate edge lengths.}
  \label{fig:functionon212}
\end{figure}

\end{remark}

Notice that \cite[Theorem~10]{Amini} classifies all graphs with a not very ample canonical divisor, but this result seems to contain a small gap. For that reason, we include the arguments used in genus $3$ here.
%

\begin{lemma}\label{lemtwoonetwo} Let $\Gamma$ be a tropical curve of type \ttwoonetwo{} such that the banana edges have identical lengths. Then any realizable canonical divisor $K_\Gamma+div(\chi)$ satisfies $\chi|_{E_c}=\chi|_{E_d}$. In particular, points on the banana edges cannot be separated with realizable sections.
\end{lemma}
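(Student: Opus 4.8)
The plan is to prove that realizability is incompatible with any asymmetry between the two banana edges $E_c$ and $E_d$. Since $c=d$, the map identifying $E_c$ with $E_d$ (the restriction of the tropical hyperelliptic involution $\iota$ swapping the two banana edges) is an isometry, and both $\chi|_{E_c}$ and $\chi|_{E_d}$ are convex by observation (a), with the same endpoint values $\chi(B),\chi(C)$. Hence if both are affine they coincide automatically, and $\chi|_{E_c}\ne\chi|_{E_d}$ can happen only if $K_\Gamma+\divisor(\chi)$ has support in the interior of one banana edge that is not mirrored on the other. The moral reason such asymmetry cannot be realized is that a realizable canonical section ought to be $\iota$-invariant, i.e.\ a pull-back along the degree-$2$ map to the tree $\Gamma/\iota$. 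Note, however, that we cannot deduce this from a hyperelliptic structure on the realizing curve $C$: in the intended application (Theorem~\ref{thm-main2}) $C$ is canonically embedded and hence \emph{non}-hyperelliptic, while $\Gamma$ is only tropically hyperelliptic. The symmetry must therefore be extracted intrinsically from the realizability criterion.

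First I would use observations (a), (b) and the canonical inequality \eqref{eq:canineq} to normalize $\chi$: locating the global maximum $M_\chi$, propagating the fact that at least two edges at a maximal vertex are flat, and using convexity on every edge including the loops $E_a,E_f$, one constrains the level structure of $\chi$ on $\Gamma$. This step alone does not finish the proof: as Remark~\ref{rem-twoonetwo} shows, the \emph{full} canonical linear system does separate points on the banana by means of piecewise linear functions with asymmetric valleys, so the combinatorial constraints must be supplemented by realizability.

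The realizability input is the description of realizable sections of the tropical canonical divisor in \cite{MUW}. A realizable $\chi$ underlies a \emph{twisted differential} $\{\eta_v\}$ on the stable reduction of $C$, all of whose components $\overline{C}_v$ are rational (the maximal case), with the order of $\eta_v$ at the branch of a node along an edge $E$ equal to $-1-\frac{\partial\chi}{\partial E}(v)$, subject to residue compatibility at each node and to the global residue condition. The symmetry of $\chi$ on the banana is then forced by the residue theorem on the rational components together with the global residue condition around the banana cycle $E_c\cup E_d$. The obstruction is most transparent on a loop component: on $\overline{C}_A$ the self-node coming from $E_a$ contributes two simple poles whose residues cancel, so the residue theorem on $\PP^1$ forces the residue at the bridge node $N_b$ to vanish; but a bridge on which $\chi$ is constant produces a genuine simple pole there, whose residue cannot vanish -- a contradiction. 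Exactly this configuration is what the asymmetric normal form produces on one side, and the analogous residue condition across the two nodes $N_c,N_d$ is what rules out the remaining tilted configurations, yielding $\chi|_{E_c}=\chi|_{E_d}$. Since points of $E_c$ and $E_d$ at equal distance from $B$ then receive equal values of every realizable $\chi$, such points cannot be separated.

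The main obstacle I expect is not conceptual but organizational: one must run the normalization of the first step through every position of $M_\chi$ and every admissible choice of the two flat edges at each maximal vertex, distinguishing the cases $\chi(B)=\chi(C)$ and $\chi(B)\ne\chi(C)$, and verify in each that the global residue condition of \cite{MUW} -- instantiated either at a loop component or across the pair of banana nodes -- forbids the asymmetry. A secondary technical point is that interior breakpoints of $\chi$ must first be turned into genus-$0$ vertices by refining the model before the twisted-differential and residue criterion can be applied.
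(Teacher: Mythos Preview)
Your plan correctly identifies that realizability via \cite{MUW} is the essential extra input beyond the canonical inequality, and that the purely tropical linear system is too large (Remark~\ref{rem-twoonetwo}). But the concrete obstruction you isolate does not fire. Carry out your own normalization: if $\chi(B)\ge\chi(C)$ and neither banana slope at $B$ vanishes, then (as in the paper's Lemma on type \toneoneone{}) both equal $-1$, so by \eqref{eq:canineq} the slope along $E_b$ at $B$ is $\ge 1$; convexity on $E_b$ then forces the slope at $A$ toward $B$ to be $\le -1$, and \eqref{eq:canineq} at $A$ together with non-positivity of the loop slopes pins all three: the loop slopes are $0$ and the bridge slope at $A$ is exactly $-1$. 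Thus the order of the twisted differential at the bridge node on $\overline{C}_A$ is $-1-(-1)=0$, \emph{not} a simple pole, and your ``residue must vanish but cannot'' argument on the loop component evaporates. The prototypical asymmetric section of Figure~\ref{fig:functionon212} sits precisely in this regime.

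So the real obstruction lives on the banana itself, which you only gesture at. The paper's proof localizes there from the start: parametrize $E_c\simeq[0,c]$, $E_d\simeq[0,d]$ by the break points $0=t_0\le t_1\le t_2\le t_3\le t_4=c$ (resp.\ $s_i$) where the slope jumps from $i-2$ to $i-1$; matching endpoint values and $c=d$ give $t_1+t_2+t_3=s_1+s_2+s_3$. One then applies condition~(ii) of \cite[Theorem~6.3]{MUW} directly at these interior break points---not at the loop vertices---to force $t_3=s_3$, then $t_1=s_1$, hence $t_2=s_2$. This is a short combinatorial check rather than a full case split over positions of $M_\chi$, and it is where the global residue condition actually bites. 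Your framework is compatible with this, but you would need to relocate the residue argument from the loops to the banana break points to close the gap.
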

\begin{proof}
Without loss of generality $\chi(B)\ge\chi(C)$. Thus, $\frac{\partial\chi}{\partial E_c}(B), \frac{\partial\chi}{\partial E_d}(B)\le 0$ by convexity. If one of the slopes vanishes then $\chi$ is constant on both edges by convexity and condition (ii) in \cite[Theorem~6.3]{MUW}. Assume now that both slopes are negative. An argument identical to the one used in the proof of Lemma~\ref{lem-oneoneone}, shows that $\frac{\partial\chi}{\partial E_c}(B)+\frac{\partial\chi}{\partial E_d}(B)\ge -2$, and hence $\frac{\partial\chi}{\partial E_c}(B)=\frac{\partial\chi}{\partial E_d}(B)=-1$. Similarly,
\begin{equation}\label{eq:212ineq}
\frac{\partial\chi}{\partial E_c}(C)+\frac{\partial\chi}{\partial E_d}(C)\ge -2.
\end{equation}
Let us identify $E_c\simeq [0,c]$ and $E_d\simeq [0,d]$ such that $B$ is identified with $0$. Then there exist $0=t_0\le t_1\le t_2\le t_3\le t_4=c$ and $0=s_0\le s_1\le s_2\le s_3\le s_4=d$ such that $\frac{\partial\chi}{\partial x}|_{(t_i,t_{i+1})}=\frac{\partial\chi}{\partial x}|_{(s_i,s_{i+1})}=i-1$. Thus, $\chi(C)=2t_4-t_1-t_2-t_3=2s_4-s_1-s_2-s_3$, and since $c=d$, we obtain
\begin{equation}\label{eq:212eq}
t_1+t_2+t_3=s_1+s_2+s_3.
\end{equation}
By the symmetry of $\Gamma$ we may assume that $t_3\le s_3$. If $t_3<s_3$ then $\frac{\partial\chi}{\partial E_c}(C)=-2$ and $\frac{\partial\chi}{\partial E_d}(C)\ge 0$ by \eqref{eq:212ineq}. This implies that $s_2=s_3=s_4=d$, and hence $t_1>s_1$ by \eqref{eq:212eq}. But this contradicts condition (ii) in \cite[Theorem~6.3]{MUW}. Thus, $t_3=s_3$.

Similarly we may assume that $t_1\le s_1$. If $t_1<s_1$ then $t_2>s_2\ge s_1>t_1$ by \eqref{eq:212eq}, which again contradicts condition (ii) in \cite[Theorem~6.3]{MUW}. Thus, $t_1=s_1$, and hence also $t_2=s_2$ by \eqref{eq:212eq}, which completes the proof.
\end{proof}

\begin{proof}[Proof of Theorem~\ref{thm-main2}]
Let $\Gamma$ be a maximal {\em rh} tropical curve of genus $3$. Then, as described in Section~\ref{subsec:moduliandcurves}, $\Gamma$ is of type \tnulltwonull{}, \toneoneone{} or \ttwoonetwo{}, where in each case the edges forming the $2$-cut have equal lengths. In each of the three possible cases the linear system of realizable sections does not separate points on $\Gamma$ by one of the Lemmas~\ref{lem-nulltwonull},~\ref{lem-oneoneone},~\ref{lemtwoonetwo}. Thus, the assertion of Theorem~\ref{thm-main2} follows from Lemma~\ref{lem-canonical}.
\end{proof}




\bibliographystyle{plain}

\end{document}